\theoremstyle{plain} 
\newtheorem{thm}{Theorem}[section] 
\newtheorem{cor}[thm]{Corollary} 
\newtheorem{lem}[thm]{Lemma} 
\newtheorem{prop}[thm]{Proposition} 
\theoremstyle{definition} 
\newtheorem{defn}{Definition}[section] 
\newtheorem{oss}{Remark}
\newtheorem{ex}{Example}
\DeclareSymbolFont{symbols2}{LS1}{stixfrak} {m} {n}
\DeclareMathSymbol{\operp}{\mathbin}{symbols2}{"A8}
\title{{\bf  Almansi-type decomposition for slice regular functions of several quaternionic variables }}
\author{ Giulio Binosi\footnote{ORCID: \texttt{0000-0002-4733-6180}} \\
\small Dipartimento di Matematica, Universit\`a di Trento\\ 
\small Via Sommarive 14, I-38123 Povo Trento, Italy\\
\small giulio.binosi@unitn.it
}
\date{ {\small
\textit{
2020 MSC: Primary 30G35; Secondary 30E20; 32A30.
Key words and phrases. Slice-regular functions, Almansi decomposition, Quaternions, Monogenic functions, Cauchy-Riemann operator.}} }
\begin{document}

\maketitle

\begin{abstract}
In this paper we propose an Almansi-type decomposition for slice regular functions of several quaternionic variables. Our method yields $2^n$ distinct and unique decompositions for any slice function with domain in $\mathbb{H}^n$. 
Depending on the choice of the decomposition, every component is given explicitly, uniquely determined and exhibits desirable properties, such as harmonicity and circularity in the selected variables. As consequences of these decompositions, we give another proof of Fueter's Theorem in $\mathbb{H}^n$, establish the biharmonicity of slice regular functions in every variable and derive mean value and Poisson formulas for them.
\end{abstract}

\section{Introduction}
In 1899, Emilio Almansi \cite{AlmansiClassico} proved that any polyharmonic function $f$ of degree $m$, defined on a star-like domain centered at the origin of $\mathbb{R}^n$ could be written as a combination of $m$ harmonic functions $\{\mathcal{S}_i(f)\}_{i=0}^{m-1}$ as
\begin{equation*}
    f(x)=\mathcal{S}_0(f)(x)+|x|^2\mathcal{S}_1(f)(x)+\dots+|x|^{2(m-1)}\mathcal{S}_{m-1}(f)(x).
\end{equation*}
This theorem plays a central role in the theory of polyharmonic functions, establishing a bridge between the theories of harmonic and polyharmonic functions. We refer to the monograph \cite{PolyharmonicFunctions} and the references therein for applications of the theorem in the classical case of several real or complex variables.

Generalizations of Almansi decomposition have been studied both concerning other type of iterated differential operators (e.g. in \cite{AlmansiDunkl} for the Dunkl Laplacian), both for more general classes of functions, especially in the hypercomplex settings of slice regular \cite{Almansi,AlmansiPerottiClifford} and monogenic functions \cite{AlmansiPolymonogenic}. But, as far as we know, no Almansi decomposition has been provided for functions of several hypercomplex variables.

Our starting point is the work of A. Perotti \cite{Almansi}, in which an Almansi-type decomposition holds for slice regular functions of one quaternionic variable:
\begin{thm}\cite[Theorem 4]{Almansi}
\label{teorema intro perotti}
    Let $f$ be a slice regular function defined on a circular set $\Omega\subset\mathbb{H}$, then there exist two unique, circular and harmonic functions $h_1,h_2$ such that
    \begin{equation*}
        f(x)=h_1(x)-\overline{x}h_2(x),\qquad\forall x\in\Omega.
    \end{equation*}
    The functions are given by $h_1=(xf)'_s=-\overline{\partial}_{CRF}(xf)$ and $h_2=f'_s=-\overline{\partial}_{CRF}(f)$,
    where $g'_s(x):=[2\operatorname{Im}(x)]^{-1}(g(x)-g(\overline{x}))$ is the spherical derivative of a slice function $g$, which, up to a factor, agrees on slice regular functions with the Cauchy-Riemann-Fueter operator $\overline{\partial}_{CRF}=\frac{1}{2}(\partial_{\alpha}+i\partial_\beta+j\partial_\gamma+k\partial_\delta)$.
\end{thm}
The aim of the present paper is to extend the previous Almansi-type decomposition of slice regular functions to the context of several variables.
The extension to higher dimensions poses some new challenges, one of which is the exponential growth of all possible decompositions. Indeed, for slice functions of $n$ variables, we obtain $2^n$ decompositions (Theorem \ref{Teorema principale}), as the cardinality of all possible choices of variables between $x_1,...,x_n$.
Every component is radially symmetric in the imaginary part of the chosen variables that determine the decomposition; if, moreover, the decomposing function is slice regular they are harmonic in the same variables, too.
Every component of each decomposition is given explicitly and it is completely determined by the original function through its partial spherical derivatives, as in Theorem \ref{teorema intro perotti}.
We also prove the unique character of these decompositions (Proposition \ref{Proposizione unicità decomposizione}), namely the functions performing the decomposition are unique, if specific symmetry properties are required.

Among these, we point out the class of ordered decompositions, corresponding to integers intervals of the form $\{1,2,...,m\}$ (Corollary \ref{Corollario Almansi ordinato}). 
Such special components are sufficient for characterizing the slice regularity of the slice function they decompose (Proposition \ref{Proposizione caratterizzazione slice regolarita con componenti ordinate}), in analogy with the one variable interpretation of slice regularity proposed in \cite[\S 3.4]{Several}. In this case they can also be given applying iteratively the Cauchy-Riemann-Fueter operator $\overline{\partial}_{x_h}$, instead of the partial spherical derivatives. The components of the ordered decompositions have already been exploited to define strongly slice regular functions of several variables, which are a generalization of slice regular functions defined on a not necessarily axially symmetric domain \cite{PerottiWirtinger}.

We describe the structure of the paper. In Section 2 we recall from \cite{Parteteorica} and \cite{Several} the main features of slice regular functions of several quaternionic variables and some properties of partial spherical values and derivatives. Section 3 is divided into three parts: in the first one we state the main results described above, whose proofs are postponed to \S 3.3, while in the second one we show some preliminary results.

Last section is devoted to applications of Almansi-type decomposition. First, we apply Theorem \ref{Teorema principale} to slice regular polynomials, where the components are given through zonal harmonics, namely, rotational invariant harmonic functions. Then, by the harmonicity of the components of Almansi-type decompositions, we give a different proof of the several variables version of Fueter's Theorem (\cite[Theorem 4.10]{Parteteorica}) and prove the biharmonicity of any slice regular function w.r.t. each variable (Corollary \ref{Corollario biarmonicita}), justifying a posteriori the existence of an Almansi-type decomposition. Moreover, we are able to give examples of monogenic functions of several variables, in the spirit of Fueter's Theorem (Proposition \ref{proposizione componenti ordinate monogeniche}) generated from the components of Almansi-type decomposition. Finally, we derive various formulas (\S \ref{Sezione integrali}), such as mean value and Poisson formulas for slice regular functions.

\section{Preliminaries}
The study of quaternionic analysis has led to the development of various classes of functions that generalize complex analysis to higher dimensions. Among them, the notion of slice regularity has received particular attention in the last years. 
Slice analysis was firstly introduced by Gentili and Struppa in \cite{GentiliStruppa}, where they defined a new class of regular quaternionic functions as real differentiable functions which are holomorphic in every complex slice of $\mathbb{H}$. They initially referred to that class as C-regular functions, in honor of Cullen \cite{Cullen}, who previously conceived that definition. We refer the reader to \cite{LibroCaterina},\cite{NoncommutativeFunctionalCalculus}, \cite{entiresliceregularfunctions} and \cite{SlicehyperholomorphicSchuranalysis} for a comprehensive treatment of the theory of slice regular functions of one quaternionic variable.

Various attempts have been made to generalize this new hypercomplex theory and more general algebras (see e.g. \cite{Octonionsetting} and \cite{Cliffordsetting}) has been considered. Then, a new approach based on the concept of stem functions was introduced in \cite{SRFonAA} by Ghiloni and Perotti, using an idea that goes back to Fueter \cite{Fueter}. This gave slice analysis a crucial development extending the theory to any real alternative $^\ast$-algebra with unity, embodying the aforementioned generalizations, as well as the quaternionic case. Moreover, from this formulation, it was possible to give a definition of slice function without requiring any regularity assumption. 
The stem function approach paved the way to achieve an analogous theory in several variables \cite{Several}, which is now of great interest (see e.g. \cite{ghiloni2011slice},\cite{ghiloni2012slice},\cite{Parteteorica},\cite{colombo2012algebraic},\cite{gori2022zero}, \cite{DouSeveral}, and \cite{cervantes2023some}). Here we recall from \cite{Several} and \cite{Parteteorica}, the main features of the theory of slice regular functions of several variables, focusing on the quaternionic case.

\subsection{Slice regular functions of several quaternionic variables}
For any $n\in\mathbb{N}$, let $\mathcal{P}(n):=\mathcal{P}(\{1,...,n\})$, with $\mathcal{P}(U):=\{V\subset U\}$. Given $K=\left\{k_1,...,k_p\right\}\in\mathcal{P}(n)$, with $k_1<...<k_p$, and $(q_{k_1},...,q_{k_p})\subset\mathbb{H}^p$, denote the usual ordered $\mathbb{H}$-product of its elements by  $q_K:=q_{k_1}\cdot...\cdot q_{k_p}$ (if $K=\emptyset$, we set $q_\emptyset:=1$). 
Let $\mathbb{S}_\mathbb{H}$ be the set of square roots of $-1$ in $\mathbb{H}$, i.e.
$    \mathbb{S}_\mathbb{H}:=\{J\in\mathbb{H}\mid J^2=-1\}\subset\operatorname{Im}\left(\mathbb{H}\right)
$. Note that every element $q\in\mathbb{H}\setminus\mathbb{R}$ can be uniquely represented as $q=\alpha+J\beta$, with $\alpha\in\mathbb{R}$, $\beta\in\mathbb{R}^+$ and $J\in\mathbb{S}_\mathbb{H}$. For such $q=\alpha+J\beta$, define $\mathbb{S}_q=\mathbb{S}_{\alpha,\beta}:=\{\alpha+J\beta\mid J\in\mathbb{S}_\mathbb{H}\}\subset\mathbb{H}$. For any $h\in\{1,...,n\}$, let $\mathbb{R}_h:=\{(x_1,...,x_n)\in\mathbb{H}^n\mid x_h\in\mathbb{R}\}$ and if $H\in\mathcal{P}(n)$, let $\mathbb{R}_H:=\bigcap_{h\in H}\mathbb{R}_h$.

A continuous function $F=\sum_{K\in\mathcal{P}(n)}e_KF_K:D\subset\mathbb{C}^n\rightarrow\mathbb{H}\otimes\mathbb{R}^{2^n}$ is called stem function if the set $D$ is invariant under complex conjugation, (i.e. $z\in D$ if and only if $\overline{z}^h\in D$, where $\overline{z}^h:=(z_1,...,z_{h-1},\overline{z}_h,z_{h+1},...,z_n)$) and if $F_K$ satisfies
\begin{equation*}
    F_K(\overline{z}^h)=(-1)^{|K\cap\{h\}|}F_K(z),\qquad\forall K\in\mathcal{P}(n),h=1,...,n.
\end{equation*}
The circularization $\Omega_D\subset\mathbb{H}^n$ of an invariant set $D$ is defined as $\Omega_D:=\{(\alpha_1+J_1\beta_1,...,\alpha_n+J_n\beta_n)\mid (\alpha_1+i\beta_1,...,\alpha_n+i\beta_n)\in D, J_1,...,J_n\in\mathbb{S}_\mathbb{H}\}$. Any stem function $F$ uniquely induces a slice function $f:=\mathcal{I}(F):\Omega_D\rightarrow\mathbb{H}$, defined for every $x=(\alpha_1+J_1\beta_1,...,\alpha_n+J_n\beta_n)\in\Omega_D$ as
\begin{equation*}
f(x):=\displaystyle\sum_{K\in\mathcal{P}(n)}[J_K,F_K(z)],
\end{equation*}
where $z=(\alpha_1+i\beta_1,...,\alpha_n+i\beta_n)\in D$ and $[J_K,F_K(z)]:=J_KF_K(z)=J_{k_1}...J_{k_p}F_{\{k_1,...,k_p\}}(z)$ if $K=\{k_1,...,k_p\}$, with $k_1<...<k_p$. $Stem(D)$ will denote the set of stem functions $F:D\rightarrow\mathbb{H}\otimes\mathbb{R}^{2^n}$, $\mathcal{S}(\Omega_D)$ the set of slice functions $f:\Omega_D\rightarrow\mathbb{H}$ and $\mathcal{I}:Stem(D)\rightarrow\mathcal{S}(\Omega_D)$ the map sending a stem function to its induced slice function.
A slice function is called slice preserving whenever the components of its inducing stem function are real valued, we use the symbol $\mathcal{S}_\mathbb{R}(\Omega_D)$ to denote the set of slice preserving functions.

We can define a product between stem functions: given two stem functions $F$ and $G$, with $F=\sum_{K\in\mathcal{P}(n)}e_KF_K$, $G=\sum_{H\in\mathcal{P}(n)}e_HG_H$, define 
\begin{equation*}
    (F\otimes G)(z):=\displaystyle\sum_{H,K\in\mathcal{P}(n)}(-1)^{|H\cap K|}e_{K\Delta H}F_K(z)G_H(z),
\end{equation*}
with $K\Delta H:=(K\cup H)\setminus(K\cap H)$. The product of stem functions is again a stem function, \cite[Lemma 2.34]{Several} and this allows to define a product between slice functions: given $f=\mathcal{I}(F),g=\mathcal{I}(G)\in\mathcal{S}(\Omega_D)$, define $f\odot g:=\mathcal{I}(F\otimes G)$. Thus, $\mathcal{I}:(Stem(D),\otimes)\rightarrow(\mathcal{S}(\Omega_D),\odot)$ is an algebra isomorphism.

Equip $\mathbb{H}\otimes\mathbb{R}^{2^n}$ with the commuting complex structures $\mathcal{J}=\{\mathcal{J}_h\}_{h=1}^n$, where $\mathcal{J}_h(q\otimes e_K)=(-1)^{|K\cap\{h\}|}q\otimes e_{K\Delta\{h\}}$, for every $q\in\mathbb{H}$, $K\in\mathcal{P}(n)$, then define for every $h=1,...,n$ the operators
\begin{equation*}
    \partial_h:=\dfrac{1}{2}\left(\dfrac{\partial }{\partial \alpha_h}-\mathcal{J}_h\dfrac{\partial }{\partial\beta_h}\right),\qquad \overline{\partial}_h:=\dfrac{1}{2}\left(\dfrac{\partial }{\partial \alpha_h}+\mathcal{J}_h\dfrac{\partial }{\partial\beta_h}\right).
\end{equation*}
A stem function $F\in\mathcal{C}^1(D)$ is $h$-holomorphic when $\overline{\partial}_hF=0$ and holomorphic if it is $h$-holomorphic for every $h=1,...,n$. By \cite[Lemma 3.12]{Several}, $F=\sum_{K\in\mathcal{P}(n)}e_KF_K$ is $h$-holomorphic if and only if it satisfies the following Cauchy-Riemann system
\begin{equation*}
        \dfrac{\partial F_K}{\partial\alpha_h}=\dfrac{\partial F_{K\cup\{h\}}}{\partial\beta_h},\qquad
        \dfrac{\partial F_K}{\partial\beta_h}=-\dfrac{\partial F_{K\cup\{h\}}}{\partial\alpha_h},\qquad \forall K\in\mathcal{P}(n),h\notin K.
\end{equation*}
A slice regular function is a slice function induced by a holomorphic stem function. The set of slice regular functions over $\Omega_D$ will be denoted by $\mathcal{S}\mathcal{R}(\Omega_D)$. Since $\partial_h F$ and $\overline{\partial}_hF$ are stem functions if $F$ is so, we can define 
\begin{equation*}
    \dfrac{\partial f}{\partial x_h}:=\mathcal{I}(\partial_hF),\qquad \dfrac{\partial f}{\partial x_h^c}:=\mathcal{I}(\overline{\partial}_hF).
\end{equation*}
By \cite[Proposition 3.13]{Several}, $f\in\mathcal{S}\mathcal{R}(\Omega_D)$ if and only if $\partial f/\partial x_h^c=0$, for every $h=1,...,n$. Thus, $(\mathcal{S}\mathcal{R}(\Omega_D),\odot)$ is a real subalgebra of $(\mathcal{S}(\Omega_D),\odot)$, as consequence of Leibniz rule \cite[Proposition 3.25]{Several}.

Fix $h\in\{1,...,n\}$ and for any $y=(y_1,...,y_n)\in\Omega_D$, let $\Omega_{D,h}(y):=\{x\in\mathbb{H}\mid (y_1,...,y_{h-1},x,\linebreak y_{h+1},...,y_n)\in\Omega_D\}.$
We say that a function $f\in\mathcal{S}(\Omega_D)$ is slice (resp. slice regular) w.r.t. $x_h$ if for every $y\in\Omega_D$ its restriction $f^y_h:\Omega_{D,h}(y)\subset\mathbb{H}\rightarrow\mathbb{H}$, $f^y_h:x\mapsto f(y_1,...,y_{h-1},x,y_{h+1},...,y_n)$ is a one-variable slice (resp. slice regular) function. 
We call $f$ circular w.r.t. $x_h$ if $f^y_h$ is constant over the sets $\mathbb{S}_{\alpha,\beta}$ for any fixed $\alpha,\beta\in\mathbb{R}$ and $y\in\Omega_D$.
With $\mathcal{S}_h(\Omega_D)$, $\mathcal{S}\mathcal{R}_h(\Omega_D)$ and $\mathcal{S}_{c,h}(\Omega_D)$ we will denote respectively the subset of $\mathcal{S}(\Omega_D)$ of function that are slice, slice regular and circular w.r.t. $x_h$. For every $H\in\mathcal{P}(n)$, we also denote $\mathcal{S}_H(\Omega_D):=\bigcap_{h\in H}\mathcal{S}_h(\Omega_D)$, $\mathcal{S}\mathcal{R}_H(\Omega_D):=\bigcap_{h\in H}\mathcal{S}\mathcal{R}_h(\Omega_D)$ and $\mathcal{S}_{c,H}(\Omega_D):=\bigcap_{h\in H}\mathcal{S}_{c,h}(\Omega_D)$. In \cite[Proposition 3.1, 3.2, 3.4]{Parteteorica} those sets are characterized:
\begin{prop}
For any $H\in\mathcal{P}(n)$, it holds
\begin{equation*}
    \mathcal{S}_H(\Omega_D)=\left\{\mathcal{I}(F): F\in Stem(D), F=\sum_{K\subset H^c}e_KF_K+\sum_{h\in H}e_{\{h\}}\sum_{Q\subset\{ h+1,...,n\}\setminus H}e_QF_{\{h\}\cup Q}\right\},
\end{equation*}
\begin{equation}
    \label{Equazione caratterizzazione slice regular H}
\mathcal{S}\mathcal{R}_H(\Omega_D)=\mathcal{S}_H(\Omega_D)\bigcap_{h\in H}\ker(\partial/\partial x_h^c)\subset\mathcal{S}_H(\Omega_D),
\end{equation}
\begin{equation*}
    \mathcal{S}_{c,H}(\Omega_D)=\left\{\mathcal{I}(F): F\in Stem(D), F=\sum_{K\subset H^c}e_KF_K\right\}\subset\mathcal{S}_H(\Omega_D).
\end{equation*}
\end{prop}

Given $f\in\mathcal{S}(\Omega_D)$, induced by $F=\sum_{K\in\mathcal{P}(n)}e_KF_K$, we define its $x_h$-spherical value $f^\circ_{s,h}=\mathcal{I}(F^\circ_h)$ and its $x_h$-spherical derivative $f'_{s,h}=\mathcal{I}(F'_h)$, where $F^\circ_h\in Stem(D)$ and $F'_h\in Stem (D\setminus\mathbb{R}_h)$ are defined by
\begin{equation*}
    F^\circ_h(z)=\displaystyle\sum_{K\in\mathcal{P}(n),h\notin K}e_KF_K(z),\qquad F'_h(z)=\beta_h^{-1}\displaystyle\sum_{K\in\mathcal{P}(n),h\notin K}e_KF_{K\cup\{h\}}(z),
\end{equation*}
where $z=(z_1,...,z_n)=(\alpha_1+i\beta_1,...,\alpha_n+i\beta_n)$ and $\beta_h=\operatorname{im}(z_h)\in\mathbb{R}$. 
If $H=\{h_1,h_2,...,h_p\}\in\mathcal{P}(n)$ define also $f^\circ_{s,H}:=(\cdots(f^\circ_{s,h_1})^\circ_{s,h_2}\cdots)^\circ_{s,h_p}\in\mathcal{S}(\Omega_D)$ and $f'_{s,H}:=(\cdots(f'_{s,h_1})'_{s,h_2}\cdots)'_{s,h_p}\in\mathcal{S}(\Omega_{D_H})$, where $\Omega_{D_H}=\Omega_D\setminus\mathbb{R}_H$. 
Note that these definitions are well posed, since they do not depend on the order of $\{h_j\}_{h=1}^p$ \cite[Lemma 4.1]{Parteteorica}. Moreover, $f^\circ_{s,H}=\mathcal{I}(F^\circ_H)$, $f'_{s,H}=\mathcal{I}(F'_H)$, with
\begin{equation*}
    F^\circ_H(z)=\sum_{K\subset H^c}e_KF_K(z),\qquad F'_H(z)=\beta_H^{-1}\sum_{K\subset H^c}e_KF_{K\cup H}(z),
\end{equation*}
where $\beta_H:=\prod_{h\in H}\beta_h$. Note that, if $f\in\mathcal{S}^1(\Omega_D)$, then $f'_{s,H}$ is defined over all $\Omega_D$, for every $H\in\mathcal{P}(n)$. 
 In the next Proposition we recap the main properties of the spherical values and the spherical derivatives we will need through the paper. Their proofs can be found in \cite[\S 4]{Parteteorica}.
 
 \begin{prop}
\label{Proposizione richiami parte teorica}
Let $f,g\in\mathcal{S}(\Omega_D)$, $h\in\{1,...,n\}$ and $H\in\mathcal{P}(n)$, then the following holds true.
\begin{enumerate}
\item 
    \begin{equation}
    \label{equazione rappresentazione slice}
        f=f^\circ_{s,h}+\operatorname{Im}(x_h)\odot f'_{s,h},
    \end{equation} 
    where $\operatorname{Im}(x_h):\mathbb{H}^n\to\mathbb{H}$ is the slice function $(\alpha_1+J_1\beta_1,...,\alpha_n+J_n\beta_n)\mapsto J_h\beta_h$; 
    \item Leibniz formula
    \begin{equation}
    \label{equazione formula di Leibniz slice}
        (f\odot g)'_{s,h}=f'_{s,h}\odot g^\circ_{s,h}+f^\circ_{s,h}\odot g'_{s,h}.
    \end{equation}
    In particular, if $g\in\mathcal{S}_{c,H}(\Omega_D)$, then $(f\odot g)'_{s,H}=f'_{s,H}\odot g$.
    \item $f'_{s,H}\in\mathcal{S}_{c,H}(\Omega_{D_H})\cap\mathcal{S}_p(\Omega_{D_H})$, where $p=\min H^c=\min\{\{1,...,n\}\setminus H\}$;
    \item if $f\in\mathcal{S}_{c,h}(\Omega_D)$, then $f^\circ_{s,h}=f$ and $f'_{s,h}=0$;
    \item if $h\in H$, $H\cap\{1,...,h-1\}\neq\emptyset$ and $f\in\mathcal{S}_h(\Omega_D)$, then $f'_{s,H}=0$;
    \item if $f\in\ker(\partial/\partial x_t^c)$ for some $t\neq h$, then $f'_{s,h}\in\ker(\partial/\partial x_t^c)$;
    \item if $f\in\ker(\partial/\partial x_h^c)$, then $\Delta_hf'_{s,h}=0$, where $\Delta_h:=\frac{\partial^2}{\partial\alpha^2_h}+\frac{\partial^2}{\partial\beta^2_h}+\frac{\partial^2}{\partial\gamma^2_h}+\frac{\partial^2}{\partial\delta^2_h}$ is the Laplacian of $\mathbb{R}^4$ w.r.t. the variable $x_h=\alpha_h+i\beta_h+j\gamma_h+k\delta_h$.
\end{enumerate}
\end{prop}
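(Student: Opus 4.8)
The plan is to push everything through the algebra isomorphism $\mathcal{I}\colon(Stem(D),\otimes)\to(\mathcal{S}(\Omega_D),\odot)$, so that each assertion becomes an elementary statement about the components $F_K$ of the inducing stem function. Indeed the operators $f\mapsto f^\circ_{s,h}$ and $f\mapsto f'_{s,h}$ correspond to the explicit reindexings $F^\circ_h=\sum_{h\notin K}e_KF_K$ and $F'_h=\sum_{h\notin K}e_K\beta_h^{-1}F_{K\cup\{h\}}$, the product $\odot$ corresponds to $\otimes$ with its $(-1)^{|H\cap K|}$ sign and $\Delta$-grading, and $\ker(\partial/\partial x_h^c)$ corresponds to $h$-holomorphicity, i.e. the Cauchy--Riemann system recalled above. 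The only analytic input is that $\beta_h^{-1}$ is annihilated by $\partial/\partial\alpha_t$ and $\partial/\partial\beta_t$ whenever $t\neq h$.

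For part (1) I would first identify $\operatorname{Im}(x_h)=\mathcal{I}(e_{\{h\}}\beta_h)$ and then compute $(e_{\{h\}}\beta_h)\otimes F'_h$; since $h\notin K$ in every term of $F'_h$, the sign is trivial, $\{h\}\Delta K=\{h\}\cup K$, the factor $\beta_h$ cancels $\beta_h^{-1}$, and one recovers exactly $\sum_{h\in K'}e_{K'}F_{K'}$, which added to $F^\circ_h$ gives back $F$. Part (2) is the same kind of bookkeeping: writing out $(F\otimes G)'_h$ via $\beta_h^{-1}(F\otimes G)_{L\cup\{h\}}$ and splitting the defining sum of that component according to whether the index $h$ is contributed by the left factor or the right one produces precisely the two summands $F'_h\otimes G^\circ_h$ and $F^\circ_h\otimes G'_h$; the ``in particular'' clause then follows by iterating the formula over $h\in H$ and using part (4), since a factor in $\mathcal{S}_{c,H}$ has vanishing spherical derivative and unchanged spherical value in each such variable. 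Parts (4) and (5) read off directly from the component descriptions: for (4) the hypothesis $f\in\mathcal{S}_{c,h}$ forces $F_{K}=0$ whenever $h\in K$, killing $F'_h$ and leaving $F^\circ_h=F$; for (5) every surviving index of $F'_H$ is of the form $K\cup H$ with $K\subset H^c$, and since $h\in H$ together with some element of $H$ smaller than $h$ both lie in $K\cup H$, the $\mathcal{S}_h$-characterization forces $F_{K\cup H}=0$.

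Part (3) is again purely formal once the explicit shape $F'_H=\beta_H^{-1}\sum_{K\subset H^c}e_KF_{K\cup H}$ is used: only indices $K\subset H^c$ occur, which is exactly the condition defining $\mathcal{S}_{c,H}$; and because $p=\min H^c$ every such $K$ satisfies $K\subset H^c\subset\{p,\dots,n\}$, so no surviving index contains an element smaller than $p$, which is the characterization of $\mathcal{S}_p$. Part (6) is a direct verification of the Cauchy--Riemann system in the variable $t$ for $F'_h$: using that $\partial/\partial\alpha_t$ and $\partial/\partial\beta_t$ commute with multiplication by $\beta_h^{-1}$ and applying the $t$-holomorphicity of $F$ to the shifted indices $K\cup\{h\}$ and $K\cup\{h\}\cup\{t\}$ reproduces the system for the components $\beta_h^{-1}F_{K\cup\{h\}}$ (the indices containing $h$ giving the trivial $0=0$ relations).

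The main obstacle is part (7). Here I would first extract from the $h$-holomorphicity of $F$, by cross-differentiating the two Cauchy--Riemann equations in $(\alpha_h,\beta_h)$ and adding, that every component $F_{K\cup\{h\}}$ is harmonic in the two real variables $(\alpha_h,\beta_h)$, i.e. $(\partial_{\alpha_h}^2+\partial_{\beta_h}^2)F_{K\cup\{h\}}=0$. The second and more delicate ingredient is the precise form of $\Delta_h$ on a function circular in $x_h$: since $f'_{s,h}$ depends on $x_h$ only through $\alpha_h$ and $\beta_h=|\operatorname{Im}(x_h)|$, the four-dimensional Laplacian reduces to $\partial_{\alpha_h}^2+\partial_{\beta_h}^2+\tfrac{2}{\beta_h}\partial_{\beta_h}$. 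Applying this operator to a single component $\beta_h^{-1}F_{K\cup\{h\}}$ and expanding, the terms in $\beta_h^{-3}F_{K\cup\{h\}}$ and $\beta_h^{-2}\partial_{\beta_h}F_{K\cup\{h\}}$ cancel identically, leaving exactly $\beta_h^{-1}(\partial_{\alpha_h}^2+\partial_{\beta_h}^2)F_{K\cup\{h\}}$, which vanishes by the harmonicity just established. I expect the bookkeeping of these cancellations, together with the correct identification of the radial form of $\Delta_h$, to be the only genuinely calculational point; everything else is combinatorial matching of indices.
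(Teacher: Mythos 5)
Your proposal is correct, but there is nothing in this paper to compare it against: Proposition \ref{Proposizione richiami parte teorica} is a recap statement whose proofs the paper explicitly delegates to \cite[\S 4]{Parteteorica}, so no in-paper argument exists. Your route --- transporting everything through $\mathcal{I}$ and checking each claim on the components $F_K$ --- is precisely the kind of argument that reference and this paper's own Section 3 run, and all your steps check out: the identification $\operatorname{Im}(x_h)=\mathcal{I}(e_{\{h\}}\beta_h)$ with the trivial sign $(-1)^{|\{h\}\cap K|}=1$ for $h\notin K$ in (1); the splitting of $(F\otimes G)_{L\cup\{h\}}$ according to which factor carries $h$ in (2); the index constraints $K\subset H^c\subset\{p,\dots,n\}$ against the displayed characterizations of $\mathcal{S}_{c,H}$ and $\mathcal{S}_p$ in (3)--(5); the commutation of $\partial/\partial\alpha_t,\partial/\partial\beta_t$ with $\beta_h^{-1}$ plus the shifted Cauchy--Riemann relations (with the $0=0$ cases for $h\in K$) in (6); and in (7) the harmonicity of $F_{K\cup\{h\}}$ in $(\alpha_h,\beta_h)$ from cross-differentiating the Cauchy--Riemann system together with the radial reduction $\Delta_h=\partial_{\alpha_h}^2+\partial_{\beta_h}^2+2\beta_h^{-1}\partial_{\beta_h}$ on $x_h$-circular functions, where indeed $\bigl(\partial_{\beta_h}^2+2\beta_h^{-1}\partial_{\beta_h}\bigr)\bigl(\beta_h^{-1}G\bigr)=\beta_h^{-1}\partial_{\beta_h}^2G$. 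Two points worth making explicit if you write this up: in (7) the computation lives on $\Omega_D\setminus\mathbb{R}_h$ (the even parity of $F_{K\cup\{h\}}$ in $\beta_h$ is what controls the set $\beta_h=0$ when $f\in\mathcal{S}^1$), and the harmonicity step tacitly uses that the $h$-holomorphic pair $F_K+iF_{K\cup\{h\}}$ is automatically smooth in $(\alpha_h,\beta_h)$, so the second derivatives exist.
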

We can give (\ref{equazione rappresentazione slice}) and (\ref{equazione formula di Leibniz slice}) through stem functions: let $F,G\in Stem(D)$ and $h\in\{1,...,n\}$, then it holds
\begin{equation}
     \label{equazione rappresentazione stem}
         F=F_h^\circ+\operatorname{Im}(Z_h)\otimes F'_h,
\end{equation}
where $\operatorname{Im}(Z_h)=\mathcal{I}^{-1}(\operatorname{Im}(x_h))\in Stem(\mathbb{C}^n)$, $\operatorname{Im}(Z_h)(z_1,\dots,z_n)=e_h\beta_h$, if $z_h=\alpha_h+i\beta_h$ and
\begin{equation}
     \label{equazione formula di Leibniz stem}
         (F\otimes G)'_h=F'_h\otimes G^\circ_h+F^\circ_h\otimes G'_h.
\end{equation}

Let us represent any $x=(x_1,...,x_n)\in\mathbb{H}^n$ with real coordinates as $x_h=\alpha_h+i\beta_h+j\gamma_h+k\delta_h$, then recall two differential operators for any $h=1,...,n$
\begin{equation*}
    \partial_{x_h}:=\dfrac{1}{2}\left(\dfrac{\partial}{\partial\alpha_h}-i\dfrac{\partial}{\partial\beta_h}-j\dfrac{\partial}{\partial\gamma_h}-k\dfrac{\partial}{\partial\delta_h}\right),\qquad
    \overline{\partial}_{x_h}:=\dfrac{1}{2}\left(\dfrac{\partial}{\partial\alpha_h}+i\dfrac{\partial}{\partial\beta_h}+j\dfrac{\partial}{\partial\gamma_h}+k\dfrac{\partial}{\partial\delta_h}\right).
\end{equation*}
They extend to several variables the well known Cauchy-Riemann-Fueter operators $\partial_{CRF}$ and $\overline{\partial}_{CRF}$ \cite[\S 6]{Harmonicity}. Sometimes notation can be slightly different and the factor $1/2$ can be omitted.  
These operators factorize the Laplacian, indeed $\forall h=1,...,n$
\begin{equation}
\label{equazione fattorizzazione laplaciano}
    4\partial_{x_h}\overline{\partial}_{x_h}=4\overline{\partial}_{x_h}\partial_{x_h}=\Delta_h.
\end{equation}
Functions in the kernel of $\overline{\partial}_{x_h}$ are usually called monogenic (or Fueter regular) w.r.t. $x_h$. By \eqref{equazione fattorizzazione laplaciano}, we get that $x_h$-monogenic functions are, in particular, harmonic w.r.t. $x_h$.
Furthermore we call $x_h$-axially monogenic those slice functions which are monogenic w.r.t. $x_h$, too, i.e.  $\mathcal{A}\mathcal{M}_h(\Omega_D):=\{f\in\mathcal{S}(\Omega_D)\mid f\in\mathcal{C}^1(\Omega_D), \ \overline{\partial}_{x_h}f=0\}$.
Finally, $\overline{\partial}_{x_h}f$ and $f'_{s,h}$ are closely related, whenever $f\in\mathcal{S}\mathcal{R}_h(\Omega_D)$, indeed by \cite[Lemma 4.3]{Parteteorica} it holds 
\begin{equation}
\label{Equazione relazione operatore crf e derivata sferica}
    \overline{\partial}_{x_h}f=-f'_{s,h}.
\end{equation}

\section{Almansi-type decomposition}

\subsection{Main results}

\begin{defn}
\label{Definizione S}
Let $\Omega_D\subset\mathbb{H}^n$, $f\in\mathcal{S}(\Omega_D)$ and $H\in\mathcal{P}(n)$. For every $K=\{k_1,...,k_p\}\subset \mathbb{H}$, with $k_1<...<k_p$, define over $\Omega_{D_H}$ the slice functions
\begin{equation*}
    \mathcal{S}^H_K(f):=\left(x_K\odot f\right)'_{s,H}=\left(\prod_{i=1}^px_{k_i}\odot f\right)'_{s,H},
\end{equation*}
and set $\mathcal{S}^\emptyset_\emptyset(f):=f$. If $H=\llbracket m\rrbracket:=\{1,2,...,m\}$ is an integers interval from 1 to some $m\in\{1,...,n\}$, we can write $\forall K\in\mathcal{P}(m)$
\begin{equation*}
    \mathcal{S}^{\llbracket m\rrbracket}_K(f):=(x_m^{\chi_K(m)}\dots(x_1^{\chi_K(1)}f)'_{s,1}\dots)'_{s,m},
    \end{equation*}
    where $\chi_K$ is the characteristic function of the set $K$. 
Note that, in this case, we can use the ordinary pointwise product as well as the slice product \cite[Proposition 2.52]{Several}.
If $f=\mathcal{I}(F)$, every $\mathcal{S}^H_K(f)$ is induced by the stem function
\begin{equation*}
    G^H_K(F):=\left(Z_K\otimes F\right)'_H,
\end{equation*}
where $Z_j\in Stem(\mathbb{C}^n)$ is the stem function $Z_j(\alpha_1+i\beta_1,...,\alpha_n+i\beta_n):=\alpha_j+e_j\beta_j$, inducing the monomial $x_j\in\mathcal{S}(\mathbb{H}^n)$, for any $j=1,...,n$.
\end{defn}

We can now formulate our main result.

\begin{thm}
\label{Teorema principale}
Let $\Omega_D\subset\mathbb{H}^n$ be a circular set and let $f\in\mathcal{S}(\Omega_D)$ be a slice function. Fix any $H\in\mathcal{P}(n)$, then
\begin{enumerate}
    \item we can decompose $f$ as
    \begin{equation}
    \label{Formula decomposizione di Almansi}
        f(x)=\displaystyle\sum_{K\subset H}(-1)^{|H\setminus K|}\left(\overline{x}\right)_{H\setminus K}\odot\mathcal{S}^H_K(f)(x).
    \end{equation}
    \end{enumerate}
\begin{enumerate}[resume]
    \item $\mathcal{S}^H_K(f)\in\mathcal{S}_{c,H}(\Omega_{D_H})\cap\mathcal{S}_p(\Omega_{D_H})$, where $p=\min H^c$, $\forall K\subset H$;
    \item if $f\in\mathcal{S}\mathcal{R}(\Omega_D)$, then $\Delta_h\mathcal{S}^H_K(f)=0$, $\forall h\in H$, $\forall K\subset H$;
    \end{enumerate}
\begin{enumerate}[resume]
\item $f\in\mathcal{S}\mathcal{R}(\Omega_D)$ if and only if $\mathcal{S}^H_K(f)\in\mathcal{S}\mathcal{R}_p(\Omega_D)$, $\forall H\in\mathcal{P}(n)$, $K\subset H$, $p=\min H^c$;
\item $f\in\mathcal{S}_\mathbb{R}(\Omega_D)$ if and only if $\mathcal{S}^{\llbracket n\rrbracket}_K(f)$ is real valued, $\forall K\in\mathcal{P}(n)$.
\end{enumerate}

\end{thm}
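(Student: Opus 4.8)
The strategy is to reduce the whole statement to the one-variable case and then iterate, using only Proposition \ref{Proposizione richiami parte teorica} together with two elementary facts about the monomials: each $x_j$ and each $\overline{x}_j$ is slice preserving (its inducing stem components are the real functions $\alpha_j,\pm\beta_j$), so all such factors commute under $\odot$; and $x_j$ is circular with respect to $x_h$ for $h\neq j$. For (1) I would first prove the single-variable identity, valid for every $f\in\mathcal{S}(\Omega_D)$ and every $h$,
\[
f=(x_h\odot f)'_{s,h}-\overline{x}_h\odot f'_{s,h}.
\]
Indeed $(x_h)^\circ_{s,h}=\operatorname{Re}(x_h)$ and $(x_h)'_{s,h}=1$ by the definition of $Z_h$, so Leibniz (\ref{equazione formula di Leibniz slice}) gives $(x_h\odot f)'_{s,h}=f^\circ_{s,h}+\operatorname{Re}(x_h)\odot f'_{s,h}$; subtracting $\overline{x}_h\odot f'_{s,h}=(\operatorname{Re}(x_h)-\operatorname{Im}(x_h))\odot f'_{s,h}$ and invoking the representation (\ref{equazione rappresentazione slice}) closes it.

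The general formula (\ref{Formula decomposizione di Almansi}) then follows by induction on $|H|$. Writing $h_0=\max H$ and $H'=H\setminus\{h_0\}$, I split $f=u-\overline{x}_{h_0}\odot v$ with $u=(x_{h_0}\odot f)'_{s,h_0}$ and $v=f'_{s,h_0}$, and apply the inductive decomposition for $H'$ to $u$ and to $v$. Since $K'\subset H'$ excludes $h_0$, the monomial $x_{K'}$ is circular in $x_{h_0}$, so the "in particular" part of (\ref{equazione formula di Leibniz slice}) lets me pull it through $'_{s,h_0}$; this turns the components of $u$ into $\mathcal{S}^H_{K'\cup\{h_0\}}(f)$ (the sets $K$ containing $h_0$) and those of $v$ into $\mathcal{S}^H_{K'}(f)$ (the sets not containing $h_0$). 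Slice preservingness then lets me reassemble $\overline{x}_{h_0}\odot(\overline{x})_{H'\setminus K'}$ into the ordered product $(\overline{x})_{H\setminus K}$, and collecting the two families reproduces (\ref{Formula decomposizione di Almansi}) with the correct signs. This slice-product and sign bookkeeping is the only genuinely delicate point; the slice preservingness of the factors is exactly what keeps it routine.

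Parts (2) and (3) are short. Part (2) is immediate from Proposition \ref{Proposizione richiami parte teorica}(3) applied to the slice function $x_K\odot f$, whose $H$-spherical derivative is $\mathcal{S}^H_K(f)$. For (3), fix $h\in H$ and write $\mathcal{S}^H_K(f)=g'_{s,h}$ with $g=(x_K\odot f)'_{s,H\setminus\{h\}}$; since $x_K$ is a slice regular polynomial and $\mathcal{S}\mathcal{R}(\Omega_D)$ is a subalgebra, $x_K\odot f\in\mathcal{S}\mathcal{R}(\Omega_D)\subset\ker(\partial/\partial x_h^c)$, and Proposition \ref{Proposizione richiami parte teorica}(6) applied once for each $t\in H\setminus\{h\}$ keeps $g$ in $\ker(\partial/\partial x_h^c)$; Proposition \ref{Proposizione richiami parte teorica}(7) then yields $\Delta_h\mathcal{S}^H_K(f)=0$.

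Finally (4) and (5). For (4, $\Rightarrow$), with $p=\min H^c\notin H$ one has $x_K\odot f\in\ker(\partial/\partial x_p^c)$ as above, and the spherical derivatives $'_{s,h}$ ($h\in H$, all $\neq p$) preserve this kernel by Proposition \ref{Proposizione richiami parte teorica}(6), so $\mathcal{S}^H_K(f)\in\ker(\partial/\partial x_p^c)$; with $\mathcal{S}_p$ from (2) this is exactly $\mathcal{S}\mathcal{R}_p(\Omega_D)$. For (4, $\Leftarrow$), to obtain $f\in\ker(\partial/\partial x_t^c)$ I take $H=\{1,\dots,t-1\}$, so that $\min H^c=t$; in the decomposition (\ref{Formula decomposizione di Almansi}) every factor $(\overline{x})_{H\setminus K}$ is constant in $x_t$ (as $t\notin H$) and every $\mathcal{S}^H_K(f)$ lies in $\mathcal{S}\mathcal{R}_t$ by hypothesis, so each term, and hence $f$, lies in $\ker(\partial/\partial x_t^c)$ by the Leibniz rule for $\partial/\partial x_t^c$. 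For (5), part (2) forces $\mathcal{S}^n_K(f)\in\mathcal{S}_{c,\{1,\dots,n\}}$, so it equals $\mathcal{I}\bigl(e_\emptyset\,\beta_{\{1,\dots,n\}}^{-1}(Z_K\otimes F)_{\{1,\dots,n\}}\bigr)$ with $(Z_K\otimes F)_{\{1,\dots,n\}}=\sum_{A\subset K}\beta_A\alpha_{K\setminus A}F_{\{1,\dots,n\}\setminus A}$; if $f\in\mathcal{S}_\mathbb{R}$ all $F_L$ are real and every $\mathcal{S}^n_K(f)$ is real valued, while conversely running $K$ from $\emptyset$ upward yields a triangular system in which the top term $\beta_K F_{K^c}$ isolates $F_{K^c}$ once the $F_L$ with $|L|>|K^c|$ are already known to be real, forcing every $F_L$ real, i.e. $f\in\mathcal{S}_\mathbb{R}(\Omega_D)$.
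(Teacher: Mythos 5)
Your proof is correct, and its skeleton coincides with the paper's: part (1) is the same induction on $|H|$ starting from the one-variable identity $f=(x_h\odot f)'_{s,h}-\overline{x}_h\odot f'_{s,h}$, and parts (2), (3) and the forward half of (4) use exactly the paper's arguments, namely Proposition \ref{Proposizione richiami parte teorica} (3), (6), (7) and the characterization (\ref{Equazione caratterizzazione slice regular H}). Three differences are worth noting. You run the induction for (1) at the slice-function level, using the Leibniz formula (\ref{equazione formula di Leibniz slice}) and slice-preservingness to commute monomials past spherical derivatives; the paper works at the stem level (Corollary \ref{Corollario decomposizione per stem}, via the recursion in Proposition \ref{Proposizione proprieta stem} (1)) and then applies $\mathcal{I}$ --- equivalent routes, since $\mathcal{I}$ is an algebra isomorphism, though the stem computation makes the commutation bookkeeping automatic, which you instead must (and do) justify. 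For the reverse half of (4) the paper cites Proposition \ref{Proposizione caratterizzazione slice regolarita con componenti ordinate}, whose proof is precisely the argument you inline (ordered decomposition with $H=\{1,\dots,t-1\}$ plus the Leibniz rule for $\partial/\partial x_t^c$), so this is the same argument made self-contained. The genuinely different piece is (5): the paper invokes the inversion formula (\ref{formula finale per slice preserving}), proved separately by induction over $|H|$, whereas you expand $Z_K=\sum_{A\subset K}e_A\beta_A\alpha_{K\setminus A}$, identify $\mathcal{S}^n_K(f)$ with $\beta_{\{1,\dots,n\}}^{-1}\sum_{A\subset K}\beta_A\alpha_{K\setminus A}F_{\{1,\dots,n\}\setminus A}$, and solve the triangular system by induction on $|K|$; this is more elementary and avoids the general lemma, at the cost of giving the equivalence only off the sets where some $\beta_h$ vanishes --- but the paper's formula carries the same restriction, so you match its level of rigor.
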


\begin{oss}
    For any $H\in\mathcal{P}(n)$ we can define the $\mathbb{H}$-right linear operator
\begin{equation*}
    \mathcal{S}^H:\mathcal{S}(\Omega_D)\ni f\mapsto\left\{\mathcal{S}^H_K(f)\right\}_{K\subset H}\in\left(\mathcal{S}_{c,H}(\Omega_{D_H})\cap\mathcal{S}_p(\Omega_{D_H})\right)^{2^{|H|}},
\end{equation*}
where $p:=\min H^c$ and its restriction
\begin{equation*}
    \mathcal{S}^H:\mathcal{S}\mathcal{R}(\Omega_D)\rightarrow\left(\mathcal{S}_{c,H}(\Omega_D)\cap\mathcal{S}\mathcal{R}_p(\Omega_D)\bigcap_{h\in H}\ker\Delta_h\right)^{2^{|H|}}.
\end{equation*}
\end{oss}

We highlight the unique character of the decomposition, indeed for every choice of $H\in\mathcal{P}(n)$, the functions $\mathcal{S}^H_K(f)$ are the only $H$-circular functions that realize decomposition (\ref{Formula decomposizione di Almansi}).
\begin{prop}
\label{Proposizione unicità decomposizione}
    Let $f\in\mathcal{S}(\Omega_D)$ and fix $H\in\mathcal{P}(n)$. Suppose that there exist functions $\{h_K\}_{K\subset H}$ such that $h_K\in\mathcal{S}_{c,H}(\Omega_D)$, $\forall K\subset H$ and
    \begin{equation*}
        f(x)=\displaystyle\sum_{K\subset H}(-1)^{|H\setminus K|}\left(\overline{x}\right)_{H\setminus K}\odot h_K(x).
    \end{equation*}
Then $h_K=\mathcal{S}^H_K(f)$.
\end{prop}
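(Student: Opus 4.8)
The plan is to recover each coefficient $h_L$ by feeding the hypothetical decomposition into the very operator that defines $\mathcal{S}^H_L$, namely $g\mapsto(x_L\odot g)'_{s,H}$, and to show that this operator isolates exactly the $L$-th term. Applying it to the identity in the statement and using $\mathbb{H}$-right linearity gives
\begin{equation*}
\mathcal{S}^H_L(f)=(x_L\odot f)'_{s,H}=\sum_{K\subset H}(-1)^{|H\setminus K|}\bigl(x_L\odot\left(\overline{x}\right)_{H\setminus K}\odot h_K\bigr)'_{s,H}.
\end{equation*}
Since each $h_K\in\mathcal{S}_{c,H}(\Omega_D)$, the circular Leibniz rule (the \emph{in particular} clause of Proposition \ref{Proposizione richiami parte teorica}, item 2) lets me pull $h_K$ out of the spherical derivative, reducing the right-hand side to $\sum_{K\subset H}(-1)^{|H\setminus K|}\bigl(x_L\odot\left(\overline{x}\right)_{H\setminus K}\bigr)'_{s,H}\odot h_K$. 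Everything then hinges on the purely monomial coefficient $c_{L,K}:=\bigl(x_L\odot\left(\overline{x}\right)_{H\setminus K}\bigr)'_{s,H}$.

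To evaluate $c_{L,K}$ I would first observe that the monomials $x_h,\overline{x}_h$ are slice preserving (their stem functions $Z_h,\overline{Z}_h$ have real components), hence central for $\odot$; this allows me to regroup $x_L\odot\left(\overline{x}\right)_{H\setminus K}$ as a product of single-variable factors $\bigodot_{h\in H}p_h$, where $p_h=x_h\odot\overline{x}_h$ if $h\in L\setminus K$, $p_h=x_h$ if $h\in L\cap K$, $p_h=\overline{x}_h$ if $h\in H\setminus(L\cup K)$, and $p_h=1$ if $h\in K\setminus L$. A short stem computation gives $x_h\odot\overline{x}_h=\alpha_h^2+\beta_h^2$, which is circular in $x_h$. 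As each $p_h$ depends only on $x_h$, iterating the one-variable Leibniz formula (\ref{equazione formula di Leibniz slice})—at every step the remaining factors are circular in the active variable, so only the differentiated factor survives—yields $c_{L,K}=\bigodot_{h\in H}(p_h)'_{s,h}$. The elementary values $(x_h)'_{s,h}=1$, $(\overline{x}_h)'_{s,h}=-1$, $(\alpha_h^2+\beta_h^2)'_{s,h}=0$ and $(1)'_{s,h}=0$ then force $c_{L,K}=0$ unless $L\setminus K=K\setminus L=\emptyset$, i.e. $K=L$, in which case $c_{L,L}=(-1)^{|H\setminus L|}$.

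Substituting back collapses the sum to its single $K=L$ term:
\begin{equation*}
\mathcal{S}^H_L(f)=(-1)^{|H\setminus L|}(-1)^{|H\setminus L|}h_L=h_L,
\end{equation*}
which is the claimed identity for every $L\subset H$. I expect the only genuine work to be the bookkeeping inside the monomial computation: justifying the regrouping into single-variable factors via centrality, verifying that the iterated spherical derivative really factors as $\bigodot_{h}(p_h)'_{s,h}$ (so that a single circular factor annihilates the whole product), and tracking the two sign contributions $(-1)^{|H\setminus K|}$ and $c_{L,L}$ so that they cancel. A minor point to record is the domain: the spherical derivatives live a priori on $\Omega_{D_H}$, so the equality $h_K=\mathcal{S}^H_K(f)$ is first obtained there, and on all of $\Omega_D$ when $f\in\mathcal{S}^1(\Omega_D)$.
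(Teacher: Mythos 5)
Your proposal is correct and follows essentially the same route as the paper: apply $(x_L\odot\cdot)'_{s,H}$ to the decomposition, pull each $h_K$ out via the circular Leibniz rule, and show the monomial coefficient $\bigl(x_L\odot(\overline{x})_{H\setminus K}\bigr)'_{s,H}$ equals $(-1)^{|H\setminus L|}\delta_{L,K}$. The paper establishes the vanishing for $K\neq L$ by exhibiting a single variable $h$ (in $K\setminus L$ or $L\setminus K$) in which the product is circular—using the same observation $x_h\odot\overline{x}_h=\alpha_h^2+\beta_h^2$—so your per-variable factorization is just a more systematic bookkeeping of the identical computation.
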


From Theorem \ref{Teorema principale}, we emphasize the case in which $H=\llbracket m\rrbracket=\{1,2,...,m\}$, that leads to what we call an ordered decomposition of $f$. 
\begin{cor}
\label{Corollario Almansi ordinato}
Let $f\in\mathcal{S}(\Omega_D)$ and fix any $m\in\{1,...,n\}$, then we can orderly decompose $f$ as
    \begin{equation}
    \label{Formula decomposizione di Almansi ordinata}
        f(x)=\displaystyle\sum_{K\in\mathcal{P}(m)}(-1)^{|K^c|}\left(\overline{x}\right)_{K^c}\mathcal{S}^{\llbracket m\rrbracket}_K(f)(x),
    \end{equation}
    where $K^c=\{1,...,m\}\setminus K$. Moreover,
    \begin{enumerate}
    \item if $m<n$, $\mathcal{S}^{\llbracket m\rrbracket}_K(f)\in\mathcal{S}_{c,\{1,...,m\}}(\Omega_{D_{\llbracket m\rrbracket}})\cap\mathcal{S}_{m+1}(\Omega_{D_{\llbracket m\rrbracket}})$, for any $K\in\mathcal{P}(m)$, while $\mathcal{S}^{\llbracket n\rrbracket}_K(f)\in\mathcal{S}_{c,\{1,...,n\}}(\Omega_{D_{\llbracket n\rrbracket}})$;
    \item if $f\in\mathcal{S}\mathcal{R}(\Omega_D)$, then $\Delta_h\mathcal{S}^{\llbracket m\rrbracket}_K(f)=0$, $\forall h\leq m$, $\forall K\in\mathcal{P}(m)$.
        \end{enumerate}
\end{cor}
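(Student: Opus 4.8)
The plan is to obtain the statement as the specialization of Theorem \ref{Teorema principale} to the integer interval $H=\llbracket1,m\rrbracket=\{1,\dots,m\}$, so the corollary is not proved from scratch but read off from the general result. With this choice, for every $K\subset H$ one has $H\setminus K=\{1,\dots,m\}\setminus K=K^c$ and $|H\setminus K|=|K^c|$, and the subsets $K\subset H$ are exactly the elements of $\mathcal{P}(m)$. Hence the general decomposition (\ref{Formula decomposizione di Almansi}) becomes
\begin{equation*}
    f(x)=\sum_{K\in\mathcal{P}(m)}(-1)^{|K^c|}\left(\overline{x}\right)_{K^c}\odot\mathcal{S}^m_K(f)(x),
\end{equation*}
and it remains only to translate this identity and the properties of the components into the ordered language of the statement.

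First I would replace the slice product $\odot$ by the ordinary pointwise product. By part (2) of Theorem \ref{Teorema principale} every component $\mathcal{S}^m_K(f)$ lies in $\mathcal{S}_{c,\{1,\dots,m\}}(\Omega_{D_H})$, that is, it is circular with respect to each of the variables $x_1,\dots,x_m$. Since the \emph{left} factor $\left(\overline{x}\right)_{K^c}$ depends only on the variables $x_j$ with $j\in K^c\subset\{1,\dots,m\}$, and in all of these the \emph{right} factor is circular, the slice product coincides with the pointwise product by \cite[Proposition 2.52]{Several}; this is the same principle already invoked in Definition \ref{Definizione S} to write the $\mathcal{S}^m_K(f)$ with ordinary products. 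This turns the displayed identity into (\ref{Formula decomposizione di Almansi ordinata}).

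For the two remaining assertions I would simply evaluate the corresponding parts of Theorem \ref{Teorema principale} at $H=\llbracket1,m\rrbracket$. When $m<n$ one has $H^c=\{m+1,\dots,n\}$, hence $p=\min H^c=m+1$, and part (2) gives $\mathcal{S}^m_K(f)\in\mathcal{S}_{c,\{1,\dots,m\}}(\Omega_{D_{\llbracket1,m\rrbracket}})\cap\mathcal{S}_{m+1}(\Omega_{D_{\llbracket1,m\rrbracket}})$. The case $m=n$ must be treated separately, because then $H^c=\emptyset$ and $\min H^c$ is undefined, so the $\mathcal{S}_p$ condition disappears and only the circularity $\mathcal{S}^n_K(f)\in\mathcal{S}_{c,\{1,\dots,n\}}(\Omega_{D_{\llbracket1,n\rrbracket}})$ survives, exactly as stated. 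Finally, if $f\in\mathcal{S}\mathcal{R}(\Omega_D)$, part (3) of Theorem \ref{Teorema principale} yields $\Delta_h\mathcal{S}^m_K(f)=0$ for every $h\in H=\{1,\dots,m\}$, which is precisely the claim $\Delta_h\mathcal{S}^m_K(f)=0$ for all $h\leq m$.

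Since the corollary is essentially a specialization, I do not expect a serious obstacle. The only genuinely new point beyond invoking Theorem \ref{Teorema principale} is the passage from $\odot$ to the pointwise product: one must use that it is the \emph{right} factor $\mathcal{S}^m_K(f)$ that is circular in all the relevant variables (for a non-slice-preserving left factor the two products do not agree in the opposite order), and that the ordered monomial structure of $\left(\overline{x}\right)_{K^c}$ keeps the factors in the correct order. The secondary care is the bookkeeping of the $m=n$ case, where the $\mathcal{S}_{m+1}$ slice condition is vacuous.
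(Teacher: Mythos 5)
Your route is the same as the paper's: Corollary~\ref{Corollario Almansi ordinato} is not proved from scratch but read off from Theorem~\ref{Teorema principale} with $H=\llbracket1,m\rrbracket$, the parts (2) and (3) specialize exactly as you say (including the correct bookkeeping of the case $m=n$, where $\min H^c$ is undefined and the $\mathcal{S}_p$ condition drops out), and the replacement of $\odot$ by the pointwise product rests on \cite[Proposition 2.52]{Several} — precisely the reference the paper invokes in its remark after the corollary.

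However, the criterion you give for that replacement is wrong as stated, even though the conclusion is correct. You argue that it suffices that the right factor be circular \emph{in the variables on which the left factor depends}, i.e.\ in the $x_j$ with $j\in K^c$. The paper's own Example~1 refutes this criterion: for $H=\{2\}$ the left factor $\overline{x}_2$ depends only on $x_2$, and the right factor $\mathcal{S}^{\{2\}}_\emptyset(f)=x_1$ is circular w.r.t.\ $x_2$, yet $\overline{x}_2\odot x_1=x_1\overline{x}_2\neq\overline{x}_2\,x_1$ in general — which is exactly why the paper stresses that in (\ref{Formula decomposizione di Almansi}) the slice product is necessary for non-ordered $H$. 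What actually saves the ordered case is circularity with respect to the \emph{entire} initial segment $\{1,\dots,m\}$, which Theorem~\ref{Teorema principale}(2) provides: the stem of $\mathcal{S}^m_K(f)$ is then supported on the $e_T$ with $T\subset\{m+1,\dots,n\}$, while the stem components of the monomial $\overline{Z}_{K^c}$ are real-valued and supported on $e_L$ with $L\subset K^c\subset\{1,\dots,m\}$; since $\max L\le m<\min T$, the units compose as $J_LJ_T=J_{L\cup T}$ in increasing order and $\mathcal{I}\bigl(\overline{Z}_{K^c}\otimes G\bigr)$ coincides with the pointwise product. Your closing remark about the ordered monomial structure "keeping the factors in the correct order" gestures at this mechanism, but the operative sentence of your argument invokes the wrong hypothesis; replace "circular in the variables of the left factor" by "circular w.r.t.\ all of $x_1,\dots,x_m$, so that every index active in the right factor strictly exceeds every index of the left monomial," and the proof is complete and matches the paper's intended argument.
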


We point out that formula (\ref{Formula decomposizione di Almansi ordinata}) holds with the ordinary pointwise product \cite[Proposition 2.52]{Several}. On the contrary, in (\ref{Formula decomposizione di Almansi}) the slice product is necessary.

We now give a one variable interpretation of slice regularity in terms of partial slice regularity of the functions $\mathcal{S}^{\llbracket m\rrbracket}_K(f)$.

\begin{prop}
\label{Proposizione caratterizzazione slice regolarita con componenti ordinate}
Let $f\in\mathcal{S}(\Omega_D)$, then
$f\in\mathcal{S}\mathcal{R}(\Omega_D)$ if and only if $\mathcal{S}^{\llbracket m\rrbracket}_K(f)\in\mathcal{S}\mathcal{R}_{m+1}(\Omega_D)$, $\forall m=0,...,n-1$, $K\in\mathcal{P}(m)$. In that case we can write $\mathcal{S}^{\llbracket m\rrbracket}_K(f)$ as
\begin{equation}
\label{equazione scrittura componenti ordinate con CRF}
    \mathcal{S}^{\llbracket m\rrbracket}_K(f)=(-1)^m\overline{\partial}_{x_m}(x_m^{\chi_K(m)}\dots\overline{\partial}_{x_1}(x_1^{\chi_K(1)}f)\dots).
\end{equation}
\end{prop}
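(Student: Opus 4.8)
The plan is to prove Proposition~\ref{Proposizione caratterizzazione slice regolarita con componenti ordinate} by treating the two directions separately, using the ordered-interval case $H=\llbracket 1,m\rrbracket$ of the general machinery already established in Theorem~\ref{Teorema principale} and Corollary~\ref{Corollario Almansi ordinato}. First I would observe that the forward implication is essentially a special case of item (4) of Theorem~\ref{Teorema principale}: if $f\in\mathcal{S}\mathcal{R}(\Omega_D)$, then for $H=\llbracket 1,m\rrbracket$ we have $p=\min H^c=m+1$, so $\mathcal{S}^m_K(f)\in\mathcal{S}\mathcal{R}_{m+1}(\Omega_D)$ holds for every $m=0,\dots,n-1$ and every $K\in\mathcal{P}(m)$. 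The task here is mainly to check that the quantifiers match: the characterization in (4) ranges over all $H\in\mathcal{P}(n)$, and the integer intervals $\llbracket 1,m\rrbracket$ form a subfamily, so I must argue that restricting to this subfamily still captures slice regularity.

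For the converse, which is the substantive direction, I would proceed by induction or by a telescoping argument on $m$. The idea is that slice regularity of $f$ is equivalent to $\partial f/\partial x_h^c=0$ for every $h=1,\dots,n$ (by \cite[Proposition 3.13]{Several}), and I want to extract each of these $n$ vanishing conditions from the hypothesis that the $\mathcal{S}^m_K(f)$ are slice regular with respect to $x_{m+1}$. The key is the relation between $\mathcal{S}^m_K(f)$ and $\mathcal{S}^{m+1}_{K'}(f)$: by Definition~\ref{Definizione S} the passage from level $m$ to level $m+1$ applies a further partial spherical derivative in $x_{m+1}$ (optionally preceded by multiplication by $x_{m+1}$). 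Using the representation~(\ref{equazione rappresentazione slice}) in the variable $x_{m+1}$ together with~(\ref{Equazione relazione operatore crf e derivata sferica}), the condition $\mathcal{S}^m_K(f)\in\mathcal{S}\mathcal{R}_{m+1}(\Omega_D)$ should translate into $\partial/\partial x_{m+1}^c$ annihilating the appropriate component, and feeding this back through the inductive hypothesis (that lower-level components are already slice regular in the earlier variables) should yield $\partial f/\partial x_h^c=0$ for all $h$.

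The main obstacle will be the bookkeeping in this converse argument: I must carefully track how the spherical-derivative operators $(\cdot)'_{s,h}$ interact with the partial Cauchy-Riemann operators $\partial/\partial x_h^c$ across different variables. The relevant commutation facts are item (6) of Proposition~\ref{Proposizione richiami parte teorica} (that $f\in\ker(\partial/\partial x_t^c)$ implies $f'_{s,h}\in\ker(\partial/\partial x_t^c)$ for $t\neq h$) and the characterization~(\ref{Equazione caratterizzazione slice regular H}). The delicate point is that slice regularity in $x_{m+1}$ of $\mathcal{S}^m_K(f)$ alone does not immediately give regularity of $f$ in $x_{m+1}$; one must combine the hypotheses across all $K\in\mathcal{P}(m)$ and all lower levels, and show that the ordered decomposition~(\ref{Formula decomposizione di Almansi ordinata}) transports these conditions back to $f$ without loss. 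I expect the cleanest route is an induction on $m$ where the inductive hypothesis asserts $f\in\bigcap_{h=1}^{m}\ker(\partial/\partial x_h^c)$, and the inductive step uses the level-$m$ slice regularity to add $\ker(\partial/\partial x_{m+1}^c)$.

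Finally, for the formula~(\ref{equazione scrittura componenti ordinate con CRF}), I would argue that once $f$ is known to be slice regular, each intermediate function $x_j^{\chi_K(j)}\cdots$ to which we apply the next operator lies in $\mathcal{S}\mathcal{R}_j$ for the relevant $j$, so that by~(\ref{Equazione relazione operatore crf e derivata sferica}) the partial spherical derivative $(\cdot)'_{s,j}$ coincides with $-\overline{\partial}_{x_j}$ applied to that function. Unwinding the iterated definition $\mathcal{S}^m_K(f)=(x_m^{\chi_K(m)}\cdots(x_1^{\chi_K(1)}f)'_{s,1}\cdots)'_{s,m}$ and replacing each $(\cdot)'_{s,j}$ by $-\overline{\partial}_{x_j}$ then produces the claimed expression, with the sign $(-1)^m$ accounting for the $m$ applications of the operator. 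The only care needed is to verify at each stage that the function being differentiated is indeed slice regular in the current variable, which follows from the forward direction applied at the appropriate level.
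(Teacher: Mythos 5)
Your plan is essentially the paper's proof: the forward implication is exactly the specialization of Theorem~\ref{Teorema principale}~(4) to $H=\llbracket1,m\rrbracket$, the converse transports the hypothesis back to $f$ by applying $\partial/\partial x_{m+1}^c$ term by term to the ordered decomposition (\ref{Formula decomposizione di Almansi ordinata}) at level $m$, and (\ref{equazione scrittura componenti ordinate con CRF}) is obtained by iterating (\ref{Equazione relazione operatore crf e derivata sferica}) exactly as you describe. Two small streamlinings in the paper's version: no induction is needed, since (\ref{Formula decomposizione di Almansi ordinata}) holds for \emph{every} slice function, so the level-$m$ hypothesis alone yields $\partial f/\partial x_{m+1}^c=0$ for each $m$ independently; and the computational ingredient for that step is the Leibniz rule for $\partial/\partial x_{m+1}^c$ on the products $\overline{x}_{K^c}\,\mathcal{S}^m_K(f)$ (the paper cites \cite[(73)]{Several}) combined with $\partial\overline{x}_h/\partial x_k^c=0$, rather than Proposition~\ref{Proposizione richiami parte teorica}~(6), which concerns spherical derivatives preserving kernels and is the tool for the forward direction, not the converse.
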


\begin{oss}
    The previous characterization resembles the one given in \cite[Theorem 3.23]{Several}, in which iterations of spherical values and spherical derivatives (also referred as truncated spherical derivatives $\mathcal{D}_\epsilon(f)$) have been used. It is easy to see that truncated spherical derivatives can be expressed as real combinations of the components $\mathcal{S}^{\llbracket m\rrbracket}_K(f)$ and viceversa, making the two characterizations equivalent.
\end{oss}

\begin{ex}
\begin{enumerate}
    \item Let $f\in\mathcal{S}\mathcal{R}(\mathbb{H}^2)$, $f(x_1,x_2):=x_1 x_2$. We can give $2^2$ decompositions of $f$ for $H=\emptyset,\{1\},\{1,2\},\{2\}$: let $x=(\alpha_1+J_1\beta_1,\alpha_2+J_2\beta_2)$, then
\begin{equation*}
    \begin{split}
        f(x)&=\mathcal{S}^\emptyset_\emptyset(f)(x)=f(x)\\
        &=\mathcal{S}^{\llbracket1\rrbracket}_{\{1\}}(f)(x)-\overline{x}_1\mathcal{S}^{\llbracket1\rrbracket}_\emptyset(f)(x)=2\alpha_1x_2-\overline{x}_1 x_2\\
        &=\mathcal{S}^{\llbracket 2\rrbracket}_{\{1,2\}}(f)(x)-\overline{x}_1\mathcal{S}^{\llbracket 2\rrbracket}_{\{2\}}(f)(x)-\overline{x}_2\mathcal{S}^{\llbracket 2\rrbracket}_{\{1\}}(f)(x)+\overline{x}_1\overline{x}_2\mathcal{S}^{\llbracket 2\rrbracket}_\emptyset(f)(x)\\
        &\qquad=4\alpha_1\alpha_2-2\alpha_2\overline{x}_1-2\alpha_1\overline{x}_1+\overline{x}_1\overline{x}_2\\
        &=\mathcal{S}^{\{2\}}_{\{2\}}(f)(x)-\overline{x}_2\odot\mathcal{S}^{\{2\}}_\emptyset(f)(x)=2\alpha_2x_1-\overline{x}_2\odot x_1=2\alpha_2x_1-x_1\overline{x}_2.
    \end{split}
\end{equation*}
Note that in the first three decompositions the slice product is not needed. On the contrary, the last one, corresponding to $H=\{2\}$ needs the slice product. Moreover, $\mathcal{S}^\emptyset_\emptyset(f)=f\in\mathcal{S}\mathcal{R}_1(\Omega_D)$ and $\mathcal{S}^{\llbracket1\rrbracket}_\emptyset(f),\mathcal{S}^{\llbracket1\rrbracket}_{\{1\}}(f)\in\mathcal{S}\mathcal{R}_2(\Omega_D)$, as $f\in\mathcal{S}\mathcal{R}(\Omega_D)$.
\item Let $g\in\mathcal{S}\mathcal{R}(\mathbb{H}^3)$, $g(x_1,x_2,x_3)=e^{x_1}x_2x_3^3$. Now we have $2^3$ decompositions for $H\in\mathcal{P}(3)$. Let $x=(\alpha_1+J_1\beta_1,\alpha_2+J_2\beta_2,\alpha_3+J_3\beta_3)$, so aside from the trivial decomposition corresponding to $H=\emptyset$, we have the ordered decompositions for $H=\{1\},\{1,2\},\{1,2,3\}$
\begin{align*}
    f(x)&=\mathcal{S}^{\llbracket1\rrbracket}_{\{1\}}(f)(x)-\overline{x}_1\mathcal{S}^{\llbracket1\rrbracket}_\emptyset(f)(x)=e^{\alpha_1}(\cos\beta_1+\alpha_1/\beta_1\sin\beta_1)x_2x_3^3-\overline{x}_1e^{\alpha_1}\sin(\beta_1)/\beta_1x_2x_3^3\\
    &=\mathcal{S}^{\llbracket 2\rrbracket}_{\{1,2\}}(f)(x)-\overline{x}_1\mathcal{S}^{\llbracket 2\rrbracket}_{\{2\}}(f)(x)-\overline{x}_2\mathcal{S}^{\llbracket 2\rrbracket}_{\{1\}}(f)(x)+\overline{x}_1\overline{x}_2\mathcal{S}^{\llbracket 2\rrbracket}_\emptyset(f)(x)\\
    &\qquad=e^{\alpha_1}(\cos\beta_1+\alpha_1/\beta_1\sin\beta_1)2\alpha_2x_3^3-\overline{x}_1e^{\alpha_1}\sin(\beta_1)/\beta_12\alpha_2x_3^3+\\
    &\qquad-\overline{x}_2e^{\alpha_1}(\cos\beta_1+\alpha_1/\beta_1\sin\beta_1)x_3^3 +\overline{x}_1\overline{x}_2e^{\alpha_1}\sin(\beta_1)/\beta_1x_3^3\\
    &=\mathcal{S}^{\llbracket 3\rrbracket}_{\{1,2,3\}}(f)(x)-\overline{x}_1\mathcal{S}^{\llbracket 3\rrbracket}_{\{2,3\}}(f)(x)-\overline{x}_2\mathcal{S}^{\llbracket 3\rrbracket}_{\{1,3\}}(f)(x)-\overline{x}_3\mathcal{S}^{\llbracket 3\rrbracket}_{\{1,2\}}(f)(x)+\\
    &\qquad+\overline{x}_1\overline{x}_2\mathcal{S}^{\llbracket 3\rrbracket}_{\{3\}}(f)(x)+\overline{x}_1\overline{x}_3\mathcal{S}^{\llbracket 3\rrbracket}_{\{2\}}(f)(x)+\overline{x}_2\overline{x}_3\mathcal{S}^{\llbracket 3\rrbracket}_{\{1\}}(f)(x)-\overline{x}_1\overline{x}_2\overline{x}_3\mathcal{S}^{\llbracket 3\rrbracket}_\emptyset(f)(x)\\
    &\qquad =e^{\alpha_1}(\cos\beta_1+\alpha_1/\beta_1\sin\beta_1)2\alpha_24\alpha_3(\alpha_3^2-\beta_3^2)-\overline{x}_1e^{\alpha_1}\sin(\beta_1)/\beta_12\alpha_24\alpha_3(\alpha_3^2-\beta_3^2)+\\
    &\qquad-\overline{x}_2e^{\alpha_1}(\cos\beta_1+\alpha_1/\beta_1\sin\beta_1)4\alpha_3(\alpha_3^2-\beta_3^2)+\\
    &\qquad-\overline{x}_3e^{\alpha_1}(\cos\beta_1+\alpha_1/\beta_1\sin\beta_1)2\alpha_2(3\alpha_3^2-\beta_3^2)+\overline{x}_1\overline{x}_2e^{\alpha_1}\sin(\beta_1)/\beta_14\alpha_3(\alpha_3^2-\beta_3^2)+\\
    &\qquad+\overline{x}_1\overline{x}_3e^{\alpha_1}\sin(\beta_1)/\beta_12\alpha_2(3\alpha_3^2-\beta_3^2)+\overline{x}_2\overline{x}_3e^{\alpha_1}(\cos\beta_1+\alpha_1/\beta_1\sin\beta_1)(3\alpha_3^2-\beta_3^2)+\\
    &\qquad-\overline{x}_1\overline{x}_2\overline{x}_3e^{\alpha_1}\sin(\beta_1)/\beta_1(3\alpha_3^2-\beta_3^2)
    \end{align*}
    and the remaining decompositions for $H=\{2\},\{3\},\{1,3\},\{2,3\}$
    
    \begin{align*}
            f(x)&=\mathcal{S}^{\{2\}}_{\{2\}}(f)(x)-\overline{x}_2\odot \mathcal{S}^{\{2\}}_\emptyset(f)(x)=e^{x_1}2\alpha_2x_3^3-\overline{x}_2\odot e^{x_1}x_3^3\\
    &=\mathcal{S}^{\{3\}}_{\{3\}}(f)(x)-\overline{x}_3\odot \mathcal{S}^{\{3\}}_\emptyset(f)(x)=e^{x_1}x_24\alpha_3(\alpha_3^2-\beta_3^2)-\overline{x}_3\odot e^{x_1}x_2(3\alpha_3^2-\beta_3^2)\\
    &=\mathcal{S}^{\{1,3\}}_{\{1,3\}}(f)(x)-\overline{x}_1\odot \mathcal{S}^{\{1,3\}}_{\{3\}}(f)(x)-\overline{x}_3\odot \mathcal{S}^{\{1,3\}}_{\{1\}}(f)(x)+\overline{x}_1\overline{x}_3\odot \mathcal{S}^{\{1,3\}}_\emptyset(f)(x)\\
    &\qquad =e^{\alpha_1}(\cos\beta_1+\alpha_1/\beta_1\sin\beta_1)x_24\alpha_3(\alpha_3^2-\beta_3^2)-\overline{x}_1\odot e^{\alpha_1}\sin(\beta_1)/\beta_1x_24\alpha_3(\alpha_3^2-\beta_3^2)+\\
    &\qquad -\overline{x}_3\odot e^{\alpha_1}(\cos\beta_1+\alpha_1/\beta_1\sin\beta_1)x_2(3\alpha_3^2-\beta_3^2)+\overline{x}_1\overline{x}_3\odot e^{\alpha_1}\sin(\beta_1)/\beta_1x_2(3\alpha_3^2-\beta_3^2)\\
    &=\mathcal{S}^{\{2,3\}}_{\{2,3\}}(f)(x)-\overline{x}_2\odot \mathcal{S}^{\{2,3\}}_{\{3\}}(f)(x)-\overline{x}_3\odot \mathcal{S}^{\{2,3\}}_{\{2\}}(f)(x)+\overline{x}_2\overline{x}_3\odot \mathcal{S}^{\{2,3\}}_\emptyset(f)(x)\\
    &\qquad =e^{x_1}2\alpha_24\alpha_3(\alpha_3^2-\beta_3^2)-\overline{x}_2\odot e^{x_1}4\alpha_3(\alpha_3^2-\beta_3^2)-\overline{x}_3\odot e^{x_1}2\alpha_2(3\alpha_3^2-\beta_3^2)+\\
    &\qquad+\overline{x}_2\overline{x}_3\odot e^{x_1}(3\alpha_3^2-\beta_3^2).
\end{align*}
\end{enumerate}
\end{ex}

\subsection{Preliminary results}

\begin{lem}
\label{Lemma operazioni monomi}
For every $m=1,...,n$, it holds
\begin{enumerate}
    \item $(x_m)'_{s,h}=(Z_m)'_h=\delta_{h,m}$ and $(\overline{x}_m)'_{s,h}=(\overline{Z}_m)'_h=-\delta_{h,m}$, where $\delta_{i,j}$ is the Kronecker symbol and $\overline{Z}_m=\mathcal{I}^{-1}(\overline{x}_m)\in Stem(\mathbb{C}^n)$, $\overline{Z}_m(\alpha_1+i\beta_1,\dots,\alpha_n+i\beta_n)=\alpha_m-e_m\beta_m$;
    \item $(x_m)^\circ_{s,h}=x_m$, $(\overline{x}_m)^\circ_{s,h}=\overline{x}_m$, $(Z_m)^\circ_h=Z_m$, $(\overline{Z}_m)^\circ_h=\overline{Z}_m$ if $h\neq m$ and $(x_m)^\circ_{s,m}=(\overline{x}_m)^\circ_{s,m}=\operatorname{Re}(x_m)$, $(Z_m)^\circ_m=(\overline{Z}_m)^\circ_m=\operatorname{Re}(Z_m)$, where $\operatorname{Re}(Z_m)(\alpha_1+i\beta_1,\dots,\alpha_n+i\beta_n)=\alpha_m$ and $\operatorname{Re}(x_m)=\mathcal{I}(\operatorname{Re}(Z_m))$.
\end{enumerate}
\end{lem}

\begin{proof}
Since $x_m=\mathcal{I}(Z_m)$ and $\overline{x}_m=\mathcal{I}(\overline{Z}_m)$, it is enough to prove the properties for the slice funtions or for the stem functions. By Proposition \ref{Proposizione richiami parte teorica} (4), since $x_m,\overline{x}_m\in\mathcal{S}_{c,h}(\Omega_D)$, $\forall h\neq m$, immediately it holds $(x_m)^\circ_{s,h}=x_m$, $(\overline{x}_m)^\circ_{s,h}=\overline{x}_m$ and $(x_m)'_{s,h}=(\overline{x}_m)'_{s,h}=0$. Finally, by direct computation, $(Z_m)^\circ_m(z)=(\overline{Z}_m)^\circ_m(z)=\alpha_m=\operatorname{Re}(Z_m)(z)$, if $z=(\alpha_1+i\beta_1,\dots,\alpha_n+i\beta_n)$ and $(Z_m)'_m(z)=\operatorname{im}(z_m)^{-1}\operatorname{im}(z_m)=1$, $(\overline{Z}_m)'_m(z)=\operatorname{im}(z_m)^{-1}(-\operatorname{im}(z_m))=-1$.
\end{proof}

\begin{prop}
\label{Proposizione proprieta stem}
Let $F\in Stem(D)$ and fix $H\in\mathcal{P}(n)$. For every $K\subset H$, the functions $G^H_K(F)$ satisfy the following properties:
\begin{enumerate}
    \item For every $m\notin H$, it holds
    \begin{equation}
    \label{Proposizione proprieta stem eq 1}
        G^H_K(F)=G^{H\cup\{m\}}_{K\cup\{m\}}(F)-\overline{Z}_m\otimes G^{H\cup\{m\}}_K(F).
    \end{equation}
    \item For every $h\in H$ it holds
    \begin{equation*}
        G^H_K(F)=\left(Z_h^{\chi_K(h)}\otimes G^{H\setminus\{h\}}_{K\setminus\{h\}}(F)\right)'_h.
    \end{equation*}
    \item Explicitely, for $z=(z_1,...,z_n)\in D\setminus\mathbb{R}_H$, with $z_j=\alpha_j+i\beta_j$, we have 
    \begin{equation}
    \label{equazione esplicita delle componenti}
        G^H_K(F)(z)=\sum_{T\subset H^c}e_T\left(\displaystyle\sum_{L\subset K}\alpha_{K\setminus L}\beta^{-1}_{H\setminus L}F_{(T\cup H)\setminus L}(z)\right).
        \end{equation}
        \item If $F$ is holomorphic, then every $G^H_K(F)$ is $m$-holomorphic, $\forall m\notin H$.
        \end{enumerate}
\end{prop}

\begin{proof}
1. Apply (\ref{equazione rappresentazione stem}), (\ref{equazione formula di Leibniz stem}) and Lemma \ref{Lemma operazioni monomi}, then
\begin{equation*}
    \begin{split}
        &G^{H\cup\{m\}}_{K\cup\{m\}}(F)-\overline{Z}_m\otimes G^{H\cup\{m\}}_K(F)=\left(Z_{K\cup\{m\}}\otimes F\right)'_{H\cup\{m\}}-\overline{Z}_m\otimes\left(Z_K\otimes F\right)'_{H\cup\{m\}}=\\
        &=\left(\left(Z_{K\cup\{m\}}\otimes F\right)'_m\right)'_H-\overline{Z}_m\otimes\left(\left(Z_K\otimes F\right)'_m\right)'_H\\
        &=\left(\left(Z_{K\cup\{m\}}\right)'_m\otimes F^\circ_m+\left(Z_{K\cup\{m\}}\right)^\circ_m\otimes F'_m\right)'_H-\overline{Z}_m\otimes\left(\left(Z_K\right)'_m\otimes F^\circ_m+(Z_K)^\circ_m\otimes F'_m\right)'_H\\
        &=\left(Z_K\otimes F^\circ_m+(Z_m)^\circ_m\otimes Z_K\otimes F'_m\right)'_H-\overline{Z}_m\otimes\left(Z_K\otimes F'_m\right)'_H\\
        &=\left(Z_K\otimes F^\circ_m+\operatorname{Re}(Z_m)\otimes Z_K\otimes F'_m-\overline{Z}_m\otimes Z_K\otimes F'_m\right)'_H\\
        &=\left(Z_K\otimes\left(F^\circ_m+(\operatorname{Re}(Z_m)-\overline{Z}_m)\otimes F'_m\right)\right)'_H=\left(Z_K\otimes\left(F^\circ_m+\operatorname{Im}(Z_m)\otimes F'_m\right)\right)'_H\\
        &=\left(Z_K\otimes F\right)'_H=G^H_K(F).
    \end{split}
\end{equation*}
2. It follows immediately by definition of $G^H_K(F)$.\\
3. We procede by induction over $|H|$. Suppose first $|H|=1$, i.e. $H=\{h\}$ for some $h\in\{1,...,n\}$, then we have two components $G^{\{h\}}_\emptyset(F)$ and $G^{\{h\}}_{\{h\}}(F)$. Let us compute them explicitely: for any $z=(\alpha_1+i\beta_1,\dots,\alpha_n+i\beta_n)\in D\setminus\mathbb{R}_h$
\begin{equation*}
    G^{\{h\}}_\emptyset(F)(z)=F'_h(z):=\displaystyle\sum_{T\in\mathcal{P}(n),h\notin T}e_T\beta_h^{-1}F_{T\cup\{h\}}(z),
\end{equation*}
\begin{equation*}
    \begin{split}
        G^{\{h\}}_{\{h\}}(F)(z)&=\left(Z_h\otimes F\right)'_h(z)=\left(Z_h\right)'_h(z)\otimes F^\circ_h(z)+\left(Z_h\right)^\circ_h(z)\otimes F'_h(z)\\
        &=\displaystyle\sum_{T\in\mathcal{P}(n),h\notin T}e_TF_T(z)+\sum_{T\in\mathcal{P}(n),h\notin T}e_T\alpha_h\beta^{-1}_hF_{T\cup\{h\}}(z).
    \end{split}
\end{equation*}
Now, suppose that (\ref{equazione esplicita delle componenti}) holds for some $H\in\mathcal{P}(n)$ and let us prove it for $H'=H\cup\{m\}$, for any $m\notin H$. Suppose first $m\notin K$, then for $z=(z_1,...,z_n)\in D\setminus\mathbb{R}_H$, with $z_j=\alpha_j+i\beta_j$, $j=1,\dots,n$ we have
\begin{equation*}
    \begin{split}
        G^{H'}_K(F)(z)&=\left(\left(Z_K\otimes F\right)'_H\right)'_m(z)=\left(\sum_{T\subset H^c}e_T\left(\displaystyle\sum_{L\subset K}\alpha_{K\setminus L}\beta^{-1}_{H\setminus L}F_{(T\cup H)\setminus L}(z)\right)\right)'_m\\
        &=\beta_m^{-1}\displaystyle\sum_{T\subset H^c\setminus\{m\}}e_T\left(\displaystyle\sum_{L\subset K}\alpha_{K\setminus L}\beta^{-1}_{H\setminus L}F_{(T\cup H\cup\{m\})\setminus L}(z)\right)\\
        &=\displaystyle\sum_{T\subset(H\cup\{m\})^c}e_T\left(\displaystyle\sum_{L\subset K}\alpha_{K\setminus L}\beta^{-1}_{(H\cup\{m\})\setminus L}F_{(T\cup H\cup\{m\})\setminus L}(z)\right).
    \end{split}
\end{equation*}
Suppose now $m\in K$, then
\begin{equation*}
    \begin{split}
        &G^{H'}_K(F)(z)=\left(Z_m(z)\otimes\left(Z_{K\setminus\{m\}}\otimes F(z)\right)'_H\right)'_m\\
        &=\left(Z_m(z)\otimes\sum_{T\subset H^c}e_T\left(\displaystyle\sum_{L\subset (K\setminus\{m\})}\alpha_{(K\setminus\{m\})\setminus L}\beta^{-1}_{H\setminus L}F_{(T\cup H)\setminus L}(z)\right)\right)'_m\\
        &=\left(\sum_{T\subset H^c}e_T\left(\displaystyle\sum_{L\subset (K\setminus\{m\})}\alpha_{(K\setminus\{m\})\setminus L}\beta^{-1}_{H\setminus L}F_{(T\cup H)\setminus L}(z)\right)\right)^\circ_m+\\
        &+\alpha_m\left(\sum_{T\subset H^c}e_T\left(\displaystyle\sum_{L\subset (K\setminus\{m\})}\alpha_{(K\setminus\{m\})\setminus L}\beta^{-1}_{H\setminus L}F_{(T\cup H)\setminus L}(z)\right)\right)'_m\\
        &=\sum_{T\subset (H^c\setminus\{m\})}e_T\left(\displaystyle\sum_{L\subset (K\setminus\{m\})}\alpha_{(K\setminus\{m\})\setminus L}\beta^{-1}_{H\setminus L}F_{(T\cup H)\setminus L}(z)\right)+\\
        &+\beta_m^{-1}\alpha_m\sum_{T\subset (H^c)\setminus\{m\}}e_T\left(\displaystyle\sum_{L\subset (K\setminus\{m\})}\alpha_{(K\setminus\{m\})\setminus L}\beta^{-1}_{H\setminus L}F_{(T\cup H\cup\{m\})\setminus L}(z)\right)\\
        &=\sum_{T\subset (H\cup\{m\})^c}e_T\left(\displaystyle\sum_{L\subset (K\setminus\{m\})}\alpha_{K\setminus(\{m\}\cup L)}\beta^{-1}_{(H\cup\{m\}\setminus (L\cup\{m\})}F_{(T\cup H\cup\{m\})\setminus (L\cup\{m\})}(z)\right)+\\
        &+\sum_{T\subset(H\cup\{m\})^c}e_T\left(\displaystyle\sum_{L\subset (K\setminus\{m\})}\alpha_{K\setminus L}\beta^{-1}_{(H\cup\{m\})\setminus L}F_{(T\cup H\cup\{m\})\setminus L}(z)\right)\\
        &=\displaystyle\sum_{T\subset H'}e_T\left(\displaystyle\sum_{L\subset K}\alpha_{K\setminus L}\beta^{-1}_{H'\setminus L}F_{(T\cup H')\setminus L}(z)\right).
    \end{split}
\end{equation*}
4. $F$ and $Z_K$ are holomorphic, so is $Z_K\otimes F$. Finally, $G^H_K(F)=(Z_K\otimes F)'_H$ is holomorphic w.r.t. $z_m$ for every $m\notin H$, by (6) of Proposition \ref{Proposizione richiami parte teorica}.
\end{proof}




Next Lemma will be used to prove 5. of Theorem \ref{Teorema principale}.
\begin{lem}
For every $H\in\mathcal{P}(n)$, every $K\subset H$ and every $z=(\alpha_1+i\beta_1,\dots,\alpha_n+i\beta_n)\in D\setminus\mathbb{R}_H$, it holds
\begin{equation}
\label{formula generica per slice preserving}
    \beta_K^{-1}\displaystyle\sum_{T\subset H^c}e_TF_{K\cup T}(z)=\displaystyle\sum_{T\subset H\setminus K}(-1)^{|T|}\alpha_TG^H_{H\setminus(K\cup T)}(F)(z),
\end{equation}
where, if $H=\{1,...,n\}$ we mean
\begin{equation}
\label{formula finale per slice preserving}
    \beta_K^{-1}F_K(z)=\displaystyle\sum_{T\subset K^c}(-1)^{|T|}\alpha_TG^{\{1,...,n\}}_{(K\cup T)^c}(F)(z).
\end{equation}
\end{lem}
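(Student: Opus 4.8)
The plan is to substitute the explicit expression (\ref{equazione esplicita delle componenti}) for each $G^H_{H\setminus(K\cup T)}(F)$ into the right-hand side of (\ref{formula generica per slice preserving}) and then collapse the resulting multiple sum by a combinatorial cancellation. Both sides are expansions of the form $\sum_{S\subset H^c}e_S(\cdots)$, so it suffices to fix $S\subset H^c$ and compare the coefficients of $e_S$. On the left this coefficient is simply $\beta_K^{-1}F_{K\cup S}$, while on the right, writing $M:=H\setminus(K\cup T)$ and inserting (\ref{equazione esplicita delle componenti}), the coefficient of $e_S$ becomes
\begin{equation*}
    \sum_{T\subset H\setminus K}(-1)^{|T|}\alpha_T\sum_{L\subset M}\alpha_{M\setminus L}\beta_{H\setminus L}^{-1}F_{(S\cup H)\setminus L}.
\end{equation*}

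First I would reorganise this double sum by grouping terms according to the index of the quaternionic component that appears. Since $S\subset H^c$ and $L\subset H$, we have $(S\cup H)\setminus L=S\cup(H\setminus L)$, so setting $P:=H\setminus L$ (equivalently $L=H\setminus P$) each term carries the component $F_{S\cup P}$ with $P\subset H$. The constraint $L\subset M$ translates, by taking complements within $H$, into $K\cup T\subset P$; in particular only $P\supset K$ can contribute, and for such $P$ the admissible $T$ are exactly those with $T\subset P\setminus K$. A short computation gives $\beta_{H\setminus L}^{-1}=\beta_P^{-1}$ and $M\setminus L=P\setminus(K\cup T)=(P\setminus K)\setminus T$, so the total coefficient of $F_{S\cup P}$ equals
\begin{equation*}
    \beta_P^{-1}\sum_{T\subset P\setminus K}(-1)^{|T|}\alpha_T\,\alpha_{(P\setminus K)\setminus T}.
\end{equation*}

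The cancellation is now transparent. Writing $Q:=P\setminus K$, the identity $\alpha_T\alpha_{Q\setminus T}=\alpha_Q$, valid because $T$ and $Q\setminus T$ partition $Q$, factors $\alpha_Q$ out and leaves $\beta_P^{-1}\alpha_Q\sum_{T\subset Q}(-1)^{|T|}$. By the binomial identity $\sum_{T\subset Q}(-1)^{|T|}=(1-1)^{|Q|}$, this vanishes whenever $Q\neq\emptyset$, i.e.\ whenever $P\supsetneq K$, and equals $\beta_K^{-1}$ when $P=K$. Hence on the right-hand side only the summand $P=K$ survives, producing exactly $\beta_K^{-1}F_{K\cup S}$ and matching the left-hand side for every $S$; this proves (\ref{formula generica per slice preserving}). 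The special case (\ref{formula finale per slice preserving}) is then immediate: taking $H=\{1,\dots,n\}$ forces $H^c=\emptyset$, so the only surviving summand on the left of (\ref{formula generica per slice preserving}) is $F_K$, while $H\setminus K=K^c$ and $H\setminus(K\cup T)=(K\cup T)^c$ turn the right-hand side into the stated sum.

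I would expect the only genuine obstacle to lie in the bookkeeping of the regrouping step: one must handle the two independent subset sums over $T$ and $L$ simultaneously, track the reparametrisation $P=H\setminus L$, and verify carefully that the condition $L\subset M$ is equivalent to $K\cup T\subset P$ together with the identity $M\setminus L=(P\setminus K)\setminus T$. Once the coefficient of each $F_{S\cup P}$ has been isolated in the factored form above, the vanishing is an elementary consequence of the alternating binomial sum, so no further estimates or regularity hypotheses are needed.
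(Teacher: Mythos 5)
Your proposal is correct, but it follows a genuinely different route from the paper's. The paper proves (\ref{formula generica per slice preserving}) by induction on $|H|$, parallel to the inductive proof of the explicit expansion (\ref{equazione esplicita delle componenti}): the base case $H=\{h\}$ is settled via the Leibniz rule (\ref{equazione formula di Leibniz stem}) and Lemma \ref{Lemma operazioni monomi}, and the inductive step $H'=H\cup\{m\}$ splits into the cases $m\in K$ and $m\notin K$, applying $(\cdot)'_m$, respectively $(\cdot)^\circ_m$, to the inductive identity. You instead take Proposition \ref{Proposizione proprieta stem}(3) as a black box, substitute it into the right-hand side, and compare coefficients of $e_S$ for fixed $S\subset H^c$; your bookkeeping is sound: since $S\subset H^c$ and $L\subset H$ one indeed has $(S\cup H)\setminus L=S\cup(H\setminus L)$, the substitution $P=H\setminus L$ converts $L\subset H\setminus(K\cup T)$ into $K\cup T\subset P$ together with $M\setminus L=(P\setminus K)\setminus T$, and the coefficient of $F_{S\cup P}$ factors as $\beta_P^{-1}\alpha_{P\setminus K}\sum_{T\subset P\setminus K}(-1)^{|T|}$ — here you implicitly use that the $\alpha_t$ are real, hence commute, so $\alpha_T\alpha_{(P\setminus K)\setminus T}=\alpha_{P\setminus K}$ regardless of the ordering convention for indexed products — and the alternating binomial sum kills every $P\neq K$, leaving exactly $\beta_K^{-1}F_{K\cup S}$. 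Since Proposition \ref{Proposizione proprieta stem} is established in the paper before this lemma, there is no circularity. What your approach buys is transparency: it exhibits the lemma as an inclusion--exclusion inversion of the expansion (\ref{equazione esplicita delle componenti}) over the subset lattice, in one direct computation, whereas the paper's induction re-derives the same combinatorics step by step; conversely, the paper's route stays entirely within the calculus of spherical values and derivatives and avoids the double-sum reindexing. Your treatment of the degenerate case $H=\{1,\dots,n\}$, where $H^c=\emptyset$ forces the left side to reduce to $\beta_K^{-1}F_K$, matches the paper's reading of (\ref{formula finale per slice preserving}).
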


\begin{proof}
Let us proceed by induction over $|H|$. First, suppose $H=\{h\}$, for any $h=1,...,n$, then $K=\emptyset,\{h\}$. If $K=\emptyset$, we have
\begin{equation*}
    \begin{split}
        &\displaystyle\sum_{T\subset\{h\}}(-1)^{|T|}\alpha_TG^{\{h\}}_{\{h\}\setminus T}(F)(z)=G^{\{h\}}_{\{h\}}(F)(z)-\alpha_hG^{\{h\}}_\emptyset(F)(z)=
    \left(Z_h\otimes F\right)'_h(z)-\alpha_h F'_h(z)\\
    &=F^\circ_h(z)+\alpha_hF'_h(z)-\alpha_hF'_h(z)=\displaystyle\sum_{T\in\{h\}^c}e_TF_T(z),
    \end{split}
\end{equation*}
where (\ref{equazione formula di Leibniz stem}) and Lemma \ref{Lemma operazioni monomi} has been used. If $K=\{h\}$, immediately we get
\begin{equation*}
    G^{\{h\}}_\emptyset(F)(z)=F'_h(z)=\beta_h^{-1}\displaystyle\sum_{T\subset\{h\}^c}e_TF_{\{h\}\cup T}(z).
\end{equation*}
Now, suppose that (\ref{formula generica per slice preserving}) holds for some $H\in\mathcal{P}(n)$ and let us prove it for $H'=H\cup\{m\}$, with any $m\notin H$ and any $K\subset H'$. 
Let us split $m\in K$ and $m\notin K$. If $m\in K$, let us set $K':=K\setminus\{m\}$, then we have
\begin{equation*}
    \begin{split}
        &\displaystyle\sum_{T\subset H'\setminus K}(-1)^{|T|}\alpha_TG^{H'}_{H'\setminus(K\cup T)}(F)(z)=\displaystyle\sum_{T\subset H\setminus K'}(-1)^{|T|}\alpha_T\left(Z_{H\setminus(K'\cup T)}(z)\otimes F(z)\right)'_{H\cup\{m\}}\\
        &=\left(\displaystyle\sum_{T\subset H\setminus K'}(-1)^{|T|}\alpha_T\left(Z_{H\setminus(K'\cup T)}(z)\otimes F\right)'_H(z)\right)'_m=\left(\displaystyle\sum_{T\subset H\setminus K'}(-1)^{|T|}\alpha_TG^H_{H\setminus(K'\cup T)}(F)(z)\right)'_m\\
        &=\left(\beta_{K'}^{-1}\displaystyle\sum_{T\subset H^c}e_TF_{K'\cup T}(z)\right)'_m=\beta_K^{-1}\displaystyle\sum_{T\subset(H')^c}e_TF_{K\cup T}(z).
    \end{split}
\end{equation*}
Finally, if $m\notin K$
\begin{equation*}
    \begin{split}
        &\displaystyle\sum_{T\subset H'\setminus K}(-1)^{|T|}\alpha_TG^{H'}_{H'\setminus(K\cup T)}(F)(z)=\displaystyle\sum_{T\subset H\setminus K}(-1)^{|T|}\alpha_TG^{H'}_{H'\setminus(K\cup T)}(F)(z)+\\
        &\qquad-\displaystyle\sum_{T\subset H\setminus K}(-1)^{|T|}\alpha_m\alpha_TG^{H'}_{H\setminus(K\cup T)}(F)(z)\\
        &=\displaystyle\sum_{T\subset H\setminus K}(-1)^{|T|}\alpha_T\left(\left(Z_m(z)\otimes(Z_{H\setminus (K\cup T)}(z)\otimes F(z))\right)'_m\right)'_H-\displaystyle\sum_{T\subset H\setminus K}(-1)^{|T|}\alpha_m\alpha_T \left(Z_{H\setminus(K\cup T)}(z)\otimes F(z)\right)'_{H\cup m}\\
        &=\displaystyle\sum_{T\subset H\setminus K}(-1)^{|T|}\alpha_T\left(\left(Z_{H\setminus(K\cup T)}(z)\otimes F(z)\right)^\circ_m+\alpha_m\left(Z_{H\setminus(K\cup T)}(z)\otimes F(z)\right)'_m\right)'_H+\\
        &\qquad-\displaystyle\sum_{T\subset H\setminus K}(-1)^{|T|}\alpha_m\alpha_T \left(Z_{H\setminus(K\cup T)}(z)\otimes F(z)\right)'_{H\cup m}\\
        &=\left(\displaystyle\sum_{T\subset H\setminus K}(-1)^{|T|}\alpha_T G_{H\setminus(K\cup T)}^H(F)(z)\right)^\circ_m=\left(\beta_K^{-1}\displaystyle\sum_{T\subset H^c}e_TF_{K\cup T}(z)\right)^\circ_m=\beta_K^{-1}\displaystyle\sum_{T\subset (H')^c}e_TF_{K\cup T}(z).
    \end{split}
\end{equation*}
\end{proof}



\subsection{Proofs of main results}

\begin{proof}[Proof of Theorem \ref{Teorema principale}]
\begin{enumerate}
\item Let us prove that decomposition \eqref{Formula decomposizione di Almansi} holds for stem functions, too, namely that for any $H\in\mathcal{P}(n)$ we have
\begin{equation}
\label{Formula scomposizione per stem}
F=\displaystyle\sum_{K\subset H}(-1)^{|H\setminus K|}\overline{Z}_{H\setminus K}\otimes G^H_K(F).
\end{equation}
We proceed by induction over $|H|$. Suppose first $H=\{h\}$, for some $h=1,...,n$, then, for any $z=(\alpha_1+i\beta_1,\dots,\alpha_n+i\beta_n)\in D$ we have
\begin{equation*}
\begin{split}
    &G^{\{h\}}_{\{h\}}(F)(z)-\overline{Z}_h(z)\otimes G^{\{h\}}_\emptyset(F)(z)=(Z_h(z)\otimes F(z))'_h-\overline{Z}_h(z)\otimes F'_h(z)\\
    &=F^\circ_h(z)+\alpha_h F'_h(z)-\alpha_h F'_h(z)+\operatorname{Im}(Z_h)(z)\otimes F'_h(z)=F(z),
    \end{split}
\end{equation*}
by (\ref{equazione formula di Leibniz slice}), (\ref{equazione rappresentazione stem}) and Lemma \ref{Lemma operazioni monomi}.
Now, suppose (\ref{Formula scomposizione per stem}) holds for some $H\in\mathcal{P}(n)$, let us prove it for $H'=H\cup\{m\}$, with $m\notin H$. We have
\begin{equation*}
    \begin{split}
        &\displaystyle\sum_{K\subset H'}(-1)^{|H'\setminus K|}\overline{Z}_{H'\setminus K}(z)\otimes G^{H'}_K(F)(z)\\
        &=
    \displaystyle\sum_{K\subset H}(-1)^{|H\setminus K|}\overline{Z}_{H\setminus K}(z)\otimes G^{H\cup\{m\}}_{K\cup\{m\}}(F)(z)-\displaystyle\sum_{K\subset H}(-1)^{|H\setminus K|}\overline{Z}_{H\setminus K}(z)\otimes\overline{Z}_m(z)\otimes G^{H\cup\{m\}}_K(F)(z)\\
    &=\displaystyle\sum_{K\subset H}(-1)^{|H\setminus K|}\overline{Z}_{H\setminus K}(z)\otimes\left(G^{H\cup\{m\}}_{K\cup\{m\}}(F)(z)-\overline{Z}_m(z)\otimes G^{H\cup\{m\}}_K(F)(z)\right)\\
    &=\displaystyle\sum_{K\subset H}(-1)^{|H\setminus K|}\overline{Z}_{H\setminus K}(z)\otimes G^H_K(F)(z)=F(z),
    \end{split}
\end{equation*}
by \eqref{Proposizione proprieta stem eq 1} and the inductive hypothesis. Now \eqref{Formula decomposizione di Almansi} easily follows, indeed
\begin{equation*}
    \begin{split}
        f&=\mathcal{I}(F)=\mathcal{I}\left(\displaystyle\sum_{K\subset H}(-1)^{|H\setminus K|}\overline{Z}_{H\setminus K}\otimes G^H_K(F)\right)\\
        &=\displaystyle\sum_{K\subset H}(-1)^{|H\setminus K|}\mathcal{I}\left(\overline{Z}_{H\setminus K}\right)\odot\mathcal{I}\left(G^H_K(F)\right)=\displaystyle\sum_{K\subset H}(-1)^{|H\setminus K|}\left(\overline{x}\right)_{H\setminus K}\odot\mathcal{S}^H_K(f).
    \end{split}
\end{equation*}
\item For any $K\subset H$, $\mathcal{S}^H_K(f)=(x_K\odot f)'_{s,H}\in\mathcal{S}_{c,H}(\Omega_{D_H})\cap\mathcal{S}_p(\Omega_{D_H})$, by Proposition \ref{Proposizione richiami parte teorica} (3).
\item Write $\mathcal{S}^H_K(f)=\left(x_h^{\chi_K(h)}\odot\mathcal{S}^{H\setminus\{h\}}_{K\setminus\{h\}}(f)\right)'_{s,h}$. By hypothesis, $f\in\ker(\partial/\partial x_t^c)$, $\forall t=1,...,n$, then, by 6. of Proposition \ref{Proposizione richiami parte teorica}, $\mathcal{S}^{H\setminus\{h\}}_{K\setminus\{h\}}(f)\in\ker(\partial/\partial x_h^c)$ and thanks to Leibniz formula \cite[Proposition 3.25]{Several}, $x_h^{\chi_K(h)}\odot\mathcal{S}^{H\setminus\{h\}}_{K\setminus\{h\}}(f)\in\ker(\partial/\partial x_h^c)$. Finally, by Proposition \ref{Proposizione richiami parte teorica} 7. $\Delta_h\mathcal{S}^H_K(f)=\Delta_h\left(x_h^{\chi_K(h)}\odot\mathcal{S}^{H\setminus\{h\}}_{K\setminus\{h\}}(f)\right)'_{s,h}=0.$
\item $\Rightarrow$) By hypothesis, $x_K\odot f\in\mathcal{S}\mathcal{R}(\Omega_D)$, then $\mathcal{S}^H_K(f)=(x_K\odot f)'_{s,H}\in\ker(\partial/\partial x_t^c)$, for any $t\notin H$. In particular, $\mathcal{S}^H_K(f)=(x_K\odot f)'_{s,H}\in\ker(\partial/\partial x_p^c)\cap\mathcal{S}_p(\Omega_D)=\mathcal{S}\mathcal{R}_p(\Omega_D)$, by (\ref{Equazione caratterizzazione slice regular H}).
    
     $\Leftarrow$) It is a particular case of Proposition \ref{Proposizione caratterizzazione slice regolarita con componenti ordinate}.
    \item $f$ is slice preserving if and only if $F_K$ is real $\forall K\in\mathcal{P}(n)$, which by (\ref{formula finale per slice preserving}) is equivalent for $G^{\{1,...,n\}}_K(F)=\mathcal{S}^{\llbracket n\rrbracket}_K(f)$ to be real valued for every $K\in\mathcal{P}(n)$.
    

\end{enumerate}
\end{proof}

\begin{proof}[Proof of Proposition \ref{Proposizione unicità decomposizione}]
Apply Proposition \ref{Proposizione richiami parte teorica} (2) and the hypothesis $h_T\in\mathcal{S}_{c,H}(\Omega_D)$ in the following computation
    \begin{equation*}
        \begin{split}
            \mathcal{S}^H_K(f):&=(x_K\odot f)'_{s,H}=\left(x_K\odot \displaystyle\sum_{T\subset H}(-1)^{|H\setminus T|}\left(\overline{x}\right)_{H\setminus T}\odot h_T\right)'_{s,H}\\
            &=\displaystyle\sum_{T\subset H}(-1)^{|H\setminus T|}\left(x_K\odot\left(\overline{x}\right)_{H\setminus T}\odot h_T\right)'_{s,H}\\
            &=\displaystyle\sum_{T\subset H}(-1)^{|H\setminus T|}\left(x_K\odot\left(\overline{x}\right)_{H\setminus T}\right)'_{s,H}\odot h_T.
        \end{split}
    \end{equation*}
    Now we claim that $\left(x_K\odot\left(\overline{x}\right)_{H\setminus T}\right)'_{s,H}=(-1)^{|H\setminus K|}\delta_{K,T}$, which would reduce the prevoious equation to $\mathcal{S}^H_K(f)=h_K$. Suppose first that exists $ h\in T\setminus K\subset H$, then $x_K\odot\left(\overline{x}\right)_{H\setminus T}\in\mathcal{S}_{c,h}(\Omega_D)$, thus in particular $\left(x_K\odot\left(\overline{x}\right)_{H\setminus T}\right)'_{s,H}=0$. Viceversa, suppose $h\in K\setminus T\subset H$, but again $x_K\odot\left(\overline{x}\right)_{H\setminus T}\in\mathcal{S}_{c,h}(\Omega_D)$, indeed $x_K\odot\left(\overline{x}\right)_{H\setminus T}=x_h\overline{x}_h\odot x_{K\setminus\{h\}}\odot\left(\overline{x}\right)_{H\setminus (T\cup\{h\})}=(\alpha_h^2+\beta_h^2)x_{K\setminus\{h\}}\odot\left(\overline{x}\right)_{H\setminus (T\cup\{h\})}\in\mathcal{S}_{c,h}(\Omega_D)$. So, the unique non trivial element of the sum refers to $T=K$, for which we have
    \begin{equation*}
        \begin{split}
            \left(x_K\odot\left(\overline{x}\right)_{H\setminus K}\right)'_{s,H}&=\left(\left(x_K\odot\left(\overline{x}\right)_{H\setminus K}\right)'_{s,K}\right)'_{s,H\setminus K}=\left((x_K)'_{s,K}\odot\left(\overline{x}\right)_{H\setminus K}\right)'_{s,H\setminus K}\\
            &=\left(\left(\overline{x}\right)_{H\setminus K}\right)'_{s,H\setminus K}=(-1)^{|H\setminus K|},
        \end{split}
    \end{equation*}
    where we have used Proposition \ref{Proposizione richiami parte teorica} (2) and Lemma \ref{Lemma operazioni monomi} (1).
\end{proof}

\begin{proof}[Proof of Proposition \ref{Proposizione caratterizzazione slice regolarita con componenti ordinate}]
\item[$\implies$)] We have already proved in Theorem \ref{Teorema principale} (4).

\item[$\impliedby$)] For any $m\in\{1,...,n\}$ consider the ordered Almansi-type decomposition (\ref{Formula decomposizione di Almansi ordinata}) of $f$. 
Recall that $\partial \overline{x}_{h}/\partial x_{k}^c=0$, $\forall h,k=1,...,n$ and $\partial/\partial x_{m+1}^c(\mathcal{S}^{\llbracket m\rrbracket}_K(f))=0$, for every $K\in\mathcal{P}(n)$, so applying \cite[(73)]{Several} it holds
        \begin{equation*}
            \begin{split}
                \dfrac{\partial f}{\partial x_{m+1}^c}&=\dfrac{\partial}{\partial x_{m+1}^c}\left(\displaystyle\sum_{K\in\mathcal{P}(m)}(-1)^{|K^c|}\left(\overline{x}\right)_{K^c}\mathcal{S}^{\llbracket m\rrbracket}_K(f)\right)=\displaystyle\sum_{K\in\mathcal{P}(m)}(-1)^{|K^c|}\left(\overline{x}\right)_{K^c}\otimes\dfrac{\partial\mathcal{S}^{\llbracket m\rrbracket}_K(f)}{\partial x_{m+1}^c}=0.
            \end{split}
        \end{equation*}
       This proves the charaterization of $f\in\mathcal{S}\mathcal{R}(\Omega_D)$.
        Finally, (\ref{equazione scrittura componenti ordinate con CRF}) follows from (\ref{Equazione relazione operatore crf e derivata sferica}). 
\end{proof}

\section{Applications}
The first application we give of Theorem \ref{Teorema principale} concerns quaternionic (ordered) polynomials with right coefficients, in which the components of the decomposition are given through zonal harmonics. Note that Theorem \ref{Teorema principale} can be fully applied, since every polynomial with right coefficients is a slice regular function \cite[Proposition 3.14]{Several}.
Before, we recall a result from \cite[Corollary 6.7]{Harmonicity}.
\begin{lem}
\label{lemma derivata potenza zonali}
For every $m\geq 0$, consider the slice regular power $x^m:\mathbb{H}\rightarrow\mathbb{H}$. Then it holds
\begin{equation*}
        (x^m)'_s=\tilde{\mathcal{Z}}_{m-1}(x),
\end{equation*}
where 
\begin{equation*}
    \tilde{\mathcal{Z}}_k(x):=\left\{\begin{array}{ll}
         \dfrac{\mathcal{Z}_k(x,1)}{k+1}&\text{ if }k\geq0  \\
         0& \text{ if }k=-1
    \end{array}\right.
\end{equation*}
and $\mathcal{Z}_k(x,1)$ is the real valued zonal harmonic of $\mathbb{R}^4$ with pole 1 (see \cite[Ch. 5]{HFT}).
\end{lem}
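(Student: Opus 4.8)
The plan is to compute the spherical derivative of $x^m$ by hand and then recognise the resulting polynomial as the (rescaled) zonal harmonic through its defining symmetry properties. Writing $x=\alpha+J\beta$ with $J\in\mathbb{S}_\mathbb{H}$, $\beta\geq0$, and setting $z=\alpha+i\beta$, the slice regular power is induced by the stem function $z\mapsto z^m=\operatorname{Re}(z^m)+i\operatorname{Im}(z^m)$, so that $(x^m)^\circ_s=\operatorname{Re}(z^m)$ and $(x^m)'_s=\beta^{-1}\operatorname{Im}(z^m)$. In polar form $z=re^{i\theta}$, with $r=|x|$ and $\cos\theta=\alpha/r$, this yields
\begin{equation*}
    (x^m)'_s=\frac{r^m\sin(m\theta)}{r\sin\theta}=r^{m-1}U_{m-1}\!\left(\frac{\alpha}{|x|}\right),
\end{equation*}
where $U_{m-1}$ denotes the Chebyshev polynomial of the second kind. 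This already exhibits $(x^m)'_s$ as a real, homogeneous polynomial of degree $m-1$ on $\mathbb{R}^4\cong\mathbb{H}$; for $m=0$ it vanishes identically, consistently with $\tilde{\mathcal{Z}}_{-1}=0$.

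Next I would identify this polynomial structurally, to avoid comparing series coefficients termwise. Since $x^m$ is slice regular, Proposition \ref{Proposizione richiami parte teorica} (7) gives $\Delta(x^m)'_s=0$; since $x^m$ is slice preserving, $(x^m)'_s$ is real valued, and by Proposition \ref{Proposizione richiami parte teorica} (3) it is circular, hence constant on every sphere $\mathbb{S}_{\alpha,\beta}$ and therefore invariant under the rotations of $\mathbb{R}^4$ fixing the real axis $\mathbb{R}\cdot1$. Thus $(x^m)'_s$ is an axially symmetric, homogeneous, harmonic polynomial of degree $m-1$. The space of such polynomials is one dimensional and spanned by the zonal harmonic $\mathcal{Z}_{m-1}(\cdot,1)$ (see \cite[Ch.~5]{HFT}), so necessarily $(x^m)'_s=c_m\,\mathcal{Z}_{m-1}(\cdot,1)$ for a single real constant $c_m$.

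Finally I would pin down $c_m$ by evaluating on the real axis. From $\operatorname{Im}\!\big((\alpha+i\beta)^m\big)=m\alpha^{m-1}\beta+O(\beta^3)$ one reads off $(x^m)'_s(1)=m$, whereas the HFT normalization of the reproducing kernel gives $\mathcal{Z}_{m-1}(1,1)=\dim\mathcal{H}_{m-1}(\mathbb{R}^4)=m^2$; hence $c_m=1/m=1/\big((m-1)+1\big)$ and $(x^m)'_s=\tilde{\mathcal{Z}}_{m-1}$, as claimed. The only delicate point is this last bookkeeping: the value of $c_m$ is tied to the chosen normalization of $\mathcal{Z}_k$, so one must use consistently the convention $\mathcal{Z}_k(\eta,\eta)=\dim\mathcal{H}_k$. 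Alternatively, one can skip the abstract uniqueness step and match the explicit expression above directly against the degree-four Gegenbauer formula $\mathcal{Z}_k(x,1)=(k+1)|x|^k U_k(\alpha/|x|)$, whose normalization is checked at once by evaluating both sides at $x=1$.
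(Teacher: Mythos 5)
Your proposal is correct, but it is worth noting that the paper does not prove this lemma at all: it simply recalls it from \cite[Corollary 6.7]{Harmonicity}, so your argument supplies a self-contained derivation where the paper defers to the literature. Your computation is sound at every step: the stem function of $x^m$ is $z^m$, so $(x^m)'_s=\beta^{-1}\operatorname{Im}(z^m)=r^{m-1}U_{m-1}(\alpha/|x|)$, which is indeed a real homogeneous polynomial of degree $m-1$ on $\mathbb{R}^4$ (the odd/even parity of $U_{m-1}$ makes $r^{m-1}U_{m-1}(\alpha/r)$ polynomial in $\alpha$ and $r^2$); harmonicity follows from Proposition \ref{Proposizione richiami parte teorica} (7), real-valuedness from slice preservingness, and circularity from Proposition \ref{Proposizione richiami parte teorica} (3), so that constancy on the spheres $\mathbb{S}_{\alpha,\beta}$ is exactly invariance under the stabilizer of the pole $1$ in $SO(4)$, and the one-dimensionality of the space of such harmonic homogeneous polynomials is precisely the HFT characterization of zonal harmonics you invoke. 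The normalization bookkeeping is also right: $(x^m)'_s(1)=U_{m-1}(1)=m$ (note this evaluation uses the continuous extension of the spherical derivative to the real axis, legitimate since $x^m\in\mathcal{S}^1$), while $\mathcal{Z}_{m-1}(1,1)=\dim\mathcal{H}_{m-1}(\mathbb{R}^4)=m^2$, giving $c_m=1/m$ and hence $(x^m)'_s=\tilde{\mathcal{Z}}_{m-1}$, with the $m=0$ case handled separately as you do. Compared with the paper's citation, your route makes the mechanism visible — the Chebyshev/Gegenbauer identity $\mathcal{Z}_k(x,1)=(k+1)|x|^kU_k(\alpha/|x|)$ underlying the statement — at the cost of needing the HFT uniqueness theorem and care with the convention $\mathcal{Z}_k(\eta,\eta)=\dim\mathcal{H}_k$, which you correctly flag as the only delicate point.
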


\begin{prop}
\label{Proposizione componenti caso polinomi}
Let $P\in\mathbb{H}[X_1,...,X_n]$ be any quaternionic polynomial with right coefficients
\begin{equation*}
    P(x_1,...,x_n)=\displaystyle\sum_{k=1}^n\displaystyle\sum_{|\alpha|=k}x^\alpha a_\alpha=\displaystyle\sum_{k=1}^n\displaystyle\sum_{|\alpha|=k}x_1^{\alpha_1}... \ x_n^{\alpha_n} a_\alpha.
\end{equation*}
Then, for every $H\in\mathcal{P}(n)$ and $K\subset H$
\begin{equation}
\label{equazione componenti caso polinomi}
    \mathcal{S}^H_K(P)(x)=\displaystyle\sum_{k=1}^n\displaystyle\sum_{|\alpha|=k}\prod_{j\in H}\tilde{\mathcal{Z}}_{\alpha_j-1+\chi_K(j)}(x_j)\prod_{i\in H^c} x^{\alpha_i}_{i}a_\alpha,
\end{equation}
where $\prod_ix_i^{\alpha_i}$ is an ordered product.
\end{prop}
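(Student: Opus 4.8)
The plan is to reduce to a single monomial and then strip off one variable at a time using Lemma \ref{lemma derivata potenza zonali}. Since $\mathcal{S}^H_K$ is $\mathbb{H}$-right linear and $P$ is a finite sum $\sum_\alpha x^\alpha a_\alpha$, it suffices to establish the formula for one ordered monomial $P(x)=x^\alpha=x_1^{\alpha_1}\cdots x_n^{\alpha_n}$, the right coefficient $a_\alpha$ passing through unchanged; summing over $\alpha$ then yields (\ref{equazione componenti caso polinomi}).

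First I would rewrite $x_K\odot x^\alpha$. Each coordinate $x_j=\mathcal{I}(Z_j)$ is slice preserving, since the components of $Z_j$ are real valued; hence the $x_j$ are central for $\odot$ (immediate from the $\otimes$-product formula, as real scalars commute in $\mathbb{H}\otimes\mathbb{R}^{2^n}$) and, being an ordered product, the slice product coincides with the pointwise one by \cite[Proposition 2.52]{Several}. Therefore $x_K\odot x^\alpha=\prod_{j=1}^n x_j^{\alpha_j+\chi_K(j)}$, and because $K\subset H$ this factors as $\left(\prod_{h\in H}x_h^{\alpha_h+\chi_K(h)}\right)\odot\left(\prod_{i\in H^c}x_i^{\alpha_i}\right)$, the second factor being circular with respect to every $h\in H$, i.e. lying in $\mathcal{S}_{c,H}(\Omega_D)$.

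Then I would apply $(\cdot)'_{s,H}$. By the circular Leibniz rule of Proposition \ref{Proposizione richiami parte teorica} (2), the $\mathcal{S}_{c,H}$-factor exits the spherical derivative, leaving $\mathcal{S}^H_K(x^\alpha)=\left(\prod_{h\in H}x_h^{\alpha_h+\chi_K(h)}\right)'_{s,H}\odot\prod_{i\in H^c}x_i^{\alpha_i}$. Applying $(\cdot)'_{s,h}$ successively for $h\in H$, at each stage all remaining factors are circular in $x_h$, so the same Leibniz rule makes $(\cdot)'_{s,h}$ act only on $x_h^{\alpha_h+\chi_K(h)}$; the one-variable computation $(x_h^{\alpha_h+\chi_K(h)})'_{s,h}=\tilde{\mathcal{Z}}_{\alpha_h-1+\chi_K(h)}(x_h)$ of Lemma \ref{lemma derivata potenza zonali} then applies fiberwise, since the $x_h$-spherical derivative restricts to the one-variable spherical derivative on each fiber $f^y_h$. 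As each $\tilde{\mathcal{Z}}_{\alpha_h-1+\chi_K(h)}(x_h)$ is real valued and circular, it is inert under the subsequent spherical derivatives and commutes with every other factor, so collecting terms produces exactly $\prod_{h\in H}\tilde{\mathcal{Z}}_{\alpha_h-1+\chi_K(h)}(x_h)\prod_{i\in H^c}x_i^{\alpha_i}$.

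The routine steps are clear; the part needing the most care is the bookkeeping of orderings. One must verify that the slice and pointwise products genuinely agree for these ordered monomials, that the circular factors (both the frozen $H^c$-coordinates and the zonal harmonics already produced) may legitimately be pulled out of each $(\cdot)'_{s,h}$, and that the one-variable identity of Lemma \ref{lemma derivata potenza zonali} transfers to the $n$-variable spherical derivative with the other coordinates frozen. The centrality of slice preserving functions, together with the independence of $(\cdot)'_{s,H}$ on the order of $H$, is what renders all these rearrangements harmless.
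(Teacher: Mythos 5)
Your proposal is correct and follows essentially the same route as the paper: reduce to a single ordered monomial by right linearity, pull the $\mathcal{S}_{c,H}$-factors out of the spherical derivative via the Leibniz rule of Proposition \ref{Proposizione richiami parte teorica} (2), and apply Lemma \ref{lemma derivata potenza zonali} to each power $x_h^{\alpha_h+\chi_K(h)}$, using that the resulting zonal harmonics are real valued and circular. The only cosmetic difference is that the paper packages the variable-by-variable stripping as a formal induction over $|H|$ (splitting the cases $m\in K$ and $m\notin K$), while you present the same iteration directly, justified by the order-independence of $(\cdot)'_{s,H}$.
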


\begin{proof}
By linearity of the spherical derivative, we can assume without loss of generality \linebreak$P(x_1,...,x_n)=x^\alpha=x_1^{\alpha_1}...\ x_n^{\alpha_n}$. We will proceed by induction over $|H|$. Suppose $H=\{h\}$, for some $h=1,...,n$, then, since $x_1^{\alpha_1}... \ x_{h-1}^{\alpha_{h-1}} x_{h+1}^{\alpha_{h+1}}... \ x_n^{\alpha_n}\in\mathcal{S}_{c,h}(\Omega_D)$, we have
\begin{equation*}
    \begin{split}
        \mathcal{S}^{\{h\}}_\emptyset(P)&=(x^\alpha)'_{s,h}=(x_h^{\alpha_h}\odot x_1^{\alpha_1}... \ x_{h-1}^{\alpha_{h-1}} x_{h+1}^{\alpha_{h+1}}... \ x_n^{\alpha_n})'_{s,h}\\&=(x_h^{\alpha_h})'_{s,h}\odot x_1^{\alpha_1}...\ x_{h-1}^{\alpha_{h-1}} x_{h+1}^{\alpha_{h+1}}...\  x_n^{\alpha_n}=\tilde{\mathcal{Z}}_{\alpha_h-1}(x_h) x_1^{\alpha_1}...\ x_{h-1}^{\alpha_{h-1}} x_{h+1}^{\alpha_{h+1}}...\ x_n^{\alpha_n},
    \end{split}
\end{equation*}
where we have used Lemma \ref{lemma derivata potenza zonali}, Proposition \ref{Proposizione richiami parte teorica} (2) and that $\title{Z}_j$ is real valued. Similarly,
    \begin{equation*}
        \mathcal{S}^{\{h\}}_{\{h\}}(P)=(x_h\odot x^\alpha)'_{s,h}=(x_h^{\alpha_h+1}\odot x_1^{\alpha_1}...\ x_{h-1}^{\alpha_{h-1}} x_{h+1}^{\alpha_{h+1}}... \ x_n^{\alpha_n})'_{s,h}=\tilde{\mathcal{Z}}_{\alpha_h}(x_h) x_1^{\alpha_1}...\ x_{h-1}^{\alpha_{h-1}} x_{h+1}^{\alpha_{h+1}}...\ x_n^{\alpha_n}.
    \end{equation*}
     Now, suppose that (\ref{equazione componenti caso polinomi}) holds for some $H\in\mathcal{P}(n)$ and let us prove it for $H'=H\cup\{m\}$, for any $m\notin H$. Suppose first $m\notin K$, then, as before
    \begin{equation*}
        \begin{split}
            \mathcal{S}^{H'}_K(P)&=(\mathcal{S}^H_K(P))'_{s,m}=\left(\prod_{j\in H}\tilde{\mathcal{Z}}_{\alpha_j-1+\chi_K(j)}(x_j)\prod_{i\in H^c}x_i^{\alpha_i}\right)'_{s,m}\\
            &=\left(\prod_{j\in H}\tilde{\mathcal{Z}}_{\alpha_j-1+\chi_K(j)}(x_j)x_m^{\alpha_m}\odot\prod_{i\in (H')^c}x_i^{\alpha_i}\right)'_{s,m}\\
            &=\prod_{j\in H}\tilde{\mathcal{Z}}_{\alpha_j-1+\chi_K(j)}(x_j)\tilde{\mathcal{Z}}_{\alpha_m-1}(x_m)\prod_{i\in (H')^c}x_i^{\alpha_i}=\prod_{j\in H'}\tilde{\mathcal{Z}}_{\alpha_j-1+\chi_K(j)}(x_j)\prod_{i\in (H')^c}x_i^{\alpha_i}.
        \end{split}
    \end{equation*}
    If $m\in K$, let $K'=K\setminus\{m\}$, then
\begin{equation*}
    \begin{split}
        \mathcal{S}^{H'}_K(P)&=(x_m\odot\mathcal{S}^H_{K'}(P))'_{s,m}=\left(\prod_{j\in H}\tilde{\mathcal{Z}}_{\alpha_j-1+\chi_{K'}(j)}(x_j)x_m^{\alpha_m+1}\odot\prod_{i\in (H')^c}x_i^{\alpha_i}\right)'_{s,m}\\
        &=\prod_{j\in H}\tilde{\mathcal{Z}}_{\alpha_j-1+\chi_{K'}(j)}(x_j)\tilde{\mathcal{Z}}_{\alpha_m}(x_m)\prod_{i\in (H')^c}x_i^{\alpha_i}=\prod_{j\in H'}\tilde{\mathcal{Z}}_{\alpha_j-1+\chi_{K}(j)}(x_j)\prod_{i\in (H')^c}x_i^{\alpha_i}.
    \end{split}
\end{equation*}
\end{proof}

Let us examine the case in which $f$ is slice w.r.t. $x_h$.
\begin{prop}
    Let $f\in\mathcal{S}_h(\Omega_D)$ for some $h\in\{1,...,n\}$, then
    \begin{equation}
    \label{quasi tutte le componenti sono nulle se h slice}
    \mathcal{S}^{\llbracket h\rrbracket}_K(f)=0,\qquad\forall K\in\mathcal{P}(h-1), K\neq\{1,...,h-1\}. 
    \end{equation}
    In particular, the ordered decomposition of $f$ of order $h$ reduces to
    \begin{equation*}
        f=\sum_{K\in\mathcal{P}(h-1)}(-1)^{|K^c|}\overline{x}_{K^c}\mathcal{S}^{\llbracket h\rrbracket}_{K\cup\{h\}}(f)-\overline{x}_h\mathcal{S}^{\llbracket h\rrbracket}_{\{1,...,h-1\}}(f).
    \end{equation*}
\end{prop}

\begin{proof}
Assume $K\in\mathcal{P}(h-1)$, with $K\neq\{1,...,h-1\}$, then there exists $m\in\{1,...,h-1\}\setminus K$. Let $H:=\{m,h\}$, then by (5) of Proposition \ref{Proposizione richiami parte teorica} it holds $f'_{s,H}=0$. Since $K\cap H=\emptyset$, $x_K\in\mathcal{S}_{c,H}(\Omega_D)$, then by (2) of Proposition \ref{Proposizione richiami parte teorica}  we have
    \begin{equation*}
        \mathcal{S}^{\llbracket h\rrbracket}_K(f)=\left[(x_K\odot f)'_{s,H}\right]'_{s,\{1,...,h\}\setminus H}=(x_K\odot f'_{s,H})'_{s,\{1,...,h\}\setminus H}=0.
    \end{equation*}
This proves \eqref{quasi tutte le componenti sono nulle se h slice}. By this and Corollary \ref{Corollario Almansi ordinato} follows
    \begin{equation*}
        \begin{split}
            f&=\sum_{K\in\mathcal{P}(h)}(-1)^{|\{1,...,h\}\setminus K|}\overline{x}_{\{1,...,h\}\setminus K}\mathcal{S}^{\llbracket h\rrbracket}_{K}(f)=\sum_{K\in\mathcal{P}(h-1)}(-1)^{|\{1,...,h\}\setminus K|}\overline{x}_{\{1,...,h\}\setminus K}\mathcal{S}^{\llbracket h\rrbracket}_{K}(f)+\\
            &+\sum_{K\in\mathcal{P}(h-1)}(-1)^{|\{1,...,h\}\setminus (K\cup\{h\})|}\overline{x}_{\{1,...,h\}\setminus (K\cup\{h\})}\mathcal{S}^{\llbracket h\rrbracket}_{K\cup\{h\}}(f)\\
            &=-\overline{x}_h\mathcal{S}^{\llbracket h\rrbracket}_{\{1,...,h-1\}}(f)+\sum_{K\in\mathcal{P}(h-1)}(-1)^{|\{1,...,h-1\}\setminus K|}\overline{x}_{\{1,...,h-1\}\setminus K}\mathcal{S}^{\llbracket h\rrbracket}_{K\cup\{h\}}(f).
        \end{split}
    \end{equation*}
\end{proof}

The components of the ordered decomposition provide examples of axially monogenic functions.
\begin{prop}
\label{proposizione componenti ordinate monogeniche}
    Let $f\in\mathcal{S}\mathcal{R}(\Omega_D)$, $m=1,...,n-1$ and recall that $\mathcal{A}\mathcal{M}_m(\Omega_D)$ denotes the set of slice functions monogenic with respect to $x_m$. Then $\forall K\in\mathcal{P}(m)$, $\mathcal{S}^{\llbracket m\rrbracket}_K(f)$ satisfies
    \begin{enumerate}
        \item $\partial_{x_m}(\mathcal{S}^{\llbracket m\rrbracket}_K(f))\in\mathcal{A}\mathcal{M}_m(\Omega_D)$;
        \item $\Delta_{m+1}(\mathcal{S}^{\llbracket m\rrbracket}_K(f))\in\mathcal{A}\mathcal{M}_{m+1}(\Omega_D)$.
    \end{enumerate}
\end{prop}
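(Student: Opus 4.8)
The plan is to treat the two assertions in parallel, reading off from Theorem~\ref{Teorema principale} the three structural features of the components I will exploit: every $\mathcal{S}^m_K(f)$ is circular with respect to $x_1,\dots,x_m$ and harmonic in each of these variables (points (2) and (3), the latter using $f\in\mathcal{S}\mathcal{R}(\Omega_D)$), while it is slice regular with respect to $x_{m+1}$ (point (4)). The two workhorses will be the Laplacian factorization (\ref{equazione fattorizzazione laplaciano}) and the identity (\ref{Equazione relazione operatore crf e derivata sferica}), $\overline{\partial}_{x_h}\phi=-\phi'_{s,h}$, valid whenever $\phi\in\mathcal{S}\mathcal{R}_h$. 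Since $f$ is slice regular, hence smooth, each $\mathcal{S}^m_K(f)$ and its spherical derivatives extend to all of $\Omega_D$, so that both statements make sense there.

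The technical heart, which I expect to be the main obstacle, is the following observation on the Cauchy--Riemann--Fueter operator applied to circular functions: if $\phi\in\mathcal{S}(\Omega_D)$ is circular with respect to $x_l$, then $\partial_{x_l}\phi$ is again slice. Indeed, circularity means $\phi$ depends on $x_l$ only through $\alpha_l=\operatorname{Re}(x_l)$ and $\beta_l=|\operatorname{Im}(x_l)|$; writing $J_l=\operatorname{Im}(x_l)/\beta_l$ and differentiating in the real coordinates of $x_l$, the three imaginary derivatives collapse into a single radial one, giving
\begin{equation*}
    \partial_{x_l}\phi=\frac{1}{2}\left(\frac{\partial\phi}{\partial\alpha_l}-J_l\frac{\partial\phi}{\partial\beta_l}\right).
\end{equation*}
Since circularity makes $\phi$, equivalently the components of its inducing stem function, even in $\beta_l$, the coefficient of $1$ is even and that of $J_l$ is odd in $\beta_l$, which is exactly the parity making the right-hand side the induced function of a genuine stem function (it in fact coincides with the slice derivative $\partial\phi/\partial x_l$); in particular $\partial_{x_l}\phi\in\mathcal{S}(\Omega_D)$. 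The delicate part is precisely this bookkeeping that $\partial_{x_l}$ and the slice derivative agree on circular functions, which I would either verify directly on stem functions or extract from the one-variable relations in \cite{Parteteorica,Harmonicity}.

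For the first claim I apply this with $l=m$ to $\phi=\mathcal{S}^m_K(f)$, which is circular in $x_m$ by Theorem~\ref{Teorema principale} (2); hence $\partial_{x_m}\mathcal{S}^m_K(f)$ is slice. For monogenicity I use the factorization (\ref{equazione fattorizzazione laplaciano}) together with the harmonicity from Theorem~\ref{Teorema principale} (3):
\begin{equation*}
    \overline{\partial}_{x_m}\bigl(\partial_{x_m}\mathcal{S}^m_K(f)\bigr)=\tfrac{1}{4}\Delta_m\mathcal{S}^m_K(f)=0,
\end{equation*}
so that $\partial_{x_m}\mathcal{S}^m_K(f)\in\mathcal{A}\mathcal{M}_m(\Omega_D)$.

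For the second claim I work in the variable $x_{m+1}$, where $\mathcal{S}^m_K(f)\in\mathcal{S}\mathcal{R}_{m+1}(\Omega_D)$ by Theorem~\ref{Teorema principale} (4). Using $\Delta_{m+1}=4\partial_{x_{m+1}}\overline{\partial}_{x_{m+1}}$ and (\ref{Equazione relazione operatore crf e derivata sferica}) I rewrite
\begin{equation*}
    \Delta_{m+1}\mathcal{S}^m_K(f)=4\,\partial_{x_{m+1}}\overline{\partial}_{x_{m+1}}\mathcal{S}^m_K(f)=-4\,\partial_{x_{m+1}}\bigl((\mathcal{S}^m_K(f))'_{s,m+1}\bigr).
\end{equation*}
The function $(\mathcal{S}^m_K(f))'_{s,m+1}$ is circular in $x_{m+1}$ by Proposition~\ref{Proposizione richiami parte teorica} (3), so the observation above shows that $\partial_{x_{m+1}}$ of it, and therefore $\Delta_{m+1}\mathcal{S}^m_K(f)$, is slice. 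Finally, applying $\overline{\partial}_{x_{m+1}}$ and using the other factorization $\Delta_{m+1}=4\overline{\partial}_{x_{m+1}}\partial_{x_{m+1}}$ gives
\begin{equation*}
    \overline{\partial}_{x_{m+1}}\bigl(\Delta_{m+1}\mathcal{S}^m_K(f)\bigr)=-\Delta_{m+1}\bigl((\mathcal{S}^m_K(f))'_{s,m+1}\bigr)=0,
\end{equation*}
the last equality being Proposition~\ref{Proposizione richiami parte teorica} (7) applied to $\mathcal{S}^m_K(f)\in\ker(\partial/\partial x_{m+1}^c)$. Hence $\Delta_{m+1}\mathcal{S}^m_K(f)\in\mathcal{A}\mathcal{M}_{m+1}(\Omega_D)$, completing the argument.
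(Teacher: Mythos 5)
Your proof is correct, and its computational core is exactly the paper's: part (1) is $\overline{\partial}_{x_m}\partial_{x_m}\mathcal{S}^m_K(f)=\tfrac14\Delta_m\mathcal{S}^m_K(f)=0$ via the harmonicity in Corollary \ref{Corollario Almansi ordinato}, and part (2) reduces, through $\overline{\partial}_{x_{m+1}}\mathcal{S}^m_K(f)=-(\mathcal{S}^m_K(f))'_{s,m+1}=-\mathcal{S}^{m+1}_K(f)$ (legitimate since $\mathcal{S}^m_K(f)\in\mathcal{S}\mathcal{R}_{m+1}(\Omega_D)$), to the vanishing of $\Delta_{m+1}\mathcal{S}^{m+1}_K(f)$; your appeal to Proposition \ref{Proposizione richiami parte teorica}(7) and the paper's appeal to Corollary \ref{Corollario Almansi ordinato}(2) are the same fact. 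Where you go beyond the paper is in checking that the outputs are actually \emph{slice} functions, which membership in $\mathcal{A}\mathcal{M}_h(\Omega_D)$ requires and which the paper's proof leaves implicit; that extra care is welcome.

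One caution about your side lemma, though: as stated — circularity w.r.t.\ $x_l$ \emph{alone} implies $\partial_{x_l}\phi$ is slice — it is false for $n\geq2$. The pointwise identity $\partial_{x_l}\phi=\tfrac12\bigl(\partial_{\alpha_l}\phi-J_l\,\partial_{\beta_l}\phi\bigr)$ multiplies by $J_l$ on the \emph{left}, whereas the function induced by the stem derivative $\partial_l\Phi$ carries $J_l$ inserted in its ordered position inside the products $[J_T,\cdot]$; parity of the components is necessary but not sufficient, and the two agree only when $l<\min T$ for every $T$ in the support of the stem. For instance, $\phi(x)=\operatorname{Im}(x_1)\,|\operatorname{Im}(x_2)|^2$ is circular in $x_2$, yet $\partial_{x_2}\phi=-J_2J_1\beta_1\beta_2$ is not slice: since $J_2J_1=2\operatorname{Re}(J_1J_2)-J_1J_2$ and $J_1J_2\beta_1\beta_2=\mathcal{I}(e_{\{1,2\}}\beta_1\beta_2)$ is slice, sliceness would force $\operatorname{Re}(J_1J_2)$ to equal $a+J_1b+J_2c+J_1J_2d$ for fixed $a,b,c,d$ and all $J_1,J_2\in\mathbb{S}_\mathbb{H}$, which the choices $(J_1,J_2)=(i,i),(i,-i),(i,j)$ rule out. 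Your two applications are nevertheless safe: by Theorem \ref{Teorema principale}(2), $\mathcal{S}^m_K(f)$ and $\mathcal{S}^{m+1}_K(f)$ are circular in \emph{all} of $x_1,\dots,x_m$ (resp.\ $x_1,\dots,x_{m+1}$), so their stems are supported on $T\subset\{m+1,\dots,n\}$ (resp.\ $T\subset\{m+2,\dots,n\}$) and the left factor $J_m$ (resp.\ $J_{m+1}$) does coincide with ordered insertion. Restate the lemma with this stronger hypothesis (stem support in indices $>l$, e.g.\ circularity in all variables up to $x_l$) and your argument is complete.
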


\begin{proof}
    \begin{enumerate}
    By Corollary \ref{Corollario Almansi ordinato}, $\mathcal{S}^{\llbracket h\rrbracket}_K(f)$ is harmonic w.r.t. $x_j$, for any $j=1,...,h$, $\forall K\in\mathcal{P}(h)$, so
        \begin{equation*}
            \overline{\partial}_{x_m}\partial_{x_m}\left(\mathcal{S}^{\llbracket m\rrbracket}_K(f)\right)=\dfrac{1}{4}\Delta_m\left(\mathcal{S}^{\llbracket m\rrbracket}_K(f)\right)=0
        \end{equation*}
        and by Proposition \ref{Proposizione caratterizzazione slice regolarita con componenti ordinate}
        \begin{equation*}
            \overline{\partial}_{x_{m+1}}\Delta_{m+1}\left(\mathcal{S}^{\llbracket m\rrbracket}_K(f)\right)=\Delta_{m+1}\overline{\partial}_{x_{m+1}}\left(\mathcal{S}^{\llbracket m\rrbracket}_K(f)\right)=-\Delta_{m+1}\left(\mathcal{S}^{\llbracket m+1\rrbracket}_K(f)\right)=0.
        \end{equation*}
    \end{enumerate}
\end{proof}

The following result highlights a difference between the one and several variables slice regular functions: in the first case the Laplacian of a slice regular function is always an axially monogenic functions (this is Fueter's Theorem \cite{Fueter}), in the latter this happens only for the first variable. But
for any variable, we can at least write it as sum of axially monogenic functions.
\begin{lem}
\label{lemma laplaciano}
Let $m=1,...,n$ and let $f\in\mathcal{S}^1(\Omega_D)\cap\ker(\partial/\partial x_m^c)$, then it holds
\begin{equation*}
    \Delta_mf=-4\displaystyle\sum_{K\in\mathcal{P}(m-1)}(-1)^{|K^c|}\left(\overline{x}\right)_{K^c}\partial_{x_m}\left(\mathcal{S}^{\llbracket m\rrbracket}_K(f)\right).
\end{equation*}
\end{lem}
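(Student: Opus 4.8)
The plan is to apply the Laplacian $\Delta_m$ directly to the ordered Almansi decomposition of $f$ of order $m$ (Corollary \ref{Corollario Almansi ordinato}, whose part (1) needs only $f\in\mathcal{S}(\Omega_D)$), rather than first factoring $\Delta_m=4\partial_{x_m}\overline{\partial}_{x_m}$ through the relation \eqref{Equazione relazione operatore crf e derivata sferica}. The point is that $\Delta_m$ is a \emph{real} scalar second order operator, so it commutes with left multiplication by any $\mathbb{H}$-valued factor independent of $x_m$, whereas the Fueter operators $\partial_{x_m},\overline{\partial}_{x_m}$ do not, because of the quaternionic units they carry. So I would write
\begin{equation*}
f=\sum_{K\in\mathcal{P}(m)}(-1)^{|\{1,\dots,m\}\setminus K|}(\overline{x})_{\{1,\dots,m\}\setminus K}\,\mathcal{S}^m_K(f),
\end{equation*}
compute $\Delta_m f$ termwise and split according to whether $m\in K$ or $m\notin K$.

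The key preliminary step, and the main obstacle, is to show that every component is harmonic in the last variable, $\Delta_m\mathcal{S}^m_K(f)=0$ for all $K\subset\{1,\dots,m\}$, using only the weak hypothesis $f\in\ker(\partial/\partial x_m^c)$ and not full slice regularity (under which this would just be Corollary \ref{Corollario Almansi ordinato}(2)). To this end I would first note that $\ker(\partial/\partial x_m^c)$, being the image under $\mathcal{I}$ of the $m$-holomorphic stem functions, is closed under $\odot$ by the Leibniz rule, and that by Proposition \ref{Proposizione richiami parte teorica}(6) it is preserved by every spherical derivative $(\cdot)'_{s,j}$ with $j\neq m$. Since each monomial $x_j$ is slice regular, hence lies in $\ker(\partial/\partial x_m^c)$, it follows that $\mathcal{S}^{m-1}_{K\setminus\{m\}}(f)=(x_{K\setminus\{m\}}\odot f)'_{s,\{1,\dots,m-1\}}\in\ker(\partial/\partial x_m^c)$, and therefore so is $g:=x_m^{\chi_K(m)}\odot\mathcal{S}^{m-1}_{K\setminus\{m\}}(f)$. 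Writing $\mathcal{S}^m_K(f)=g'_{s,m}$ and invoking Proposition \ref{Proposizione richiami parte teorica}(7) then gives $\Delta_m\mathcal{S}^m_K(f)=\Delta_m g'_{s,m}=0$.

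With harmonicity in hand the computation is short. When $m\in K$ the factor $(\overline{x})_{\{1,\dots,m\}\setminus K}$ is independent of $x_m$, so $\Delta_m$ passes to the right and annihilates the now harmonic component, killing the term. When $m\notin K$ I would factor $(\overline{x})_{\{1,\dots,m\}\setminus K}=(\overline{x})_{\{1,\dots,m-1\}\setminus K}\,\overline{x}_m$ (legitimate since $m$ is the largest index, so $\overline{x}_m$ sits last), pull the $x_m$-independent prefactor out of $\Delta_m$, and use the Fueter-type identity
\begin{equation*}
\Delta_m(\overline{x}_m\,v)=2\sum_{\xi}(\partial_\xi\overline{x}_m)(\partial_\xi v)=4\,\partial_{x_m}v,
\end{equation*}
valid whenever $\Delta_m\overline{x}_m=0$ and $\Delta_m v=0$, with $v=\mathcal{S}^m_K(f)$ and $\xi$ ranging over the four real coordinates $\alpha_m,\beta_m,\gamma_m,\delta_m$ of $x_m$. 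Here the middle equality is the product rule for $\Delta_m$ (the two end terms vanish by harmonicity), and the last one follows from $\partial_{\alpha_m}\overline{x}_m=1$, $\partial_{\beta_m}\overline{x}_m=-i$, $\partial_{\gamma_m}\overline{x}_m=-j$, $\partial_{\delta_m}\overline{x}_m=-k$ together with the definition of $\partial_{x_m}$.

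Finally I would reindex the surviving sum by $K\in\mathcal{P}(m-1)$ and track the sign: since $|\{1,\dots,m\}\setminus K|=|\{1,\dots,m-1\}\setminus K|+1$ when $m\notin K$, each term picks up an extra factor $-1$, which combines with the $4$ produced by the identity above to give exactly
\begin{equation*}
\Delta_m f=-4\sum_{K\in\mathcal{P}(m-1)}(-1)^{|K^c|}(\overline{x})_{K^c}\,\partial_{x_m}\mathcal{S}^m_K(f),\qquad K^c=\{1,\dots,m-1\}\setminus K,
\end{equation*}
as claimed. I expect the only genuinely delicate point to be the harmonicity step, namely verifying that the weak hypothesis $f\in\ker(\partial/\partial x_m^c)$ already forces all components $\mathcal{S}^m_K(f)$ to be $x_m$-harmonic; once that is secured, the noncommutativity that would obstruct a computation via $\overline{\partial}_{x_m}f=-f'_{s,m}$ never appears, precisely because $\Delta_m$ carries no quaternionic units.
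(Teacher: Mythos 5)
Your proof is correct, and it takes a genuinely different route from the paper's. The paper starts from the ordered decomposition of order $m-1$, namely $f=\sum_{K\in\mathcal{P}(m-1)}(-1)^{|K^c|}(\overline{x})_{K^c}\,\mathcal{S}^{m-1}_K(f)$, whose prefactors are $x_m$-free, so the real operator $\Delta_m$ passes through them; it then shows via Proposition \ref{Proposizione richiami parte teorica} (3) and (6) together with (\ref{Equazione caratterizzazione slice regular H}) that each $\mathcal{S}^{m-1}_K(f)$ belongs to $\mathcal{S}\mathcal{R}_m(\Omega_D)$, so that the factorization (\ref{equazione fattorizzazione laplaciano}) and the relation (\ref{Equazione relazione operatore crf e derivata sferica}) give in one line $\Delta_m\mathcal{S}^{m-1}_K(f)=4\partial_{x_m}\overline{\partial}_{x_m}\mathcal{S}^{m-1}_K(f)=-4\partial_{x_m}\bigl(\mathcal{S}^{m-1}_K(f)\bigr)'_{s,m}=-4\partial_{x_m}\mathcal{S}^m_K(f)$. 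You instead expand $f$ at order $m$, prove $\Delta_m\mathcal{S}^m_K(f)=0$ for \emph{all} $K\subset\{1,\dots,m\}$ under the weak hypothesis $f\in\ker(\partial/\partial x_m^c)$ --- your chain through the Leibniz rule, Proposition \ref{Proposizione richiami parte teorica} (6) and (7) is sound, and is the same mechanism the paper uses to prove Theorem \ref{Teorema principale} (3) --- and then replace the operator-level step by the elementary pointwise identity $\Delta_m(\overline{x}_m v)=4\partial_{x_m}v$ for $x_m$-harmonic $v$, which you verify correctly (the left placement of the units in the cross terms matches the definition of $\partial_{x_m}$, and the sign bookkeeping from $|\{1,\dots,m\}\setminus K|=|K^c|+1$ is right). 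The two routes are reconciled by Proposition \ref{Proposizione proprieta stem} (1), whose slice version reads $\mathcal{S}^{m-1}_K(f)=\mathcal{S}^m_{K\cup\{m\}}(f)-\overline{x}_m\odot\mathcal{S}^m_K(f)$: your case split $m\in K$ versus $m\notin K$ recomputes exactly the paper's $\Delta_m\mathcal{S}^{m-1}_K(f)$ term by term. What your version buys is independence from (\ref{Equazione relazione operatore crf e derivata sferica}): you never need the components to be slice w.r.t.\ $x_m$, only harmonic, and the Fueter-type identity is checked by bare calculus; what the paper's version buys is brevity, since the relation $\overline{\partial}_{x_m}=-(\cdot)'_{s,m}$ on $\mathcal{S}\mathcal{R}_m$ absorbs your product-rule computation. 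One small caveat on your closing remark: the noncommutativity you fear does not actually obstruct the paper's route, because there too $\Delta_m$ (real) is the operator commuted past the $x_m$-free prefactors, and $\overline{\partial}_{x_m}$ is only ever applied to the components $\mathcal{S}^{m-1}_K(f)\in\mathcal{S}\mathcal{R}_m(\Omega_D)$, never to $f$ itself (which indeed need not be slice in $x_m$).
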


\begin{proof}
By (3) and (6) of Proposition \ref{Proposizione richiami parte teorica} it holds $\mathcal{S}^{\llbracket m-1\rrbracket}_K(f)=(x_K\odot f)'_{s,\{1,...,m-1\}}\in\ker(\partial/\partial x_m^c)\cap\mathcal{S}_m(\Omega_D)=\mathcal{S}\mathcal{R}_m(\Omega_D)$, $\forall K\in\mathcal{P}(m-1)$, then by (\ref{equazione fattorizzazione laplaciano}) and (\ref{Equazione relazione operatore crf e derivata sferica}) it holds
\begin{equation*}
    \Delta_m\mathcal{S}^{\llbracket m-1\rrbracket}_K(f)=4\partial_{x_m}\overline{\partial}_{x_m}\mathcal{S}^{\llbracket m-1\rrbracket}_K(f)=-4\partial_{x_m}\left[\left(\mathcal{S}^{\llbracket m-1\rrbracket}_K(f)\right)'_{s,m}\right]=-4\partial_{x_m}\left(\mathcal{S}^{\llbracket m\rrbracket}_K(f)\right),
\end{equation*}
with $\partial_{x_m}\left(\mathcal{S}^{\llbracket m\rrbracket}_K(f)\right)\in\mathcal{A}\mathcal{M}_m(\Omega_D)$, by Proposition \ref{proposizione componenti ordinate monogeniche}, 1.
So, applying (\ref{Formula decomposizione di Almansi ordinata}), we have
\begin{equation*}
    \begin{split}
        \Delta_m f&=\Delta_m\left(\displaystyle\sum_{K\in\mathcal{P}(m-1)}(-1)^{|K^c|}(\overline{x})_{K^c}\mathcal{S}^{\llbracket m-1\rrbracket}_K(f)\right)=\displaystyle\sum_{K\in\mathcal{P}(m-1)}(-1)^{|K^c|}(\overline{x})_{K^c}\Delta_m\mathcal{S}^{\llbracket m-1\rrbracket}_K(f)\\
        &=-4\displaystyle\sum_{K\in\mathcal{P}(m-1)}(-1)^{|K^c|}(\overline{x})_{K^c}\partial_{x_m}\left(\mathcal{S}^{\llbracket m\rrbracket}_K(f)\right).
    \end{split}
\end{equation*}

\end{proof}

The issue changes if we assume the function slice regular in that specific variable, as already proven in \cite[Theorem 4.9]{Parteteorica}, getting a generalization of Fueter's Theorem in several variables. We give another proof through the ordered decomposition of Corollary \ref{Corollario Almansi ordinato}.
\begin{cor}
\label{Corollario fueter theorem in several variables}
    Let $f\in\mathcal{S}\mathcal{R}_m(\Omega_D)$, then $\Delta_mf\in\mathcal{A}\mathcal{M}_m(\Omega_D)$.
\end{cor}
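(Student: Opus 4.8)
The plan is to deduce the statement directly from the apparatus already built for Lemma \ref{lemma laplaciano}, using the extra hypothesis $f\in\mathcal{S}\mathcal{R}_m(\Omega_D)$ to collapse the sum occurring there to a single term. First I would note that a function slice regular w.r.t. $x_m$ is in particular of class $\mathcal{S}^1$ and lies in $\ker(\partial/\partial x_m^c)$, so the hypotheses of Lemma \ref{lemma laplaciano} are satisfied and we may write
\begin{equation*}
    \Delta_m f=-4\displaystyle\sum_{K\in\mathcal{P}(m-1)}(-1)^{|K^c|}\left(\overline{x}\right)_{K^c}\partial_{x_m}\left(\mathcal{S}^m_K(f)\right).
\end{equation*}

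The key observation is then that $f\in\mathcal{S}\mathcal{R}_m(\Omega_D)\subset\mathcal{S}_m(\Omega_D)$, so Lemma \ref{lemma alcune componenti nulle se slice xh} applies with $h=m$ and forces $\mathcal{S}^m_K(f)=0$ for every $K\in\mathcal{P}(m-1)$ with $K\neq\{1,\dots,m-1\}$. Hence all summands vanish except the one indexed by $K=\{1,\dots,m-1\}$, for which $K^c=\emptyset$ and $(\overline{x})_{K^c}=1$. The decomposition therefore reduces to the single term
\begin{equation*}
    \Delta_m f=-4\,\partial_{x_m}\left(\mathcal{S}^m_{\{1,\dots,m-1\}}(f)\right).
\end{equation*}

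Finally I would invoke the identity already established inside the proof of Lemma \ref{lemma laplaciano}, namely $-4\partial_{x_m}(\mathcal{S}^m_K(f))=\Delta_m\mathcal{S}^{m-1}_K(f)\in\mathcal{A}\mathcal{M}_m(\Omega_D)$, applied to $K=\{1,\dots,m-1\}$. This gives $\Delta_m f=\Delta_m\mathcal{S}^{m-1}_{\{1,\dots,m-1\}}(f)\in\mathcal{A}\mathcal{M}_m(\Omega_D)$ and closes the argument.

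I do not expect a genuine obstacle: the entire content is the cancellation produced by Lemma \ref{lemma alcune componenti nulle se slice xh}, which is exactly what separates this slice-regular case from the general Lemma \ref{lemma laplaciano}, where $\Delta_m f$ is only a sum of axially monogenic pieces. The one point deserving care is justifying that the surviving term is genuinely axially monogenic and not merely monogenic: this rests on $\mathcal{S}^m_{\{1,\dots,m-1\}}(f)=(\mathcal{S}^{m-1}_{\{1,\dots,m-1\}}(f))'_{s,m}$ being circular w.r.t. $x_m$, so that $\partial_{x_m}$ returns a slice function, together with $4\overline{\partial}_{x_m}\partial_{x_m}=\Delta_m$ annihilating it via Proposition \ref{Proposizione richiami parte teorica}(7); both ingredients are already supplied by Lemma \ref{lemma laplaciano} and the preliminaries.
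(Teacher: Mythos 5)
Your proof is correct and follows essentially the same route as the paper's: Lemma \ref{lemma laplaciano} to expand $\Delta_m f$, Lemma \ref{lemma alcune componenti nulle se slice xh} with $h=m$ (via $\mathcal{S}\mathcal{R}_m(\Omega_D)\subset\mathcal{S}_m(\Omega_D)$) to kill every summand except $K=\{1,\dots,m-1\}$, leaving $\Delta_m f=-4\partial_{x_m}\left(\mathcal{S}^m_{\{1,\dots,m-1\}}(f)\right)\in\mathcal{A}\mathcal{M}_m(\Omega_D)$. If anything, your sourcing of the final step from the identity $\Delta_m\mathcal{S}^{m-1}_K(f)=-4\partial_{x_m}\left(\mathcal{S}^m_K(f)\right)\in\mathcal{A}\mathcal{M}_m(\Omega_D)$ established inside the proof of Lemma \ref{lemma laplaciano} is slightly more careful than the paper's citation of Proposition \ref{proposizione componenti ordinate monogeniche}, whose statement formally assumes $f\in\mathcal{S}\mathcal{R}(\Omega_D)$ rather than only $f\in\mathcal{S}\mathcal{R}_m(\Omega_D)$.
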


\begin{proof}
    By Proposition \ref{proposizione componenti ordinate monogeniche}, Lemma \ref{lemma laplaciano} and \eqref{quasi tutte le componenti sono nulle se h slice}  we get
    \begin{equation*}
        \Delta_mf=-4\displaystyle\sum_{K\in\mathcal{P}(m-1)}(-1)^{|K^c|}(\overline{x})_{K^c}\partial_{x_m}\left(\mathcal{S}^{\llbracket m\rrbracket}_K(f)\right)=-4\partial_{x_m}\left(\mathcal{S}^{\llbracket m\rrbracket}_{\{1,...,m-1\}}(f)\right)\in\mathcal{A}\mathcal{M}_m(\Omega_D).
    \end{equation*}
\end{proof}

\begin{cor}
\label{Corollario biarmonicita}
Every slice regular function is separately biharmonic in each variable.
\end{cor}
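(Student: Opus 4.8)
The goal is to show $\Delta_m^2 f = 0$ for every $f \in \mathcal{S}\mathcal{R}(\Omega_D)$ and every $m = 1,\dots,n$. The plan is to differentiate twice the explicit expression for $\Delta_m f$ furnished by Lemma \ref{lemma laplaciano}, exploiting that all the relevant operators act in the single variable $x_m$ with constant coefficients. Since a slice regular function is smooth and satisfies $\partial f/\partial x_m^c = 0$ by definition, it lies in $\mathcal{S}^1(\Omega_D)\cap\ker(\partial/\partial x_m^c)$, so Lemma \ref{lemma laplaciano} applies and gives
\begin{equation*}
    \Delta_m f = -4\sum_{K\in\mathcal{P}(m-1)}(-1)^{|K^c|}(\overline{x})_{K^c}\,\partial_{x_m}\big(\mathcal{S}^m_K(f)\big).
\end{equation*}

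The key structural observation I would record is that $K^c = \{1,\dots,m-1\}\setminus K$ never contains the index $m$, so each coefficient $(\overline{x})_{K^c}$ is independent of $x_m$ and therefore commutes with the real scalar operator $\Delta_m$; moreover $\Delta_m$ and $\partial_{x_m}$, being constant-coefficient differential operators in the same variable, commute with each other. Applying $\Delta_m$ to the displayed identity term by term then yields
\begin{equation*}
    \Delta_m^2 f = -4\sum_{K\in\mathcal{P}(m-1)}(-1)^{|K^c|}(\overline{x})_{K^c}\,\partial_{x_m}\Delta_m\big(\mathcal{S}^m_K(f)\big).
\end{equation*}
The decisive input is Corollary \ref{Corollario Almansi ordinato}(2): since $f$ is slice regular and $K\in\mathcal{P}(m-1)\subset\mathcal{P}(m)$, each component $\mathcal{S}^m_K(f)$ is harmonic in $x_m$, i.e. $\Delta_m\mathcal{S}^m_K(f)=0$. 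Hence every summand vanishes, $\Delta_m^2 f = 0$, and as $m$ is arbitrary $f$ is separately biharmonic in each variable.

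I expect the only delicate point to be the bookkeeping that the Laplacian passes through the quaternionic factors $(\overline{x})_{K^c}$ in spite of the noncommutativity of $\mathbb{H}$; this is harmless precisely because these factors do not depend on $x_m$, so left multiplication by them commutes with every real partial derivative entering $\Delta_m$. An equivalent and perhaps more conceptual route, already implicit in the proof of Lemma \ref{lemma laplaciano}, is to note that each $\Delta_m\mathcal{S}^{m-1}_K(f)=-4\partial_{x_m}\mathcal{S}^m_K(f)$ belongs to $\mathcal{A}\mathcal{M}_m(\Omega_D)$, hence is monogenic and a fortiori harmonic in $x_m$; thus the components of the order-$(m-1)$ decomposition (\ref{Formula decomposizione di Almansi ordinata}) are themselves biharmonic in $x_m$, and the biharmonicity of $f$ follows at once from the $x_m$-independence of the coefficients $(\overline{x})_{K^c}$.
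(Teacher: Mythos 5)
Your proof is correct and takes essentially the same route as the paper: both invoke Lemma \ref{lemma laplaciano} to write $\Delta_m f$ as $-4\sum_{K\in\mathcal{P}(m-1)}(-1)^{|K^c|}(\overline{x})_{K^c}\partial_{x_m}\left(\mathcal{S}^m_K(f)\right)$, pass $\Delta_m$ through the $x_m$-independent coefficients and past $\partial_{x_m}$, and conclude from the harmonicity $\Delta_m\mathcal{S}^m_K(f)=0$ given by Corollary \ref{Corollario Almansi ordinato}(2). Your explicit remark that the commutation with left multiplication by $(\overline{x})_{K^c}$ is legitimate despite the noncommutativity of $\mathbb{H}$ simply spells out a step the paper leaves implicit.
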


\begin{proof}
Let $f\in\mathcal{S}\mathcal{R}(\Omega_D)$, then thanks to Corollary \ref{Corollario Almansi ordinato} 2, Lemma \ref{lemma laplaciano} and (\ref{equazione fattorizzazione laplaciano}) we have
\begin{equation*}
\begin{split}
    \Delta_m^2f&=\Delta_m\left(-4\displaystyle\sum_{K\in\mathcal{P}(m-1)}(-1)^{|\{1,...,m-1\}\setminus K|}\overline{x}_{\{1,...,m-1\}\setminus K}\partial_{x_m}\left(\mathcal{S}^{\llbracket m\rrbracket}_K(f)\right)\right)\\
    &=-4\displaystyle\sum_{K\in\mathcal{P}(m-1)}(-1)^{|\{1,...,m-1\}\setminus K|}\overline{x}_{\{1,...,m-1\}\setminus K}\partial_{x_m}\left(\Delta_m\left(\mathcal{S}^{\llbracket m\rrbracket}_K(f)\right)\right)=0.
\end{split}
\end{equation*}
\end{proof}

\subsection{Mean value properties for slice regular functions}
\label{Sezione integrali}

\emph{Notation}: in the last part of the section, if $a=(a_1,...,a_n)\in\mathbb{H}^n$ and $\lambda_k\in\mathbb{H}$, for some $k=1,...,n$, we denote $a+\lambda_k:=(a_1,...,a_{k-1},a_k+\lambda_k,a_{k+1},...,a_n)$.
Let $\sigma$ be the surface measure of $\mathbb{S}^3=\partial\mathbb{B}^4\subset\mathbb{H}\cong\mathbb{R}^4$ such that $\sigma\left(\mathbb{S}^3\right)=1$, namely $\sigma(y):=\mathcal{H}^3(y)/\omega_3$, where $\mathcal{H}^3$ denotes the three-dimensional Hausdorff measure of $\mathbb{R}^4$ and $\omega_3:=\mathcal{H}^3\left(\mathbb{S}^3\right)=2\pi^2$. Again, by $\sigma^l$ we mean the $l$-th power of $\sigma$. 
 
 \emph{Assumption}: throughout the section we will always assume that $f\in\mathcal{S}\mathcal{R}(\Omega_D)$ is a slice regular function and for $a=(a_1,...,a_n)\in\Omega_D$ and $r_1,...,r_n\in\mathbb{R}^+\cup\{0\}$ it holds $\overline{B_{r_1}(a_1)}\times...\times \overline{B_{r_n}(a_n)}\subset\Omega_D$.

\begin{prop}
\label{prop mean value formula for components}
Let $f\in\mathcal{S}\mathcal{R}(\Omega_D)$, then for every $H\in\mathcal{P}(n)$ and $K\subset H$ it holds
\begin{equation}
\label{mean value formula for components}
    (\mathcal{S}^H_K(f))(a)=\int_{\left(\mathbb{S}^3\right)^{|H|}}\left(\mathcal{S}^H_K(f)\right)\left(a+\textstyle\sum_{h\in H}r_h\lambda_h\right)d\sigma^{|H|}(\lambda).
\end{equation}
\end{prop}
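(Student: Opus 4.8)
The plan is to exploit that, by part (3) of Theorem \ref{Teorema principale}, for $f\in\mathcal{S}\mathcal{R}(\Omega_D)$ the component $g:=\mathcal{S}^H_K(f)$ satisfies $\Delta_h g=0$ for every $h\in H$; that is, $g$ is \emph{separately} harmonic in the four real coordinates of each variable $x_h$, $h\in H$. (Although $\mathcal{S}^H_K(f)$ is a priori defined on $\Omega_{D_H}=\Omega_D\setminus\mathbb{R}_H$, the slice regularity of $f$ makes it $\mathcal{C}^1$, so the component extends to all of $\Omega_D$, as noted in Section 2.) Formula \eqref{mean value formula for components} is then simply the iterated classical mean value property for harmonic functions on $\mathbb{R}^4\cong\mathbb{H}$, applied one variable at a time and reassembled by Fubini's theorem.

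Concretely, I would enumerate $H=\{h_1,\dots,h_p\}$ and proceed by induction on $p=|H|$. For the single-variable step, fix all coordinates except $x_{h_j}$ and observe that the resulting restriction is an $\mathbb{H}$-valued harmonic function on the closed ball $\overline{B_{r_{h_j}}(a_{h_j})}\subset\mathbb{H}\cong\mathbb{R}^4$, whose closure lies in the domain by the standing assumption $\overline{B_{r_1}(a_1)}\times\cdots\times\overline{B_{r_n}(a_n)}\subset\Omega_D$. Applying the classical mean value property componentwise to each real entry of $g$, and writing it with the normalized surface measure $\sigma$ on $\mathbb{S}^3$ (so that $\sigma(\mathbb{S}^3)=1$), gives
\begin{equation*}
g(b)=\int_{\mathbb{S}^3}g\!\left(b+r_{h_j}\lambda_{h_j}\right)\,d\sigma(\lambda_{h_j})
\end{equation*}
for any base point $b$ whose $h_j$-th closed ball is contained in the domain.

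The key point is that separate harmonicity in $x_{h_{j+1}}$ is \emph{unaffected} by translating the $x_{h_j}$-coordinate, so after applying the mean value property in $x_{h_1}$ I may apply it again in $x_{h_2}$ to the inner integrand, and so on through $x_{h_p}$. Peeling off one sphere at each step and then recombining the $p$ nested integrals into a single integral over $(\mathbb{S}^3)^{p}$ via Fubini yields exactly \eqref{mean value formula for components}. The only delicate points are bookkeeping ones: checking that every intermediate evaluation point $a+\sum_{i\le j}r_{h_i}\lambda_{h_i}$ stays inside the product of closed balls (immediate, since each added term moves only one coordinate onto the boundary sphere of its own ball), and that Fubini applies, which is guaranteed by the continuity of $g$ on the compact product of closed balls. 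There is no genuine analytic obstacle; the content of the proposition is precisely the separate harmonicity provided by Theorem \ref{Teorema principale}.
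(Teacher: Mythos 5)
Your proposal is correct and takes essentially the same approach as the paper: separate harmonicity of $\mathcal{S}^H_K(f)$ in each variable $x_h$, $h\in H$ (Theorem \ref{Teorema principale}), followed by iterated application of the classical mean value formula, one variable at a time. The additional details you supply --- the induction on $|H|$, the Fubini recombination, the domain bookkeeping, and the $\mathcal{C}^1$ extension of the component to all of $\Omega_D$ --- are correct refinements of points the paper's two-line proof leaves implicit.
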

\begin{proof}
By Theorem \ref{Teorema principale}, every $\mathcal{S}^H_K(f)$ is separately harmonic w.r.t. $x_h$, for every $h\in H$, thus, we can apply the classical mean value formula for harmonic functions for such variables
\begin{equation*}
    \left(\mathcal{S}^H_K(f)\right)(a)=\int_{\mathbb{S}^3}\left(\mathcal{S}^H_K(f)\right)(a+r_h\lambda_h)d\sigma(\lambda_h),\qquad\forall h\in H.
\end{equation*}
Thus, (\ref{mean value formula for components}) follows applying the previous formula for any $h\in H$.
\end{proof}

\begin{prop}[First mean value formula]
\label{prop mean value formula k}
For any $m=1,...,n$ it holds
\begin{equation}
\label{Equazione formula integrale domini costanti}
\begin{split}
f(a)=\sum_{K\in\mathcal{P}(m)}(-1)^{|K^c|}\overline{a}_{K^c}\int_{\left(\mathbb{S}^3\right)^m}\mathcal{S}^{\llbracket m\rrbracket}_K(f)(a+\textstyle\sum_{i=1}^mr_m\lambda_m)d\sigma^m(\lambda).
\end{split}
\end{equation}
\end{prop}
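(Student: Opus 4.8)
The plan is to obtain the formula by feeding the componentwise mean value identity of Proposition \ref{prop mean value formula for components} into the ordered Almansi decomposition of Corollary \ref{Corollario Almansi ordinato}. Since the standing assumption of the section guarantees $\overline{B_{r_1}(a_1)}\times\cdots\times\overline{B_{r_n}(a_n)}\subset\Omega_D$, every evaluation below stays inside the domain and the cited results apply verbatim.

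First I would specialise the ordered decomposition (\ref{Formula decomposizione di Almansi ordinata}) to $H=\llbracket 1,m\rrbracket$ and evaluate it at the center $x=a$, obtaining
\[
f(a)=\sum_{K\in\mathcal{P}(m)}(-1)^{|K^c|}(\overline{a})_{K^c}\,\mathcal{S}^m_K(f)(a).
\]
Here the product is the ordinary pointwise one, as permitted in the ordered case, and each $(\overline{a})_{K^c}$ is a fixed quaternion, independent of any integration variable.

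Next I would invoke Proposition \ref{prop mean value formula for components} with $H=\llbracket 1,m\rrbracket$: since $f\in\mathcal{S}\mathcal{R}(\Omega_D)$, Theorem \ref{Teorema principale} (3) makes each $\mathcal{S}^m_K(f)$ separately harmonic in $x_1,\ldots,x_m$, so that
\[
\mathcal{S}^m_K(f)(a)=\int_{(\mathbb{S}^3)^m}\mathcal{S}^m_K(f)\Bigl(a+\textstyle\sum_{i=1}^m r_i\lambda_i\Bigr)\,d\sigma^m(\lambda).
\]
Substituting this expression into the previous display and pulling the constant factor $(\overline{a})_{K^c}$ to the left, out of the integral, yields exactly (\ref{Equazione formula integrale domini costanti}).

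There is no genuine obstacle here: the statement is a direct composition of two already established facts. The only points demanding care are that $(\overline{a})_{K^c}$ is evaluated at the fixed center and therefore commutes with integration (so it may legitimately be extracted on the left of the vector-valued integral), and that the polyradial mean value of Proposition \ref{prop mean value formula for components} is available precisely because, for slice regular $f$, the ordered components are harmonic in each of the first $m$ variables, which is what licenses iterating the classical mean value property in $x_1,\ldots,x_m$.
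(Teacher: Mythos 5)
Your proposal is correct and follows exactly the paper's own argument: evaluate the ordered decomposition (\ref{Formula decomposizione di Almansi ordinata}) at $x=a$ and substitute the componentwise mean value identity (\ref{mean value formula for components}) with $H=\{1,\dots,m\}$, pulling the constant factors $(\overline{a})_{K^c}$ out of the integral. Your added remarks on domain containment and on the ordinary pointwise product sufficing in the ordered case are accurate but not needed beyond what the paper states.
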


\begin{proof}
Apply (\ref{Formula decomposizione di Almansi ordinata}) and (\ref{mean value formula for components}) with $H=\{1,...,m\}$
\begin{equation*}
    f(a)=\displaystyle\sum_{K\in\mathcal{P}(m)}(-1)^{|K^c|}\overline{a}_{K^c}\mathcal{S}^{\llbracket m\rrbracket}_K(f)(a)=\sum_{K\in\mathcal{P}(m)}(-1)^{|K^c|}\overline{a}_{K^c}\int_{\left(\mathbb{S}^3\right)^m}\mathcal{S}^{\llbracket m\rrbracket}_K(f)(a+\textstyle\sum_{i=1}^mr_i\lambda_i)d\sigma^m(\lambda).
\end{equation*}
\end{proof}

We can give integral formulas through the general decomposition (\ref{Formula decomposizione di Almansi}), but we must assume that the centers of the spheres are real.
\begin{prop}
Let $H\in\mathcal{P}(n)$ and assume $a_h\in\mathbb{R}$, for any $h\in H$. Then
\begin{equation*}
    f(a)=\sum_{K\subset H}(-1)^{|H\setminus K|}a_{H\setminus K}\int_{(\mathbb{S})^{|H|}}\mathcal{S}^H_K(f)(a+\textstyle\sum_{h\in H}r_h\lambda_h)d\sigma^{|H|}(\lambda).
\end{equation*}
\end{prop}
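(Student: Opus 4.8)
The plan is to read off the formula by evaluating the general Almansi decomposition (\ref{Formula decomposizione di Almansi}) at the center $a$ and then replacing each component value by its mean value through the componentwise formula (\ref{mean value formula for components}). The role of the reality assumption $a_h\in\mathbb{R}$ for $h\in H$ is exactly to collapse the slice product in (\ref{Formula decomposizione di Almansi}) into an ordinary scalar multiplication at the point $a$; since $f\in\mathcal{S}\mathcal{R}(\Omega_D)$ is real analytic, hence lies in $\mathcal{S}^1(\Omega_D)$, every component $\mathcal{S}^H_K(f)$ extends to all of $\Omega_D$ and the evaluation at $a\in\mathbb{R}_H$ is legitimate, as is already implicit in Proposition \ref{prop mean value formula for components}.

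The crux is the factorization
\begin{equation*}
    \left((\overline{x})_{H\setminus K}\odot\mathcal{S}^H_K(f)\right)(a)=a_{H\setminus K}\,\mathcal{S}^H_K(f)(a),
\end{equation*}
which I would establish at the level of stem functions. Writing $\mathcal{S}^H_K(f)=\mathcal{I}(G^H_K(F))$ and $(\overline{x})_{H\setminus K}=\mathcal{I}(\overline{Z}_{H\setminus K})$, the left-hand side is induced by $\overline{Z}_{H\setminus K}\otimes G^H_K(F)$. Let $z$ be the point of $D$ corresponding to $a$, so that $\beta_h=0$ for every $h\in H$. Because $\overline{Z}_h(z)=\alpha_h-e_{\{h\}}\beta_h$, the vanishing of the $\beta_h$ reduces $\overline{Z}_h(z)$ to the real scalar $\alpha_h e_\emptyset=a_h e_\emptyset$, whence $\overline{Z}_{H\setminus K}(z)=a_{H\setminus K}\,e_\emptyset$. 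As $\otimes$ is computed pointwise in $z$ and $e_\emptyset$ is its unit, one gets $(\overline{Z}_{H\setminus K}\otimes G^H_K(F))(z)=a_{H\setminus K}\,G^H_K(F)(z)$, and applying $\mathcal{I}$ and evaluating at $a$ factors out the real scalar $a_{H\setminus K}$, giving the displayed identity.

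With the factorization in hand, evaluating (\ref{Formula decomposizione di Almansi}) at $a$ yields $f(a)=\sum_{K\subset H}(-1)^{|H\setminus K|}a_{H\setminus K}\,\mathcal{S}^H_K(f)(a)$, and substituting the mean value formula (\ref{mean value formula for components}) for each $\mathcal{S}^H_K(f)(a)$ produces precisely the asserted identity. The only genuine obstacle is the factorization step: outside $\mathbb{R}_H$ the slice product does not reduce to scalar multiplication, as stressed after Corollary \ref{Corollario Almansi ordinato}, so the reality of the centers $a_h$ is essential and not merely a convenience.
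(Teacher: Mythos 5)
Your proposal is correct and follows essentially the same route as the paper: evaluate the general decomposition (\ref{Formula decomposizione di Almansi}) at $a$, use $a_h\in\mathbb{R}$ for $h\in H$ to reduce $\left[\overline{x}_{H\setminus K}\odot\mathcal{S}^H_K(f)\right](a)$ to $a_{H\setminus K}\,\mathcal{S}^H_K(f)(a)$, and then insert (\ref{mean value formula for components}). Your stem-function justification of the factorization step (and the remark that $f\in\mathcal{S}^1(\Omega_D)$ makes evaluation on $\mathbb{R}_H$ legitimate) merely spells out what the paper compresses into ``where we have used that $a_h\in\mathbb{R}$, $\forall h\in H$.''
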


\begin{proof}
By (\ref{Formula decomposizione di Almansi}) and (\ref{mean value formula for components}) we have
\begin{equation*}
    \begin{split}
        f(a)&=\sum_{K\subset H}(-1)^{|H\setminus K|}\left[\overline{x}_{H\setminus K}\odot\mathcal{S}^H_K(f)\right](a)=\sum_{K\subset H}(-1)^{|H\setminus K|}a_{H\setminus K}\mathcal{S}^H_K(f)(a)=\\
    &=\sum_{K\subset H}(-1)^{|H\setminus K|}a_{H\setminus K}\int_{\left(\mathbb{S}^3\right)^{|H|}}\left(\mathcal{S}^H_K(f)\right)\left(a+\textstyle\sum_{h\in H}r_h\lambda_h\right)d\sigma^{|H|}(\lambda),
    \end{split}
\end{equation*}
where we have used that $a_h\in\mathbb{R}$, $\forall h\in H$.
\end{proof}

We give another integral formula through decomposition (\ref{Formula decomposizione di Almansi ordinata}). For $m\geq2$, it highly differs from (\ref{Equazione formula integrale domini costanti}), beacause of the components involved and the dimension of the domain of integration. On the contrary, they coincide if $m=1$.
\begin{prop}[Second mean value formula]
\label{prop mean value formula vuoto}
For any $m=1,...,n$ it holds
\begin{equation}
\label{Equazione prima formula della media}
\begin{split}
    f(a)&=\displaystyle\sum_{j=0}^{m-1}r_{\{1,...,j\}}\int_{\left(\mathbb{S}^3\right)^{j+1}}\overline{\lambda}_{\{1,...,j\}}\mathcal{S}^{\llbracket j\rrbracket}_\emptyset(f)(a+\textstyle\sum_{i=1}^{j+1}r_i\lambda_i)d\sigma^{j+1}(\lambda)+\\
    &+r_{\{1,...,m\}}\int_{\left(\mathbb{S}^3\right)^m}\overline{\lambda}_{\{1,...,m\}}\mathcal{S}^{\llbracket m\rrbracket}_\emptyset(f)(a+\textstyle\sum_{i=1}^mr_i\lambda_i)d\sigma^m(\lambda),
\end{split}
\end{equation}
where $r_\emptyset=\overline{\lambda}_\emptyset=1$.

\end{prop}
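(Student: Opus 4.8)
The plan is to prove \eqref{Equazione prima formula della media} by induction on $m$, reducing the whole statement to a single one‑variable mean value identity in the variable $x_{m+1}$ applied to the component $\mathcal{S}^m_\emptyset(f)$. The point is that the components appearing in \eqref{Equazione prima formula della media} form the chain $\mathcal{S}^{m+1}_\emptyset(f)=(\mathcal{S}^m_\emptyset(f))'_{s,m+1}$, so a clean recursion in the last variable should collapse the formula for $m+1$ onto the formula for $m$.

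First I would isolate the following \emph{one-variable step}: for every $m\geq 0$ and every admissible point $b$ (with $\overline{B_{r_{m+1}}(a_{m+1})}$ in the $(m+1)$-th slot inside the domain),
\begin{equation*}
    \mathcal{S}^m_\emptyset(f)(b)=\int_{\mathbb{S}^3}\mathcal{S}^m_\emptyset(f)(b+r_{m+1}\lambda_{m+1})\,d\sigma(\lambda_{m+1})+r_{m+1}\int_{\mathbb{S}^3}\overline{\lambda}_{m+1}\,\mathcal{S}^{m+1}_\emptyset(f)(b+r_{m+1}\lambda_{m+1})\,d\sigma(\lambda_{m+1}).
\end{equation*}
To prove it, set $g:=\mathcal{S}^m_\emptyset(f)$. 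By Proposition \ref{Proposizione caratterizzazione slice regolarita con componenti ordinate} we have $g\in\mathcal{S}\mathcal{R}_{m+1}(\Omega_D)$, so the decomposition \eqref{Formula decomposizione di Almansi} with $H=\{m+1\}$ applied to $g$ reads $g=\mathcal{S}^{m+1}_{\{m+1\}}(f)-\overline{x}_{m+1}\mathcal{S}^{m+1}_\emptyset(f)$; here I use Proposition \ref{Proposizione richiami parte teorica} (2) together with the circularity of $x_{m+1}$ in the first $m$ variables to identify $(x_{m+1}\odot g)'_{s,m+1}=\mathcal{S}^{m+1}_{\{m+1\}}(f)$ and $g'_{s,m+1}=\mathcal{S}^{m+1}_\emptyset(f)$. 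Both these components are harmonic with respect to $x_{m+1}$ by Corollary \ref{Corollario Almansi ordinato} (2). Evaluating the decomposition at $b+r_{m+1}\lambda_{m+1}$, expanding $\overline{b_{m+1}+r_{m+1}\lambda_{m+1}}=\overline{b}_{m+1}+r_{m+1}\overline{\lambda}_{m+1}$ and integrating over $\lambda_{m+1}\in\mathbb{S}^3$, the classical mean value property sends the two harmonic integrals to $\mathcal{S}^{m+1}_{\{m+1\}}(f)(b)-\overline{b}_{m+1}\mathcal{S}^{m+1}_\emptyset(f)(b)=g(b)$, and the remaining $\overline{\lambda}_{m+1}$-weighted term is exactly the one displayed; rearranging gives the step. (This is precisely the computation that settles $m=1$ directly.)

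Then I would run the induction, writing $\Phi_m$ for the right-hand side of \eqref{Equazione prima formula della media}. The base case $m=1$ is the one-variable step with $m=0$ and $b=a$, since $\mathcal{S}^0_\emptyset(f)=f$. For the inductive step I observe that each summand of the finite sum in $\Phi_m$ depends only on its index $j$, so that $\Phi_{m+1}-\Phi_m=r_{\{1,\dots,m\}}\bigl(I_m+r_{m+1}J^{(m+1)}-J^{(m)}\bigr)$, where $J^{(m)}$ is the last integral of $\Phi_m$, $J^{(m+1)}$ the last integral of $\Phi_{m+1}$, and $I_m$ the $j=m$ summand of $\Phi_{m+1}$. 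Using $\overline{\lambda}_{\{1,\dots,m+1\}}=\overline{\lambda}_{\{1,\dots,m\}}\,\overline{\lambda}_{m+1}$, pulling the factor $\overline{\lambda}_{\{1,\dots,m\}}$ (constant in $\lambda_{m+1}$) to the left and integrating $\lambda_{m+1}$ first by Fubini, the bracket $I_m+r_{m+1}J^{(m+1)}$ becomes $\int_{(\mathbb{S}^3)^m}\overline{\lambda}_{\{1,\dots,m\}}\,[\,\cdots\,]\,d\sigma^m(\lambda)$ whose inner integrand, by the one-variable step applied with $b=a+\sum_{i=1}^m r_i\lambda_i$ (note $b_{m+1}=a_{m+1}$), equals $\mathcal{S}^m_\emptyset(f)(b)$. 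Hence this bracket equals $J^{(m)}$, so $\Phi_{m+1}=\Phi_m$, and by the inductive hypothesis $\Phi_{m+1}=f(a)$.

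The main obstacle is bookkeeping rather than analysis: I must keep the ordered, noncommutative products $\overline{\lambda}_{\{1,\dots,m\}}$ strictly to the left so that they factor out of the inner $\lambda_{m+1}$-integral, replace $\odot$ by the pointwise product only where legitimate (the factors $\overline{x}_{m+1}$ multiply components that are circular in $x_{m+1}$, by \cite[Proposition 2.52]{Several}), and track that the sphere in $\lambda_{m+1}$ moves only the $(m+1)$-th coordinate. The one genuinely analytic ingredient is that the collapse in the one-variable step needs \emph{harmonicity} of the two Almansi components in $x_{m+1}$ (Corollary \ref{Corollario Almansi ordinato} (2)), not merely their circularity; this is what makes the extra $\overline{b}_{m+1}\mathcal{S}^{m+1}_\emptyset(f)(b)$ term cancel and leaves only the $\overline{\lambda}_{m+1}$-weighted remainder.
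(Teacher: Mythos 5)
Your proof is correct and takes essentially the same route as the paper: induction on $m$, with the formula for $m+1$ collapsing onto the one for $m$ via the one-variable mean value identity in $x_{m+1}$ applied to $\mathcal{S}^m_\emptyset(f)\in\mathcal{S}\mathcal{R}_{m+1}(\Omega_D)$ at the shifted center $b=a+\sum_{i=1}^m r_i\lambda_i$. The only difference is that the paper invokes that one-variable step as a black box from \cite[Proposition 2]{Almansi}, while you re-derive it inline from the $H=\{m+1\}$ Almansi decomposition of $\mathcal{S}^m_\emptyset(f)$ plus the harmonicity of its components and the classical mean value property — a self-contained but equivalent argument.
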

\begin{proof}
We prove the identity by induction over $m$, using the corresponding one-variable formula \cite[Proposition 2]{Almansi} that we can apply iteratively, since $\mathcal{S}^{\llbracket m\rrbracket}_K(f)\in\mathcal{S}\mathcal{R}_{m+1}(\Omega_D)$. For $m=1$, (\ref{Equazione prima formula della media}) is precisely \cite[Proposition 2]{Almansi}, indeed
\begin{equation*}
\begin{split}
    f(a)&=\int_{\mathbb{S}^3}f(a+r_1\lambda_1)d\sigma(\lambda_1)+r_1\int_{\mathbb{S}^3}\overline{\lambda}_1\left(\mathcal{S}^{\llbracket1\rrbracket}_\emptyset(f)\right)(a+r_1\lambda_1)d\sigma(\lambda_1)\\
    &=\int_{\mathbb{S}^3}f(a+r_1\lambda_1)d\sigma(\lambda_1)+r_1\int_{\mathbb{S}^3}\overline{\lambda}_1f'_{s,1}(a+r_1\lambda_1)d\sigma(\lambda_1).
\end{split}
\end{equation*}
{\allowdisplaybreaks
Now, suppose that the formula holds for some $m$, then, apply \cite[Proposition 2]{Almansi} to $\mathcal{S}^{\llbracket m\rrbracket}_\emptyset(f)^{\tilde{a}}_{m+1}\in\mathcal{S}\mathcal{R}(\Omega_{D,m+1}(\tilde{a}))$, where $\tilde{a}=a+\textstyle\sum_{i=1}^mr_i\lambda_i$:

\begin{equation*}
    \begin{split}
    f(a)&=\displaystyle\sum_{j=0}^{m-1}r_{\{1,...,j\}}\int_{\left(\mathbb{S}^3\right)^{j+1}}\overline{\lambda}_{\{1,...,j\}}\mathcal{S}^{\llbracket j\rrbracket}_\emptyset(f)(a+\textstyle\sum_{i=1}^{j+1}r_i\lambda_i)d\sigma^{j+1}(\lambda)+\\
    &+r_{\{1,...,m\}}\int_{\left(\mathbb{S}^3\right)^m}\overline{\lambda}_{\{1,...,m\}}\mathcal{S}^{\llbracket m\rrbracket}_\emptyset(f)(a+\textstyle\sum_{i=1}^{m}r_i\lambda_i)d\sigma^m(\lambda)\\
    &=\displaystyle\sum_{j=0}^{m-1}r_{\{1,...,j\}}\int_{\left(\mathbb{S}^3\right)^{j+1}}\overline{\lambda}_{\{1,...,j\}}\mathcal{S}^{\llbracket j\rrbracket}_\emptyset(f)(a+\textstyle\sum_{i=1}^{j+1}r_i\lambda_i)d\sigma^{j+1}(\lambda)+\\
        &+r_{\{1,...,m\}}\int_{\left(\mathbb{S}^3\right)^m}\overline{\lambda}_{\{1,...,m\}}\left[\int_{\mathbb{S}^3}\mathcal{S}^{\llbracket m\rrbracket}_\emptyset(f)(a+\textstyle\sum_{i=1}^{m+1}r_i\lambda_i)d\sigma(\lambda_{m+1})\right]d\sigma^m(\lambda)+\\
        &+r_{\{1,...,m\}}\int_{\left(\mathbb{S}^3\right)^m}\overline{\lambda}_{\{1,...,m\}}\left[r_{m+1}\int_{\mathbb{S}^3}\overline{\lambda}_{m+1}\mathcal{S}^{\llbracket m+1\rrbracket}_\emptyset(f)(a+\textstyle\sum_{i=1}^{m+1}r_i\lambda_i)d\sigma(\lambda_{m+1})\right]d\sigma^m(\lambda)
        \end{split}
        \end{equation*}
        \begin{equation*}
            \begin{split}
    &=\displaystyle\sum_{j=0}^{m-1}r_{\{1,...,j\}}\int_{\left(\mathbb{S}^3\right)^{j+1}}\overline{\lambda}_{\{1,...,j\}}\mathcal{S}^{\llbracket j\rrbracket}_\emptyset(f)(a+\textstyle\sum_{i=1}^{j+1}r_i\lambda_i)d\sigma^{j+1}(\lambda)+\\
        +&r_{\{1,...,m\}}\int_{\left(\mathbb{S}^3\right)^{m+1}}\overline{\lambda}_{\{1,...,m\}}\mathcal{S}^{\llbracket m\rrbracket}_\emptyset(f)(a+\textstyle\sum_{i=1}^{m+1}r_i\lambda_i)d\sigma^{m+1}(\lambda)+\\
        &+r_{\{1,...,m+1\}}\int_{\left(\mathbb{S}^3\right)^{m+1}}\overline{\lambda}_{\{1,...,m+1\}}\mathcal{S}^{\llbracket m+1\rrbracket}_\emptyset(f)(a+\textstyle\sum_{i=1}^{m+1}r_i\lambda_i)d\sigma^{m+1}(\lambda)\\
        &=\sum_{j=0}^{m}r_{\{1,...,j\}}\int_{\left(\mathbb{S}^3\right)^{j+1}}\overline{\lambda}_{\{1,...,j\}}\mathcal{S}^{\llbracket j\rrbracket}_\emptyset(f)(a+\textstyle\sum_{i=1}^{j+1}r_i\lambda_i)d\sigma^{j+1}(\lambda)+\\
        &+r_{\{1,...,m+1\}}\int_{\left(\mathbb{S}^3\right)^{m+1}}\overline{\lambda}_{\{1,...,m+1\}}\mathcal{S}^{\llbracket m+1\rrbracket}_\emptyset(f)(a+\textstyle\sum_{i=1}^{m+1}r_i\lambda_i)d\sigma^{m+1}(\lambda).
    \end{split}
\end{equation*}}
\end{proof}


In the rest of the section we mimic what has been done so far, but with the Poisson kernel: first we find Poisson formulas for the components $\mathcal{S}^{\llbracket m\rrbracket}_K(f)$ and finally two types of formulas for $f$.

\begin{prop}
\label{Poisson formula components}
Let $x_1,...,x_m\in\mathbb{B}_\mathbb{H}$, then it holds
\begin{equation}
\label{Equazione formula di Poisson per componenti}
    \begin{split}
        \mathcal{S}^{\llbracket m\rrbracket}_K(f)(a+\textstyle\sum_{i=1}^mr_ix_i)=\displaystyle\int_{(\mathbb{S}^3)^m}\mathcal{S}^{\llbracket m\rrbracket}_K(f)(a+\textstyle\sum_{i=1}^mr_i\xi_i)\prod_{j=1}^mP(x_j,\xi_j)d\sigma^m(\xi),
    \end{split}
\end{equation}
where $P(x,\xi):=\dfrac{1-|x|^2}{|x-\xi|^4}$ is the Poisson kernel of $\mathbb{B}\subset\mathbb{R}^4$.
\end{prop}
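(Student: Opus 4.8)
The plan is to exploit the separate harmonicity of the component $\mathcal{S}^m_K(f)$ in each of the variables $x_1,\dots,x_m$ and to reduce the statement to $m$ successive applications of the classical Poisson integral formula on the quaternionic unit ball $\mathbb{B}_\mathbb{H}\cong\mathbb{B}\subset\mathbb{R}^4$, one variable at a time. By point (3) of Theorem \ref{Teorema principale}, since $f\in\mathcal{S}\mathcal{R}(\Omega_D)$, we have $\Delta_h\mathcal{S}^m_K(f)=0$ for every $h\in\{1,\dots,m\}$, so $\mathcal{S}^m_K(f)$ is harmonic with respect to each of the first $m$ quaternionic variables; moreover, as $f$ is slice regular (hence $\mathcal{S}^1$), the component is defined and smooth on all of $\Omega_D$, so no issue arises on $\mathbb{R}_{\llbracket1,m\rrbracket}$.

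First I would fix an index $h$ together with values of the remaining variables and consider the map $u\colon\mathbb{B}_\mathbb{H}\to\mathbb{H}$ obtained from $\mathcal{S}^m_K(f)$ by the affine substitution $x\mapsto a_h+r_h x$ in the $h$-th slot. Since $x\mapsto a_h+r_h x$ is affine, $u$ is harmonic on $\mathbb{B}_\mathbb{H}$, and the standing assumption $\overline{B_{r_1}(a_1)}\times\dots\times\overline{B_{r_n}(a_n)}\subset\Omega_D$ guarantees that $u$ is harmonic across $\overline{\mathbb{B}_\mathbb{H}}$, so the scalar Poisson formula applies. Under the normalization $\sigma(\mathbb{S}^3)=1$, this affine change of variables carries the Poisson kernel of the ball $B_{r_h}(a_h)$ into the standard unit-ball kernel $P(x,\xi)=(1-|x|^2)/|x-\xi|^4$, yielding
\begin{equation*}
    \mathcal{S}^m_K(f)\bigl(a+\textstyle\sum_{i}r_i x_i\bigr)=\int_{\mathbb{S}^3}\mathcal{S}^m_K(f)\bigl(a+\textstyle\sum_{i\neq h}r_i x_i+r_h\xi_h\bigr)P(x_h,\xi_h)\,d\sigma(\xi_h).
\end{equation*}

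Next I would iterate this identity over $h=1,\dots,m$. The key point is that harmonicity in the variable $x_h$ is \emph{independent} of the values assigned to the other variables, so after replacing $x_1,\dots,x_{h-1}$ by boundary points $\xi_1,\dots,\xi_{h-1}\in\mathbb{S}^3$ the resulting function is still harmonic in $x_h$ on $\mathbb{B}_\mathbb{H}$ and the displayed formula may be applied again. Organizing this as an induction on the number $k$ of variables already integrated, one proves
\begin{equation*}
    \mathcal{S}^m_K(f)\bigl(a+\textstyle\sum_{i}r_i x_i\bigr)=\int_{(\mathbb{S}^3)^{k}}\mathcal{S}^m_K(f)\bigl(a+\textstyle\sum_{j\le k}r_j\xi_j+\sum_{i>k}r_i x_i\bigr)\prod_{j=1}^{k}P(x_j,\xi_j)\,d\sigma^{k}(\xi),
\end{equation*}
with the base case $k=0$ trivial and the inductive step a single application of the scalar Poisson formula in the variable $x_{k+1}$; at $k=m$ this is exactly the claim.

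Finally, the passage from the iterated integrals to the single integral over $(\mathbb{S}^3)^m$ carrying the product kernel $\prod_{j=1}^m P(x_j,\xi_j)$ is justified by Fubini's theorem, which applies without difficulty since the integrand is continuous on the compact set $(\mathbb{S}^3)^m$, hence bounded and integrable. I expect no serious obstacle here: the only points requiring care are the affine normalization that turns each ball-Poisson kernel into the unit-ball kernel $P$, and the observation that separate harmonicity is preserved when the already-integrated variables are frozen at boundary values, both of which follow at once from Theorem \ref{Teorema principale} (3) and the standing domain hypothesis.
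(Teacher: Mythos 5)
Your proposal is correct and follows essentially the same route as the paper: both deduce separate harmonicity of $\mathcal{S}^m_K(f)$ in $x_1,\dots,x_m$ from the ordered Almansi decomposition (the paper cites Corollary \ref{Corollario Almansi ordinato}, you cite Theorem \ref{Teorema principale}~(3), which is the same fact) and then apply the classical scalar Poisson formula one variable at a time. Your write-up merely makes explicit the details the paper leaves implicit — the affine rescaling $x\mapsto a_h+r_hx$ normalizing the kernel, the induction over already-integrated variables, and the Fubini step — all of which are sound under the standing domain assumption.
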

\begin{proof}
By Corollary \ref{Corollario Almansi ordinato}, every $\mathcal{S}^{\llbracket m\rrbracket}_K(f)$ is harmonic w.r.t. $x_1,...,x_m$, so by Poisson integral formula for harmonic functions it holds for any $k=1,...,m$
\begin{equation*}
        \mathcal{S}^{\llbracket m\rrbracket}_K(f)(a+\textstyle\sum_{i=1}^mr_ix_i)=\displaystyle\int_{\mathbb{S}^3}\mathcal{S}^{\llbracket m\rrbracket}_K(f)(a+\textstyle\sum_{i\neq k}r_ix_i+r_k\xi_k)P(x_k,\xi_k)d\sigma(\xi_k).
\end{equation*}
Thus, (\ref{Equazione formula di Poisson per componenti}) follows by applying the previous formula for $k=1,...,m$.
\end{proof}
\begin{prop}[First Poisson formula]
Let $m=1,...,n$ and $x_1,...,x_m\in\mathbb{B}_\mathbb{H}$, then it holds
\begin{equation*}
f(a+\textstyle\sum_{i=1}^mr_ix_i)=\displaystyle\sum_{K\in\mathcal{P}(m)}(-1)^{|K^c|}\left(\overline{a}+r\overline{x}\right)_{K^c}\int_{(\mathbb{S}^3)^m}\mathcal{S}^{\llbracket m\rrbracket}_K(f)(a+\textstyle\sum_{i=1}^mr_i\xi_i)\prod_{j=1}^mP(x_j,\xi_j)d\sigma^m(\xi).
\end{equation*}
\end{prop}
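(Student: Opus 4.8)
The plan is to assemble the statement from two results already in hand, exactly mirroring how the First mean value formula (Proposition~\ref{prop mean value formula k}) was deduced from the mean value formula for components: here the ordered Almansi decomposition of Corollary~\ref{Corollario Almansi ordinato} plays the role of the decomposition, and the componentwise Poisson formula of Proposition~\ref{Poisson formula components} replaces the componentwise mean value formula.

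First I would evaluate the ordered decomposition (\ref{Formula decomposizione di Almansi ordinata}) at the point $a+\sum_{i=1}^m r_i x_i$, which by the notation fixed at the start of the section is the point whose $k$-th coordinate equals $a_k+r_kx_k$ for $k\le m$ and $a_k$ otherwise. Since for these ordered components the slice product coincides with the pointwise product (the remark following Corollary~\ref{Corollario Almansi ordinato}), this gives
\begin{equation*}
    f\Bigl(a+\textstyle\sum_{i=1}^m r_i x_i\Bigr)=\sum_{K\in\mathcal{P}(m)}(-1)^{|K^c|}\Bigl[\bigl(\overline{x}\bigr)_{K^c}\mathcal{S}^m_K(f)\Bigr]\Bigl(a+\textstyle\sum_{i=1}^m r_i x_i\Bigr).
\end{equation*}
The only bookkeeping step is to evaluate the coefficient $(\overline{x})_{K^c}$ at this point: conjugating each shifted coordinate yields $\overline{a_k+r_kx_k}=\overline{a}_k+r_k\overline{x}_k$ (as $r_k\in\mathbb{R}$), so the ordered product over $k\in K^c$ collapses precisely to $(\overline{a}+r\overline{x})_{K^c}$, matching the coefficient appearing in the statement.

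Finally I would substitute the Poisson representation (\ref{Equazione formula di Poisson per componenti}) for each factor $\mathcal{S}^m_K(f)(a+\sum_{i=1}^m r_i x_i)$. This is legitimate because $x_1,\dots,x_m\in\mathbb{B}_\mathbb{H}$ and each $\mathcal{S}^m_K(f)$ is separately harmonic in $x_1,\dots,x_m$ by Corollary~\ref{Corollario Almansi ordinato}, which is exactly the hypothesis of Proposition~\ref{Poisson formula components}. Since the coefficients $(\overline{a}+r\overline{x})_{K^c}$ do not depend on the integration variable $\xi$, they may be pulled outside (equivalently, left in front of) the integral, producing the claimed identity term by term. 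I expect no genuine obstacle here: the entire mathematical content is already carried by the earlier results, and what remains is the one-line assembly together with the elementary coordinate computation for the coefficient described above.
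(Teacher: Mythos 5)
Your proposal is correct and follows essentially the same route as the paper's proof, which likewise just combines the ordered decomposition (\ref{Formula decomposizione di Almansi ordinata}) evaluated at $a+\sum_{i=1}^m r_i x_i$ with the componentwise Poisson formula (\ref{Equazione formula di Poisson per componenti}). Your explicit verification that the coefficient evaluates to $(\overline{a}+r\overline{x})_{K^c}$ is a detail the paper leaves implicit, but it is the same one-line assembly.
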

\begin{proof}
Apply (\ref{Formula decomposizione di Almansi ordinata}) and  (\ref{Equazione formula di Poisson per componenti}) to every $\mathcal{S}^{\llbracket m\rrbracket}_K(f)$, to get
\begin{equation*}
\begin{split}
    f(a+\textstyle\sum_{i=1}^mr_ix_i)&=\displaystyle\sum_{K\in\mathcal{P}(m)}(-1)^{|K^c|}\left(\overline{a}+r\overline{x}\right)_{K^c}\mathcal{S}^{\llbracket m\rrbracket}_K(a+\textstyle\sum_{i=1}^mr_ix_i)\\
    &=\displaystyle\sum_{K\in\mathcal{P}(m)}(-1)^{|K^c|}\left(\overline{a}+r\overline{x}\right)_{K^c}\int_{(\mathbb{S}^3)^m}\mathcal{S}^{\llbracket m\rrbracket}_K(f)(a+\textstyle\sum_{i=1}^mr_i\xi_i)\prod_{j=1}^mP(x_j,\xi_j)d\sigma^m(\xi).
    \end{split}
\end{equation*}
\end{proof}

\begin{prop}[Second Poisson formula]
Let $m=1,...,n$ and $x_1,...,x_m\in\mathbb{B}_\mathbb{H}$, then it holds
\begin{equation*}
    \begin{split}
        f(a+\textstyle\sum_{i=1}^mr_ix_i)&=\displaystyle\sum_{j=1}^{m-1}r_{\{1,...,j\}}\int_{(\mathbb{S}^3)^{j+1}}(\overline{\xi}-\overline{x})_{\{1,...,j\}}(\mathcal{S}^{\llbracket j\rrbracket}_{\emptyset}(f))(a+\textstyle\sum_{i=1}^{j+1}r_i\xi_i)\prod_{t=1}^{j+1}P(x_t,\xi_t)d\sigma^{j+1}(\xi)+\\
        &+r_{\{1,...,m\}}\int_{(\mathbb{S}^3)^m}(\overline{\xi}-\overline{x})_{\{1,...,m\}}\mathcal{S}^{\llbracket m\rrbracket}_\emptyset(f)(a+\textstyle\sum_{i=1}^{m}r_i\xi_i)\prod_{t=1}^{m}P(x_t,\xi_t)d\sigma^{m}(\xi).
    \end{split}
\end{equation*}
\end{prop}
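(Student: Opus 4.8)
The plan is to imitate the proof of Proposition~\ref{prop mean value formula vuoto}, replacing the uniform average $d\sigma$ by the Poisson average $P(x,\xi)\,d\sigma(\xi)$ and the factor $\overline{\lambda}$ by $(\overline{\xi}-\overline{x})$. The single new ingredient is the one-variable Poisson companion of \cite[Proposition 2]{Almansi}: for a one-variable slice regular $g$, a center $c$, a radius $r$ and $x\in\mathbb{B}_\mathbb{H}$,
\begin{equation*}
    g(c+rx)=\int_{\mathbb{S}^3}g(c+r\xi)P(x,\xi)\,d\sigma(\xi)+r\int_{\mathbb{S}^3}(\overline{\xi}-\overline{x})\,g'_s(c+r\xi)\,P(x,\xi)\,d\sigma(\xi).
\end{equation*}
I would obtain it from the one-variable decomposition $g=h_1-\overline{w}\,h_2$, where $w$ denotes the variable and $h_1=(wg)'_s$, $h_2=g'_s$ are circular and harmonic: the classical Poisson formula reproduces the harmonic functions $h_1$ and $h_2$ at the interior point $w=c+rx$, and after substituting $h_1=g+\overline{w}\,h_2$ inside the integral and extracting the constant $\overline{c+rx}$ from the $h_2$-term, the two boundary integrals combine through $(\overline{c}+r\overline{\xi})-(\overline{c}+r\overline{x})=r(\overline{\xi}-\overline{x})$ into the single correction displayed above.

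With this in hand I would induct on $m$, exactly as in Proposition~\ref{prop mean value formula vuoto}. The base case $m=1$ is the displayed identity applied to $f$ in the first variable, using $\mathcal{S}^1_\emptyset(f)=f'_{s,1}$. For the inductive step I would apply the level-$m$ identity to $f$ with the $(m+1)$-th variable frozen at its interior value $a_{m+1}+r_{m+1}x_{m+1}$ (legitimate, since that variable is untouched by the level-$m$ formula); every summand of index $j\le m-1$ then already has the shape required at level $m+1$. It remains to refine the top summand, the one carrying $\mathcal{S}^m_\emptyset(f)$. Since $\mathcal{S}^m_\emptyset(f)\in\mathcal{S}\mathcal{R}_{m+1}(\Omega_D)$ by Proposition~\ref{Proposizione caratterizzazione slice regolarita con componenti ordinate}, the one-variable Poisson identity applies to it in the $(m+1)$-th variable, and $\left(\mathcal{S}^m_\emptyset(f)\right)'_{s,m+1}=\mathcal{S}^{m+1}_\emptyset(f)$. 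Splitting the top summand this way and using Fubini over $(\mathbb{S}^3)^m\times\mathbb{S}^3$ produces two pieces: the value part becomes the new $j=m$ summand (same component, one extra kernel $P(x_{m+1},\xi_{m+1})$ integrated over one more sphere), while the derivative part becomes the new top summand, with $r_{\{1,\dots,m\}}$ promoted to $r_{\{1,\dots,m+1\}}$ and the ordered factor $(\overline{\xi}-\overline{x})_{\{1,\dots,m\}}$ extended on the right to $(\overline{\xi}-\overline{x})_{\{1,\dots,m+1\}}$.

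The index bookkeeping is identical to the mean value case and should be routine. The point requiring the most care is the one-variable Poisson identity itself, together with the non-commutativity: the correction factor $(\overline{\xi}-\overline{x})$ must be kept to the left of the component and in the correct order, so that at each stage the new factor $(\overline{\xi}_{m+1}-\overline{x}_{m+1})$ attaches on the right and the ordered products $(\overline{\xi}-\overline{x})_{\{1,\dots,j\}}$ build up consistently. Verifying the one-variable identity, where the Poisson kernel (unlike the spherical average) does not annihilate the spherical-value contribution and one must check that it recombines exactly into the $(\overline{\xi}-\overline{x})$ correction, is the step I would carry out most explicitly; the multivariable statement then follows from it essentially formally, by the same iteration already used for $d\sigma$.
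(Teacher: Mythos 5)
Your proposal is correct and takes essentially the same route as the paper, whose proof consists precisely of rerunning the induction of Proposition \ref{prop mean value formula vuoto} with the one-variable Poisson formula \cite[Proposition 3]{Almansi} in place of the mean value formula \cite[Proposition 2]{Almansi}; the one-variable identity you rederive from the decomposition $g=h_1-\overline{w}h_2$ is exactly that cited result. Your only addition is making explicit two details the paper leaves implicit, namely the derivation of the one-variable Poisson identity itself and the device of freezing the $(m+1)$-th variable at its interior value $a_{m+1}+r_{m+1}x_{m+1}$ so that the left-hand sides match across induction levels, both of which are handled correctly.
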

\begin{proof}
The proof is analogue of the one of Proposition \ref{prop mean value formula vuoto}, but here apply \cite[Proposition 3]{Almansi}.
\end{proof}

\printbibliography
\end{document}